\documentclass{article}
\usepackage[margin=1in]{geometry}
\usepackage{amsthm}
\newtheorem{theorem}{Theorem}
\newtheorem{definition}{Definition}
\newtheorem{lemma}{Lemma}

\usepackage{graphicx}
\usepackage{lipsum}
\usepackage{amsfonts}
\usepackage{epstopdf}
\usepackage{algorithmic}
\ifpdf
  \DeclareGraphicsExtensions{.eps,.pdf,.png,.jpg}
\else
  \DeclareGraphicsExtensions{.eps}
\fi


\usepackage{amsopn}

\usepackage[american]{babel}
\usepackage{tikz}
\usepackage{tikzscale}
\usepackage{subfig,tabu}
\usetikzlibrary{external}
\tikzexternalize[prefix=images/]

\usepackage{pgfplots}

\usepackage{amssymb, palatino, geometry}
\usepackage{amsfonts, amsmath, latexsym}
\usepackage{epsfig,graphicx,xcolor}
\usepackage[algo2e,linesnumbered,vlined,lined,boxed]{algorithm2e} 
\usepackage{url}                
\usepackage{version}            
\usepackage[titletoc,title]{appendix}

\usepackage[normalem]{ulem} 

\DeclareGraphicsRule{.pdftex}{pdf}{.pdftex}{}
\graphicspath{{../../figures/}{../images/}}        




\newtheorem{assumption}{Assumption}

\newcommand{\Z}{\mathbb Z}           
\newcommand{\R}{\mathbb R}           

\newcommand{\mini}{\mathop{\mbox{minimize}}}

\newcommand{\st}{\mbox{subject to }}
\newcommand{\bmath}[1]{\mbox{\boldmath $ #1 $}}
\newcommand{\be}{\begin{equation}}
\newcommand{\ee}{\end{equation}}
\newcommand{\bea}{\begin{eqnarray}}
\newcommand{\eea}{\end{eqnarray}}
\newcommand{\dps}{\displaystyle}

\newcommand{\bvec}{\left(\begin{array}{c}}
\newcommand{\evec}{\end{array}\right)}
\newcommand{\bsub}{\begin{subequations}}
\newcommand{\esub}{\end{subequations}}


\newcommand{\defined}{=}
\newcommand{\T}{{\operatorname{T}}}
\newcommand{\bi}{\bmath{i}}
\newcommand{\cU}{\mathcal{U}}

\definecolor{darkgreen}{rgb}{0,0.5,0}



\newcommand{\conep}[1]{\mathrm{cone}\big(#1\big)}

\newcommand{\convp}[1]{\mathrm{conv}\left(#1\right)} 
\newcommand{\intp}[1]{\mathrm{int}\left( #1 \right)}

\DeclareMathOperator*{\argmin}{arg\,min}
\newcommand{\thmref}[1]{Theorem~\ref{thm:#1}}
\newcommand{\figref}[1]{Figure~\ref{fig:#1}}

\newcommand{\lemref}[1]{{Lemma~\ref{lem:#1}}}
\newcommand{\defref}[1]{{\rm Definition~\ref{def:#1}}}
\newcommand{\asref}[1]{{\rm Assumption~\ref{ass:#1}}}

\newcommand{\algref}[1]{{\rm Algorithm~\ref{alg:#1}}}
\newcommand{\secref}[1]{Section~\ref{sec:#1}}
\newcommand{\appref}[1]{Appendix~\ref{app:#1}}
\newcommand{\propref}[1]{Proposition~\ref{prop:#1}}

\newcommand{\linerefa}[1]{Line~\ref{line:#1}} 

\newcommand{\tabref}[1]{{\rm Table~\ref{table:#1}}}
\newcommand{\tabrefs}[2]{Tables~\ref{table:#1}--\ref{table:#2}}
\newcommand{\biminusj}{\bmath{i}_{i_j-}}

\newcommand{\ds}{\displaystyle}
\newcommand{\floor}[1] {\lfloor #1 \rfloor}

\newcommand{\code}[1]{{\rm \textsf{#1}}}
\newcommand{\prob}[1]{{\rm \texttt{#1}}}
\allowdisplaybreaks

\usepackage{placeins}

\newtheorem{problem}{Problem}
\newtheorem{proposition}{Proposition}[section]

%
\begin{document}

\title{A Method for Convex Black-Box Integer Global Optimization}


\author{Jeffrey Larson \and Sven Leyffer \and Prashant Palkar \and \mbox{Stefan M.~Wild}}



\maketitle

\begin{abstract}
  We study the problem of minimizing a convex function on a nonempty, finite subset of the integer lattice
  when the function cannot be evaluated at noninteger points. 
  We propose a new underestimator that does not require access to
  (sub)gradients of the objective but, rather, uses secant linear functions that interpolate the
  objective function at previously evaluated points. These linear mappings are shown to
  underestimate the objective in disconnected portions of the domain. Therefore, the union of these
  conditional cuts provides a nonconvex underestimator of the objective. 
  We propose an algorithm that alternates between updating the underestimator
  and evaluating the objective function.
  We prove that the algorithm converges to a global minimum of the objective 
  function on the
  feasible set. We present two
  approaches for representing the underestimator and compare their
  computational effectiveness. We also compare implementations of our algorithm
  with existing methods for minimizing functions on a subset of the
  integer lattice. We discuss the 
  difficulty of this problem class
  and provide insights into why a computational proof of optimality is
  challenging even for moderate problem sizes.
\end{abstract}

\section{Introduction}\label{S:intro}
We study the problem of minimizing a convex function on a finite subset of the integer lattice.
In particular, we consider problems of the form
\begin{equation}\label{eq:MIP-DFO} 
  \mini_x \; f(x) \quad \st \; x \in \Omega \subset \Z^{n}.
\end{equation}
We first define what it means for $f$ to be convex on a set $\Omega \subset \Z^n$.
\begin{definition}[Convexity on Integer Subsets]\label{def:convexity}
  A function $f$ is \emph{convex on $\Omega$} if for $x \in \Omega$ and
  any $p$ points $y^i \in \Omega$ satisfying
  $x = \sum_{i=1}^{p} \lambda_i y^i$
  with $\lambda_i \in [0,1]$ and $\sum_{i=1}^{p}\lambda_i = 1$, then
  $f(x) \le \sum_{i=1}^{p} \lambda_i f(y^i)$.
\end{definition}
We make the following assumption about problem \eqref{eq:MIP-DFO}.
\begin{assumption}\label{ass:1}
$f$ is convex on $\Omega$,
  $\Omega \subset \Z^n$ is nonempty and bounded,
  and $f$ cannot be evaluated at $x \notin \Omega$.
\end{assumption}

Because we assume that $f$ cannot be evaluated at noninteger points, 
problem \eqref{eq:MIP-DFO} can be referred to as a 
convex 
optimization problem with {\em unrelaxable integer
constraints} \cite{taxonomy15}. 
We note that 
$\Omega$ need not contain all integer points in its 
convex hull (i.e., our approach allows for situations where $\convp{\Omega} 
\cap \Z^n \not = \Omega$).
Problem 
\eqref{eq:MIP-DFO} can also be viewed as minimizing an
\emph{integer-convex}\footnote{Although \cite{Hemmecke10} considers
integer convexity only for polynomials, this definition can be 
applied to more general classes of functions over the sets considered here.}
function \cite[Definition 15.2]{Hemmecke10} over a nonempty finite subset of
$\Z^n$.

Admittedly, it is rare to know that $f$ is convex when $f$
is not given in closed form (although one may be able to detect convexity
\cite{NanjingJian2014} or estimate a probability that $f$ is convex on
the finite domain $\Omega$~\cite{Jian2019}). Nevertheless, 
studying the convex
case is
important because we are unaware of any method (besides complete enumeration) for obtaining exact solutions
to \eqref{eq:MIP-DFO} when $f$ (convex or otherwise) cannot be evaluated at
noninteger points. 

One example where an
objective is not given in closed form but is known to be convex arises in the combinatorial optimal control of
partial differential equations (PDEs).
For example, Buchheim et al.~\cite[Lemma 2]{Buchheim2018} show
that the solution operator of certain semilinear elliptic PDEs is a convex function of the controls
provided that the nonlinearities in the PDE and boundary conditions are concave and
nondecreasing. Thus, any linear function of the states
of the PDE (e.g., the $\max$-function) is a convex function
of the controls when the states are eliminated. The authors of \cite{Buchheim2018}
propose using adjoint information to compute subgradients of the 
(continuous relaxation of the) objective, but an alternative
would be to consider a derivative-free approach.

We consider only pure-integer 
problems of the form \eqref{eq:MIP-DFO}; however, our developments are equally applicable to
the mixed-integer case
\begin{equation}\label{eq:MixedIP-DFO} 
  \mini_{x,y} \; F(x,y) \quad \st \; (x,y) \in \Omega \times \Psi \subset \Z^{n} \times \R^m
\end{equation}
provided $F$ is convex on  $\Omega \times \Psi$. 
If we define the function 
\[ f(x) \defined \min_{y \in \Psi} F(x,y) \]
and if $f$ is well defined over $\Omega$,
then \eqref{eq:MixedIP-DFO} can be solved by minimizing $f$ on $\Omega \subset \Z^n$,
where each evaluation $f(x)$ requires an optimization of the continuous
variables $y$ for a fixed $x$. (When there is no additional
information on $F$ and $\Psi$, each continuous optimization problem 
may be difficult to solve.) Because many of the results below rely only on the convexity of
$f$ and not the discrete nature of $\Omega$, much of the analysis below readily
applies to the mixed-integer case. 

We are especially interested in problems where the cost of evaluating $f$ is 
large. 
Problems of the form \eqref{eq:MIP-DFO} or \eqref{eq:MixedIP-DFO} where the
objective is expensive to evaluate and some integer constraints are unrelaxable arise in a range of
simulation-based optimization problems. For example, the optimal design of
concentrating solar power plants gives rise to computationally expensive simulations for 
each set of design parameters \cite{cspopt11}. Furthermore, 
some of the design parameters (e.g., the number of panels on the power plant
receiver) cannot be relaxed to noninteger values. 
Similar problems arise when tuning codes to run on 
high-performance computers~\cite{Multiobj13}. In this case, $f(x)$ may be the
memory footprint of a code that is compiled with settings $x$, which 
can correspond to decisions such as loop unrolling or tiling factors 
that do not have meaningful noninteger values. Optimal material design problems
may also constrain the choice of atoms to a finite set, resulting in unrelaxable integer constraints;
see~\cite{Graf2017} for a derivative-free optimization 
algorithm designed explicitly for such a
problem. 

Motivated by such applications, we develop a method that will
certifiably converge to the solution of \eqref{eq:MIP-DFO} under \asref{1}
without access to $\partial f$. Using only evaluations of $f$, we construct
\emph{secants}, which are linear functions that interpolate $f$ at a set of
$n+1$ points. These secant functions underestimate $f$ in certain parts of
$\Omega$. We use these
secants to define \emph{conditional cuts} that are
valid in disconnected portions of the domain. The complete set of secants 
and the conditions that describe when they are valid are used to
construct an underestimator of a convex $f$.
While access to $\partial f$ (i.e., a (sub)gradient of a continuous relaxation of $f$) 
could strengthen such an underestimator, we do not
address such considerations in this paper. 

Solving \eqref{eq:MIP-DFO} under \asref{1} without access to
$\partial f$ poses a number of theoretical
and computational challenges. Because the integer constraints are
unrelaxable, one cannot apply traditional branch-and-bound approaches. In
particular, model-based continuous derivative-free methods would require evaluating
the objective at noninteger points to ensure convergence for the 
continuous relaxation of \eqref{eq:MIP-DFO}. 
In addition, other
traditional techniques for mixed-integer optimization---such as Benders decomposition
\cite{Geoffrion1972} or outer approximation
\cite{Duran1986,Fletcher1994}---cannot be used to solve
\eqref{eq:MIP-DFO} when $\partial f$ is unavailable.
Since we know of no method (other than complete enumeration) for obtaining
global minimizers of \eqref{eq:MIP-DFO} under \asref{1}, we know of no
potential algorithm to address this problem when a (sub)gradient is
unavailable. 

We make three contributions in this paper: (1) we develop a new
underestimator for convex functions on subsets of the integer lattice that is based
solely on function evaluations; (2)
we present an algorithm that alternates between updating 
this underestimator and evaluating the objective in order to identify a global
solution of problem \eqref{eq:MIP-DFO} under \asref{1}; and
(3) we show empirically that
certifying global optimality 
in such cases 
is a challenging
problem. In our experiments, we are unable to prove optimality for many problems 
when $n\geq 5$, and we provide insights into why a proof of
optimality remains computationally challenging.

\paragraph{Outline.}
\secref{background} surveys recent methods for addressing \eqref{eq:MIP-DFO}. 
\secref{cuts} introduces
valid conditional cuts using only the function values of a convex objective
and discusses the theoretical properties of these cuts.
\secref{algm} presents an algorithm for solving \eqref{eq:MIP-DFO}
and shows that this algorithm identifies a global minimizer of \eqref{eq:MIP-DFO} under \asref{1}.
\secref{implementing} considers two approaches for formulating the underestimator
and presents the method \code{SUCIL}---secant 
underestimator of convex functions on the integer lattice. 
\secref{numerics} provides detailed numerical studies for implementations of \code{SUCIL} on a set of convex problems.
\secref{chall} discusses many of the challenges in obtaining global solutions to \eqref{eq:MIP-DFO}.

\section{Background}\label{sec:background}

Developing methods to solve \eqref{eq:MIP-DFO} without access to derivatives of
$f$ is an active area of research.
Most methods address general (i.e., nonconvex) functions $f$, and
heuristic approaches are commonly adopted to handle integer decision
variables for such derivative-free optimization problems. 
For example, the method in \cite{Mueller2013a} 
rounds noninteger components of candidate points to the nearest
feasible integer values. The method's asymptotic convergence results are based on the
inclusion of points drawn uniformly from the finite domain (and rounding noninteger values
as necessary). 

Integer-constrained pattern-search
methods~\cite{Abramson2009,Audet2001} generalize their
continuous counterparts.
These modified pattern-search methods can be shown to converge to 
{\em mesh-isolated minimizers}: points with function values that are better than all 
neighboring points on the integer lattice. Unfortunately, such mesh-isolated
minimizers can be arbitrarily far from a global minimizer, even
when $f$ is convex; see \cite[Fig.~2]{Abhishek2010} for an example of such a function $f$. 
Other methods that converge to mesh-isolated minimizers include
direct-search methods that update the integer variables via a 
local search \cite{Liuzzi2011,Liuzzi2015} 
and mesh adaptive direct-search methods adapted
to address discrete and granular variables (i.e., those that have a controlled number
of decimals)~\cite{Abramson2014,AlDT2018}.
The direct-search method in \cite{Garcia-Palomares2019} accounts for integer 
constraints
by constructing a set of directions that have a nonnegative span
of $\R^n$ and that ensure that all evaluated points will be integer valued. This
method is shown to converge to a type of stationary point
that, even in the convex case, may not be a global minimizer.
See \cite{Newby2015} for various definitions of local minimizers of
\eqref{eq:MIP-DFO} and a discussion of associated properties.
The 
BFO method in \cite{Porcelli2017} has a recursive
step that explores points near the current iterate by fixing each of the
discrete variables to its value plus or minus a step-size parameter.

\begin{table}[ht]
  \begin{minipage}[b]{0.34\linewidth}
    \centering
    \includegraphics[width=\linewidth]{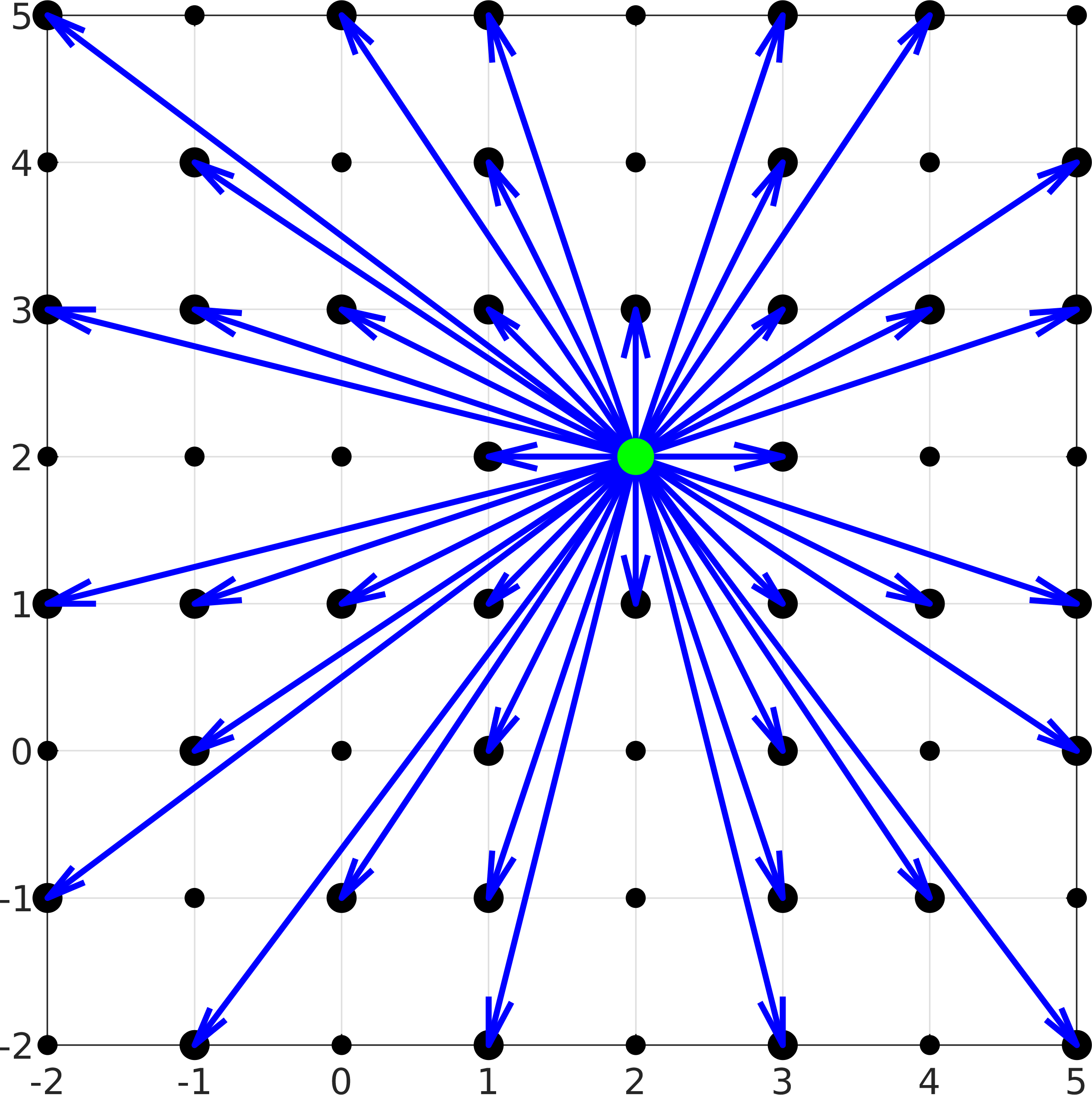}
    \captionof{figure}{Primitive directions emanating from $(2,2)$ in the domain $\Omega = [-2,5]^2 \cap 
    \Z^2$.  \label{fig:PrimDirs}}
  \end{minipage}
  \hfill
  \begin{minipage}[b]{0.61\linewidth}%
    \small
    \centering
  \begin{tabular}{|c||r|r|r|r|r|r|r|r|r|r|r|c|}
  \hline
  & \multicolumn{2}{c|}{$n=2$} &
  \multicolumn{ 2}{c|}{$n=3$} & \multicolumn{ 2}{c|}{$n=4$} & \multicolumn{ 2}{c|}{$n=5$}\\ \hline
  $k$ & $\lvert \Omega \rvert$ &
  $\#$ & $\lvert \Omega \rvert$ &
  $\#$ & $\lvert \Omega \rvert$ &
  $\#$ & $\lvert \Omega \rvert$ &
  $\#$\\ \hline \hline
  1  & 9   & 8  & 27   & 26   & 81    & 80     & 243    & 242 \\ \hline
  2  & 25  & 16 & 125  & 98   & 625   & 544    & 3,125   & 2,882 \\ \hline
  3  & 49  & 32 & 343  & 290  & 2,403  & 2,240   & 16,807  & 16,322 \\ \hline
  4  & 81  & 48 & 729  & 578  & 6,561  & 5,856   & 59,049  & 55,682 \\ \hline
  \end{tabular}
    \caption{Number of primitive directions, $\# = \lvert \mathcal{N}(x_c,1)
    \rvert$, that emanate from the origin $x_c$ of the domain ${\Omega=\left[
    -k,k \right]^n \cap \Z^n}$ and that correspond to points in
    $\Omega$.\label{table:directions}}
  \end{minipage}
\end{table}

The method 
in \cite{Liuzzi2018} 
uses line searches over a set of {\em primitive directions}, that is,  
a set of scaled
directions $D$ where no vector $d_i \in D$ is a positive 
multiple of a different $d_j \in D$. 
This method explores a discrete set of
directions around the current iterate until finding a
local minimum $x_c$ in a $\beta$-neighborhood, defined as
$\mathcal{N}(x_c,\beta) \defined \{ x_c + \alpha d \in \Omega: d\in D, 
\alpha \in \mathbb{N}, \alpha \le \beta\}$
for $\beta \in \mathbb{N}$.
Although the authors of \cite{Liuzzi2018} target nonconvex objectives, their approach
will converge to a global minimum $x_c$ of a convex objective $f$ 
if all points in $\mathcal{N}(x_c,1)$ are evaluated. 
\figref{PrimDirs} illustrates such a discrete 1-neighborhood.
Unfortunately, $|\mathcal{N}(x_c,1)|$
can be large; see \tabref{directions}.

Model-based methods approximate objective functions on the integer lattice
by using surrogate models;
see,
for example, \cite{HQE08}, \cite{Rashid12}, and \cite{costa2014rbfopt}. The 
surrogate model
is used to determine points where the objective should be evaluated; the
model is typically refined after each objective evaluation. The methodology in 
\cite{costa2014rbfopt}
specifically uses radial basis function surrogate models and does automatic
model selection at each iteration. 
Mixed-integer nonlinear optimization solvers are used to minimize the surrogate 
to obtain the next integer point for evaluation.
The model-based methods of
\cite{Mueller2014a,muller2016miso,Mueller2013}
modify the sampling strategies and local searches typically used to solve
continuous objective versions. The approaches in
\cite{Mueller2014a,muller2016miso} restart when a suitably 
defined local
minimizer is encountered, continuing to evaluate the objective until the
available budget of function evaluations is exhausted. These model-based
methods differ in the initial sampling method, the type of surrogate
model, and the sampling strategy used to select the next points to be evaluated. 
See \cite{BB2017}, for a survey and taxonomy of continuous and integer
model-based optimization approaches. 

In a different line of research, Davis and Ierapetritou~\cite{Davis:2009}
propose a branch-and-bound framework to address binary variables; a
solution to the relaxed nonlinear subproblems is obtained via a combination of 
global kriging models and local surrogate models. 
Similarly, Hemker et al.~\cite{Hemker08} replace the black-box portions of the 
objective function
(and constraints) by a stochastic surrogate; the resulting
mixed-integer nonlinear programs are solved by branch and bound. Both 
approaches assume that the integer constraints are relaxable.

\section{Underestimator of Convex Functions on the Integer Lattice}\label{sec:cuts}

To construct an underestimator of a convex objective function $f$, 
we now discuss secant functions, which are linear mappings that 
interpolate $f$ at $n+1$ points. We provide conditions for where 
these cuts will underestimate $f$. We then discuss necessary conditions on the set
of evaluated points so that if all possible secants are constructed, these
conditional cuts underestimate $f$ on the domain $\Omega$. 
As we will see in \secref{algm},
this underestimator is essential for obtaining a global minimizer of
\eqref{eq:MIP-DFO} under \asref{1}.
Throughout this section, $X \subseteq \Omega$ denotes a set of at least
$n+1$ points in $\Omega$ at which the objective function $f$ has been evaluated.

\subsection{Secant Functions and Conditional Cuts}

Constructing a secant function requires a set of $n+1$ interpolation
points where $f$ has been evaluated.
To define a secant function for $f$, we introduce a multi-index $\bi$ of
$n+1$ distinct indices, $1 \leq i_1 < \ldots < i_{n+1} \leq \left| X \right|$, as
$\bi \defined (i_1,\ldots,i_{n+1})$.
With a slight abuse of notation, we will refer to elements $i_j \in \bi$.

Given the set of points $X^{\bi} \defined \left\{ x^{i_j} \in X \subseteq \Omega : i_j \in \bi \right\}$, 
we construct the secant function 
\begin{equation*}
  m^{\bi}(x) \defined (c^{\bi})^\T x + b^{\bi},
\end{equation*}
where the coefficients $c^{\bi}\in \R^n$ and $b^{\bi}\in \R$ are the solution to the linear system
\begin{equation}\label{eq:interp}
    \left[ \bar{X}^{\bi} \; e \right]
   \left[ \begin{array}{l}
     c^{\bi} \\
     b^{\bi} 
   \end{array} \right]
   =
     f^{\bi},
\; \text{where} \;
\bar{X}^{\bi} \defined 
  \left[ 
    \begin{array}{c}
      (x^{i_1})^\T\\
      \vdots    \\
      (x^{i_{n+1}})^\T 
    \end{array} 
  \right], 
\;
e \defined 
  \left[
    \begin{array}{c}
      1\\
      \vdots \\
      1
    \end{array} 
  \right], \; \mbox{and} \;
f^{\bi} \defined
  \left[
    \begin{array}{c}
      f(x^{i_1}) \\
      \vdots \\
      f(x^{i_{n+1}})
    \end{array} 
  \right].
\end{equation}
The secant function $m^{\bi}$ is unique provided that the set
$X^{\bi}$ is \emph{poised}, which we now define.
\begin{definition}\label{def:poised}
  The set of points $X^{\bi}$ is \emph{poised} if the matrix $\left[ \bar{X}^{\bi} \; e  \right]$ is nonsingular.
\end{definition}
Note that \defref{poised} is equivalent to $X^{\bi}$ being affinely independent. 
We now show that the secant function $m^{\bi}$ underestimates $f$
in certain polyhedral cones, namely, the cones 
\begin{equation}\label{eq:U_def}
  \cU^{\bi} \defined \bigcup\limits_{i_j \in \bi} \conep{x^{i_j} - X^{\bi}}, 
\end{equation}
where
\begin{equation}\label{eq:cone_def}
  \conep{x^{i_j} - X^{\bi}} \defined 
  \{x^{i_j}+\sum_{l=1,l \neq j}^{n+1} \lambda_l (x^{i_j} - x^{i_l}) :\, 
  i_j \in \bi, i_l \in \bi, \lambda_l \geq 0 \}.
\end{equation}

\begin{lemma}[Conditional Cuts]\label{lem:cone}
  If $f$ is convex on $\Omega$ 
  and $X^{\bi}$ $\subseteq \Omega$ is poised, then the unique linear mapping
  $m^{\bi}$ satisfying $m^{\bi}(x^{i_j}) = f(x^{i_j})$ for each $i_j \in \bi$ 
  satisfies $m^{\bi}(x) \le f(x)$ for all $x \in \cU^{\bi} \bigcap \Omega$.
\end{lemma}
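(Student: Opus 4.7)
My plan is to fix an arbitrary $x \in \cU^{\bi} \cap \Omega$ and show $m^{\bi}(x) \le f(x)$. By the definition \eqref{eq:U_def}, such an $x$ lies in at least one of the cones $\conep{x^{i_j} - X^{\bi}}$, so I can fix such an index $j$ and write
\[
  x = x^{i_j} + \sum_{l \neq j} \lambda_l \bigl(x^{i_j} - x^{i_l}\bigr), \qquad \lambda_l \geq 0.
\]
The central observation is that this representation can be inverted to display $x^{i_j}$ itself as a convex combination involving $x$ and the remaining $n$ interpolation points. Setting $\mu \defined 1 + \sum_{l\neq j}\lambda_l \geq 1$ and collecting terms,
\[
  x^{i_j} = \frac{1}{\mu}\, x + \sum_{l\neq j} \frac{\lambda_l}{\mu}\, x^{i_l},
\]
and the $n+1$ coefficients $1/\mu$ and $\lambda_l/\mu$ are nonnegative and sum to $1$.

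Now I would invoke the integer-convexity hypothesis from \defref{convexity}. Since $x^{i_j} \in \Omega$, $x \in \Omega$, and each $x^{i_l} \in X \subseteq \Omega$, the definition applies directly and gives
\[
  f(x^{i_j}) \;\leq\; \frac{1}{\mu} f(x) + \sum_{l\neq j} \frac{\lambda_l}{\mu} f(x^{i_l}).
\]
Multiplying through by $\mu$ and rearranging yields
\[
  f(x) \;\geq\; f(x^{i_j}) + \sum_{l\neq j} \lambda_l \bigl(f(x^{i_j}) - f(x^{i_l})\bigr).
\]

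The last step is to identify the right-hand side with $m^{\bi}(x)$. Because $m^{\bi}$ is affine and interpolates $f$ on $X^{\bi}$, linearity gives
\[
  m^{\bi}(x) = m^{\bi}(x^{i_j}) + \sum_{l\neq j} \lambda_l (c^{\bi})^{\T}(x^{i_j} - x^{i_l}) = f(x^{i_j}) + \sum_{l\neq j} \lambda_l \bigl(f(x^{i_j}) - f(x^{i_l})\bigr),
\]
which matches the bound above, finishing the proof. The only potential pitfall is the bookkeeping at step two: one must verify that the coefficients really do form a valid convex combination (which works precisely because $\mu \geq 1 > 0$) and that every point fed into \defref{convexity} lies in $\Omega$, which is why the hypothesis $x \in \cU^{\bi}\cap \Omega$, rather than merely $x \in \cU^{\bi}$, is essential. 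Poisedness of $X^{\bi}$ is used only to ensure the $m^{\bi}$ referenced in the conclusion is well defined; it is not needed for the inequality itself.
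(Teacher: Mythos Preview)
Your proof is correct and follows essentially the same route as the paper's: rewrite $x^{i_j}$ as a convex combination of $x$ and the remaining interpolation points, apply \defref{convexity}, and then use affinity plus interpolation to identify the resulting bound with $m^{\bi}(x)$. The only cosmetic difference is your introduction of $\mu = 1 + \sum_{l\neq j}\lambda_l$ in place of the paper's explicit sums, and your closing remark on the role of poisedness mirrors the paper's opening sentence on uniqueness.
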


\begin{proof}
The uniqueness of the linear mapping follows directly from the affine 
independence guaranteed by \defref{poised} for poised $X^{\bi}$.

Let $x$ be a point in $\conep{x^{i_j}- X^{\bi}} \bigcap \Omega$ for arbitrary
  $x^{i_j} \in X^{\bi}$. By \eqref{eq:cone_def}, 
  \begin{equation}\label{eq:x_def}
    x = x^{i_j} + \sum_{l=1,l \neq j}^{n+1} \lambda_l \left(x^{i_j} - x^{i_l}\right),
  \end{equation}
  with $\lambda_l \ge 0$ (for $l=1,\ldots, n+1; l\neq j$). 
  Rearranging 
\eqref{eq:x_def}
  yields
  \begin{equation*}
    x^{i_j} = \frac{1}{1 + \sum_{k=1,k \neq j}^{n+1} \lambda_k} x + 
\frac{1}{1 + \sum_{k=1,k \neq j}^{n+1} 
\lambda_k} \sum_{l=1,l \neq j}^{n+1} \lambda_l x^{i_l},
  \end{equation*}
  showing that $x^{i_j}$
can be expressed as a convex combination of $\{x\} \cup \{ x^{i_l}: l = 1,\ldots,n+1;l\neq j\}$, 
all of which are points in $\Omega$. 
  Therefore, by convexity of $f$ on $\Omega$ (see \defref{convexity}),
  \begin{equation*}
    f(x^{i_j}) \le \frac{1}{1 + \sum_{k=1,k \neq j}^{n+1} \lambda_k} f(x) + 
\frac{1}{1 + \sum_{k=1,k \neq j}^{n+1} 
\lambda_k} \sum_{l=1,l \neq j}^{n+1} \lambda_l f\left( x^{i_l} \right).
  \end{equation*}
  Solving for $f(x)$ and using the fact that $m^{\bi}$ interpolates $f$ at 
  points in $X^{\bi}$, we obtain
  \begin{align*}
    f(x) &\ge \left(1 + \sum_{k=1,k \neq j}^{n+1}  \lambda_k\right) f(x^{i_j}) 
- \sum_{l=1,l \neq j}^{n+1}  \lambda_l f(x^{i_l})\\
    &= \left(1 + \sum_{k=1,k \neq j}^{n+1} \lambda_k\right) m^{\bi}(x^{i_j}) - 
\sum_{l=1,l \neq j}^{n+1} \lambda_l m^{\bi}(x^{i_l})\\
    &= m^{\bi}(x^{i_j}) + \sum_{l=1,l \neq j}^{n+1} \lambda_l 
\left(m^{\bi}(x^{i_j}) -  m^{\bi}(x^{i_l})\right)\\
    &= m^{\bi}(x),
  \end{align*}
  where the last equality holds by \eqref{eq:x_def} and the linearity of $m^{\bi}$. 
  Because $x$ is an arbitrary point in $\conep{x^{i_j} - X^{\bi}}$ for arbitrary $x^{i_j}$, the result is shown.
\end{proof}


We now prove that the 
cones in $\cU^{\bi}$ do not intersect 
when $X^{\bi}$ is poised. We note that the following result holds for any
poised set $X^{\bi} \subset \R^n$.
\begin{lemma}[A Point Is In At Most One Cone] \label{lem:only_one_cone}
  If $X^{\bi}$ is poised, no point $x \in \R^n$ satisfies $x \in
  \conep{x^{i_j} - X^{\bi}}$ and $x \in \conep{x^{i_k} - X^{\bi}}$ for 
$x^{i_j},x^{i_k} \in X^{\bi}$ and $x^{i_j}
  \neq x^{i_k}$.
\end{lemma}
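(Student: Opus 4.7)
\bigskip

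\noindent\textbf{Proof plan.} The plan is to translate each cone membership into an affine combination of the points in $X^{\bi}$, use poisedness to conclude uniqueness of that affine combination, and then derive a sign contradiction from the two representations.

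First I would suppose, for contradiction, that $x$ lies in both $\conep{x^{i_j}-X^{\bi}}$ and $\conep{x^{i_k}-X^{\bi}}$ with $j\neq k$. Applying \eqref{eq:cone_def} to the first membership gives nonnegative coefficients $\lambda_l$ such that
\begin{equation*}
  x \;=\; x^{i_j} + \sum_{l=1,\,l\neq j}^{n+1} \lambda_l\bigl(x^{i_j}-x^{i_l}\bigr)
     \;=\; \Bigl(1+\textstyle\sum_{l\neq j}\lambda_l\Bigr) x^{i_j} \;-\; \sum_{l\neq j}\lambda_l\, x^{i_l}.
\end{equation*}
Collecting coefficients, $x = \sum_{l=1}^{n+1}\alpha_l x^{i_l}$ where $\alpha_j = 1+\sum_{l\neq j}\lambda_l\ge 1$, $\alpha_l = -\lambda_l \le 0$ for $l\neq j$, and $\sum_l \alpha_l = 1$. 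Analogously, the second membership yields $x=\sum_{l=1}^{n+1}\beta_l x^{i_l}$ with $\beta_k\ge 1$, $\beta_l\le 0$ for $l\neq k$, and $\sum_l \beta_l = 1$.

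Next I would invoke poisedness: by \defref{poised}, the points in $X^{\bi}$ are affinely independent, so any affine combination representing $x$ is unique. Hence $\alpha_l=\beta_l$ for every $l\in\{1,\dots,n+1\}$. Focusing on the index $j$ (which differs from $k$), this forces $\alpha_j=\beta_j$; but $\alpha_j\ge 1$ while $\beta_j\le 0$ because $j\neq k$, a contradiction.

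The step that requires the most care is justifying that uniqueness of the affine combination really follows from the matrix $[\bar X^{\bi}\; e]$ being nonsingular; this is routine but worth spelling out, since it is exactly the poisedness hypothesis that makes the argument work. Everything else amounts to bookkeeping on the two cone representations.
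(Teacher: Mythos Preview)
Your proof is correct and follows essentially the same strategy as the paper's: both exploit poisedness (affine independence) to force uniqueness of the representation and then read off a sign contradiction between the two cone memberships. Your phrasing via affine (barycentric) coordinates is a bit more streamlined than the paper's explicit subtraction and rearrangement in terms of the vectors $x^{i_l}-x^{i_2}$, but the underlying argument is the same.
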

\begin{proof}
  Let $x^{i_1}$ and $x^{i_2}$ be different, but otherwise arbitrary, points in $X^{\bi}$. 
  To arrive at a contradiction, suppose that there 
exists $x \in \conep{x^{i_1}-X^{\bi}} \cap \conep{x^{i_2}-X^{\bi}}$. That is,
    $x = x^{i_1} + \sum_{l=2}^{n+1} \lambda_l \left(x^{i_1} - x^{i_l} \right)$ 
and 
    $x = x^{i_2} + \sum_{l=1,l \neq 2}^{n+1} \sigma_l \left(x^{i_2} - 
x^{i_l}\right)$
  for $\lambda_l \ge 0$ ($l \in \left\{2,\ldots,n+1 \right\}$)
  and $\sigma_l \ge 0$ ($l \in \left\{1,3,\ldots,n+1 \right\}$).  Subtracting 
these two expressions yields
  \begin{align}\label{eq:sum}
    0&= x^{i_1} - x^{i_2} + \sum_{l=2}^{n+1} \lambda_l (x^{i_1} - x^{i_l}) - 
\sum_{l=1,l \neq 2}^{n+1} \sigma_l (x^{i_2} - x^{i_l})\nonumber\\
    &= x^{i_1} - x^{i_2} + \sum_{l=2}^{n+1} \lambda_l x^{i_1} - 
\sum_{l=2}^{n+1} \lambda_l x^{i_2} + \sum_{l=2}^{n+1} \lambda_l x^{i_2} - 
\sum_{l=2}^{n+1} \lambda_l x^{i_l} - \sum_{l=1,l \neq 2}^{n+1} \sigma_l (x^{i_2} 
- x^{i_l})\nonumber\\
    &= \left(1 + \sum_{l=2}^{n+1} \lambda_l\right)\left(x^{i_1} - 
x^{i_2}\right) - \sum_{l=2}^{n+1} \lambda_l \left(x^{i_l} - x^{i_2}\right) + 
\sum_{l=1,l \neq 2}^{n+1} \sigma_l \left(x^{i_l} - x^{i_2}\right)\nonumber\\
    &= \left(1 + \sigma_1 + \sum_{l=2}^{n+1} 
\lambda_l\right)\left(x^{i_1} - x^{i_2}\right) + \sum_{l=3}^{n+1} \left( 
\sigma_l- \lambda_l \right) \left(x^{i_l} - x^{i_2}\right).
  \end{align}
  Since $X^{\bi}$ is a poised set, \defref{poised} ensures that the vectors 
$\left\{ x^{i_l}-x^{i_2} : \, i_l \in \bi,
  i_l \neq i_2 \right\}$ are linearly independent. Hence the dependence 
relation in \eqref{eq:sum} can be satisfied only if the coefficient on
  $(x^{i_1}-x^{i_2})$ vanishes. That is,
  \[
 1 + \sigma_1 + \sum_{l=2}^{n+1}\lambda_l  = 0,
  \]
  which contradicts $\lambda_l \ge 0$ (for $l=2,\ldots, n+1$) and $\sigma_1 \ge 
0$.
  Since $x^{i_1}$ and $x^{i_2}$ were arbitrary points in $X^{\bi}$, the result 
is shown. 
\end{proof}

For each poised set $X^{\bi}$, \lemref{cone} ensures that the secant function
$m^{\bi}$ 
underestimates $f$ 
on $\Omega \cap \cU^{\bi}$. 
We can therefore underestimate $f$ via a model that consists of the
pointwise maximum of the underestimators for which the point is in
$\cU^{\bi}$ for some poised set $X^{\bi}$ of
previously evaluated points. Minimizing this nonconvex model on the
integer $\Omega$ can then provide 
a lower bound for the global minimum of $f$ on $\Omega$.

\begin{figure}
  \begin{center}
    \includegraphics[width=1.8in]{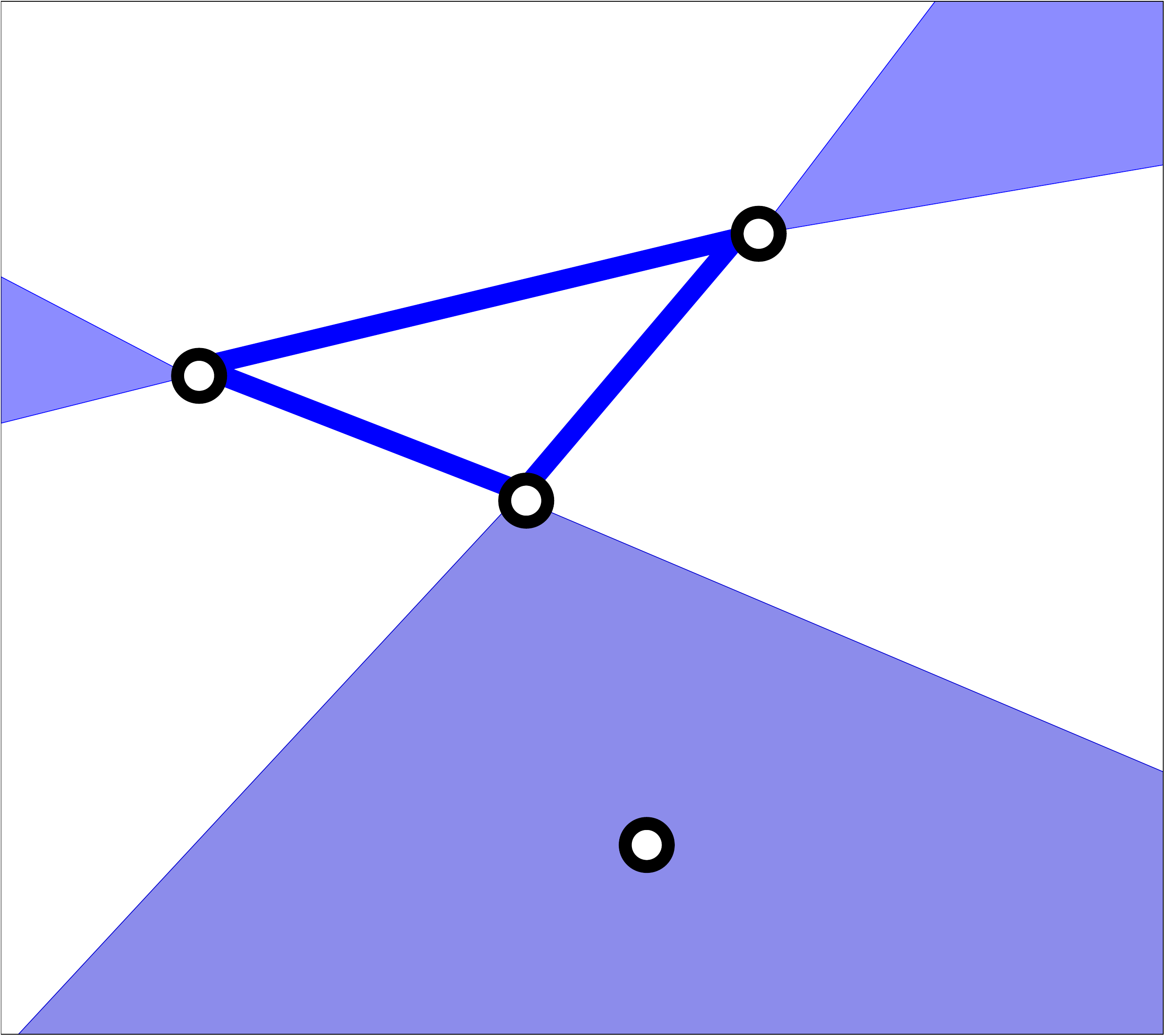}
    \hspace{20pt}
    \includegraphics[width=1.8in]{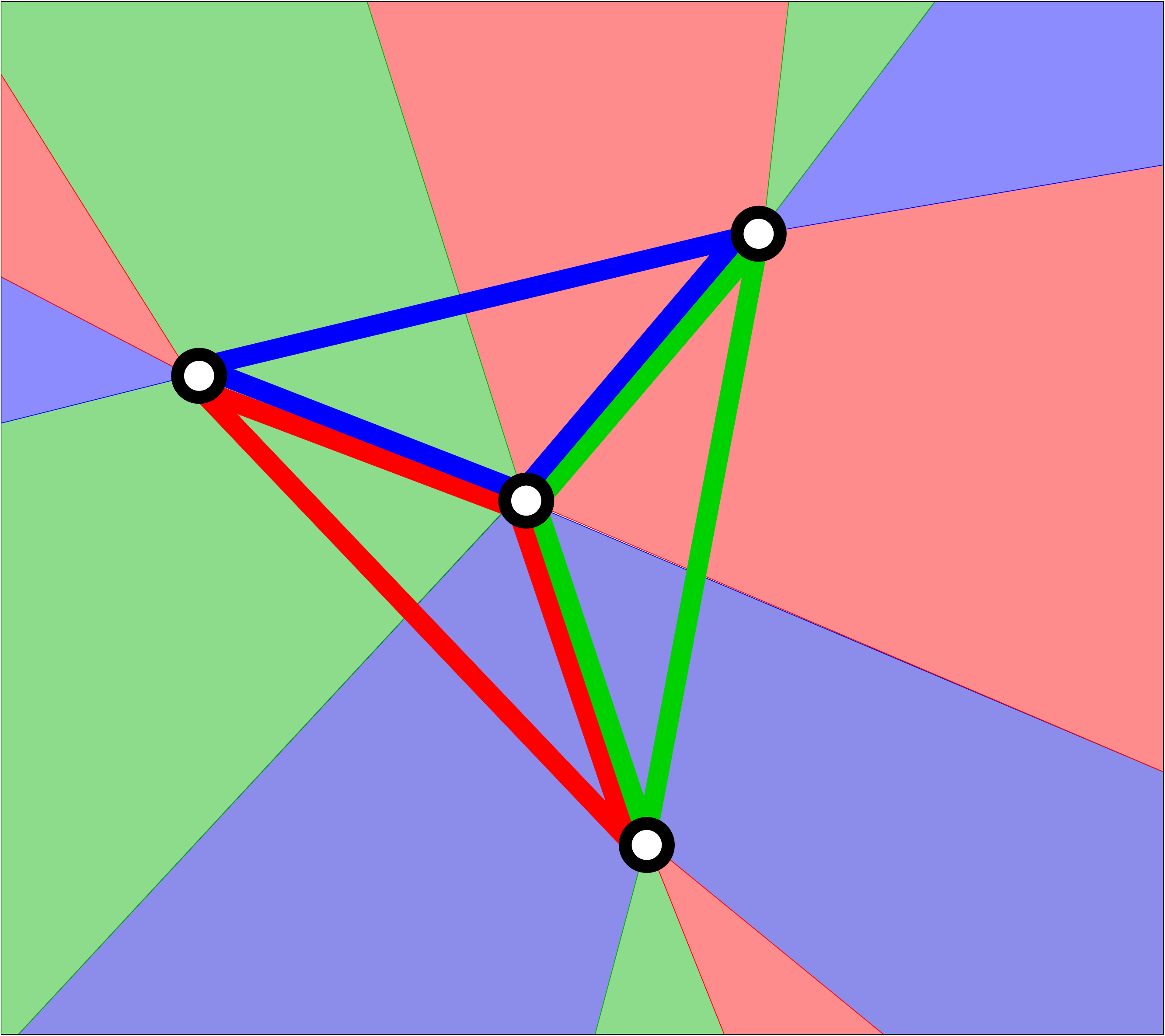}
    \caption{Illustration of areas in $\R^2$ where conditional cuts are valid.
    Left shows the regions of the domain where the secant through
    three points (the vertices of the blue triangle) will underestimate $f$.
    Right shows that one point in the interior of $n+1$ points is sufficient to
    underestimate $f$. The conditional cuts correspond to the $n+1$ points in the
    triangle of the same color.\label{fig:Cuts2D}}
  \end{center}
\end{figure}

\figref{Cuts2D} shows a two-dimensional example of points that produce such secant functions
and the regions in which they will underestimate any convex function $f$.
For the $n+2$ points (circles), we consider three poised sets indicated by
triangles linking $n+1$ points. The left image shows a poised set
(blue line triangle), and the three cones (shaded blue area) in which the secant
through these points is a valid underestimator. The right image shows
that conditional cuts using $n+2$ points can cover all of
$\R^n$. 

\subsection{Lower Bound for $f$}
We now describe an optimization problem whose solution provides a lower bound
for $f$ on $\Omega$. Let $W(X)$ denote the
set of all multi-indices corresponding to poised subsets of
$X$:
\begin{equation}\label{eq:WX_def}
  W(X) \defined \left\{ \bi: X^{\bi} \subseteq X, X^{\bi} \mbox{ is poised} \right\}.
\end{equation}
If $f$ has been evaluated at every point in $X$, we can construct a secant function 
$(c^{\bi})^\T x + b^{\bi}$ 
interpolating $f$ on $X^{\bi}$ for every
multi-index $\bi \in W(X)$.
We then collect all such conditional cuts in the piecewise linear program
\begin{equation} \label{model:PLP} \tag{PLP}
  \begin{aligned}
    \dps \mini_{x,\eta} \; & \eta \\
    \st  \; & \dps \eta \geq (c^{\bi})^\T x + b^{\bi}, \text{ for all } \bi \in W(X) 
\text{ with } x \in \cU^{\bi}\\
    \; & \dps x \in \Omega,
  \end{aligned}
\end{equation}
where $\cU^{\bi}$ is defined in \eqref{eq:U_def}.
For the set of points $X$ and corresponding $W(X)$, let $\eta(\bar{x})$ denote the value of
\eqref{model:PLP} when the constraint $x = \bar{x}$ is added to \eqref{model:PLP} for a particular $\bar{x} \in \Omega$.
As we will see below, $\eta$ represents 
the largest lower bound for $f$ induced by the set $X$, and the solution to
\eqref{model:PLP} provides a lower bound for $f$ on $\Omega$.

\begin{lemma}[Underestimator of $f$]\label{lem:underEst}
  If $f$ is convex on $\Omega$, then the optimal value $\eta_*$ of
  \eqref{model:PLP} satisfies $\eta_* \le f(x)$ for all $x \in \Omega$.
\end{lemma}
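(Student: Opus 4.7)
The plan is to show that for every $x \in \Omega$, the point $(x,f(x))$ is feasible for \eqref{model:PLP}, so that the minimum value $\eta_\ast$ can be no larger than $f(x)$. This reduces the lemma to a direct consequence of \lemref{cone}.

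First, I would fix an arbitrary $\bar{x} \in \Omega$ and consider the candidate pair $(\bar{x},\,\bar{\eta})$ with $\bar{\eta} \defined f(\bar{x})$. The feasibility condition on $\bar{x}$ is satisfied by assumption. It remains to verify the linear inequalities: for every $\bi \in W(X)$ such that $\bar{x} \in \cU^{\bi}$, we must have
\begin{equation*}
  \bar{\eta} \;=\; f(\bar{x}) \;\ge\; (c^{\bi})^\T \bar{x} + b^{\bi} \;=\; m^{\bi}(\bar{x}).
\end{equation*}
Since $\bi \in W(X)$ means $X^{\bi}$ is poised and $X^{\bi} \subseteq X \subseteq \Omega$, \lemref{cone} applies: the unique secant $m^{\bi}$ satisfies $m^{\bi}(x) \le f(x)$ for all $x \in \cU^{\bi} \cap \Omega$. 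Applied to $\bar{x}$, this yields exactly the desired inequality. Note that when no $\bi \in W(X)$ has $\bar{x} \in \cU^{\bi}$, the feasibility check is vacuous, and $(\bar{x},\bar{\eta})$ is trivially feasible.

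Having established feasibility of $(\bar{x}, f(\bar{x}))$, I would conclude that $\eta_\ast \le f(\bar{x})$ because $\eta_\ast$ is the optimal (minimum) value of \eqref{model:PLP}. Since $\bar{x} \in \Omega$ was arbitrary, this gives $\eta_\ast \le f(x)$ for all $x \in \Omega$, which is the claim.

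There is no real obstacle here: the work has already been done in \lemref{cone}, which established the pointwise underestimation on each cone $\cU^{\bi}$, and \eqref{model:PLP} simply aggregates these conditional cuts. The only subtlety worth highlighting explicitly in the write-up is that the indexing set $W(X)$ in \eqref{eq:WX_def} restricts attention to poised subsets, which is exactly the hypothesis required by \lemref{cone}; and that the condition ``$x \in \cU^{\bi}$'' appearing inside \eqref{model:PLP} is precisely the domain on which \lemref{cone} certifies validity of the cut, so the two formulations are compatible.
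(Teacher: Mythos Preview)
Your proof is correct and follows essentially the same approach as the paper: both arguments fix an arbitrary $\bar{x}\in\Omega$, invoke \lemref{cone} to show that every active cut at $\bar{x}$ is dominated by $f(\bar{x})$ (vacuously so when no cut is active), and conclude that $\eta_*\le f(\bar{x})$. The paper phrases this via the restricted value $\eta(\bar{x})$ of \eqref{model:PLP} with $x=\bar{x}$ fixed, whereas you phrase it as feasibility of the pair $(\bar{x},f(\bar{x}))$, but these are equivalent.
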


\begin{proof}
If $W(X)$ is empty, the result holds trivially since $\eta$ is unconstrained. Otherwise,
since \eqref{model:PLP} minimizes $\eta$, it suffices to show
that $\eta(x) \le f(x)$ for arbitrary $x \in \Omega$. Two cases
can occur.
First, if $x \notin \cU^{\bi}$ for every $\bi \in W(X)$, then no conditional cut
exists at $x$. Thus $\eta(x) = -\infty$ and $\eta(x) < f(x)$.
Second, if $x \in \cU^{\bi}$ for some $\bi \in W(X)$, then 
\[ 
(c^{\bi})^\T x + b^{\bi} \le f(x),
\]
by \lemref{cone}, where the poisedness of $X^{\bi}$ follows from the
definition of $W(X)$.
Therefore, $\eta(x) \le
f(x)$ for all $x \in \Omega$.
Since $\eta_* = \ds \min_{x \in \Omega} \eta(x)$,
the result is shown.
\end{proof}
If $W(X)$ in \eqref{model:PLP} is replaced by a proper subset $W'(X) \subset W(X)$ of
multi-indices, then \lemref{underEst} still holds. (This relaxation of
\eqref{model:PLP} associated with removing constraints cannot
increase $\eta_{*}$.) Such a replacement may be necessary if
$W(X)$ becomes too large to allow considering every poised subset of
$X$ when forming \eqref{model:PLP}.

\subsection{Covering $\R^n$ with Conditional Cuts}\label{sec:covering}

Because the cuts in \eqref{model:PLP} are valid only within $\cU^{\bi}$, 
the resulting 
model takes an optimal value of $\eta_*=-\infty$ if there is a point $x \in \Omega$ that
is not in the union of $\cU^{\bi}$ over all $\bi \in W(X)$.
Thus, we find it beneficial to ensure that $X$ contains points that result in a finite
objective value for the underestimator
described by \eqref{model:PLP}. We now establish
a condition that ensures that the union of conditional cuts
induced by $X$ covers $\mathbb{R}^n$ and therefore $\Omega$.

We say that a point $x^0$ belongs to the interior of the convex hull of a set
of points $X = \{ x^1,\ldots,x^{n+1} \}$
if scalars $\alpha_j$ exist such that
\begin{equation}\label{eq:intConvHull}
  x^0 = \sum_{j=1}^{n+1} \alpha_j x^j, \mbox{ where } \sum_{j=1}^{n+1} \alpha_j = 1 \mbox{ and } \alpha_j > 0 \mbox{ for } j = 1,\ldots,n+1.
\end{equation}
This is denoted by $x^0 \in \intp{\convp{X}}$.
\begin{lemma}[Poisedness of Initial Points]\label{lem:intConvPoised}
  If $X = \{x^1,\ldots,x^{n+1}\} \subset \mathbb{R}^n$ is a poised set and if $x^0$ satisfies $x^0 \in \intp{\convp{X}}$, then
all subsets of $n+1$ points in $\{x^0\} \cup X$ are poised.
\end{lemma}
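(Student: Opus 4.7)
The plan is to enumerate the possible subsets of $n+1$ points in $\{x^0\} \cup X$, observe that only one case really needs work, and then reduce that case to the poisedness of $X$ by exploiting the strict positivity of the $\alpha_j$ in the convex combination \eqref{eq:intConvHull}.

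There are exactly $n+2$ such subsets: one is $X$ itself, which is poised by hypothesis, and the remaining $n+1$ are of the form $Y_k \defined \{x^0\} \cup X \setminus \{x^k\}$ for some index $k \in \{1,\ldots,n+1\}$. By symmetry it suffices to treat one of these, so without loss of generality I would fix $k=1$ and analyze $Y_1 = \{x^0, x^2, \ldots, x^{n+1}\}$. The goal is to show that $Y_1$ is affinely independent, which (by \defref{poised}) is equivalent to poisedness.

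The main step is a proof by contradiction. Suppose $Y_1$ is not affinely independent; then there exist scalars $\beta_0, \beta_2, \ldots, \beta_{n+1}$, not all zero, with $\beta_0 x^0 + \sum_{j=2}^{n+1} \beta_j x^j = 0$ and $\beta_0 + \sum_{j=2}^{n+1} \beta_j = 0$. Substituting the expression $x^0 = \sum_{j=1}^{n+1} \alpha_j x^j$ from \eqref{eq:intConvHull} into the first relation yields
\begin{equation*}
\beta_0 \alpha_1 \, x^1 + \sum_{j=2}^{n+1} (\beta_0 \alpha_j + \beta_j) \, x^j = 0,
\end{equation*}
and a direct computation using $\sum_{j=1}^{n+1}\alpha_j = 1$ shows that the coefficients on the $x^j$ still sum to zero. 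Because $X = \{x^1,\ldots,x^{n+1}\}$ is poised and hence affinely independent, all of these coefficients must vanish: $\beta_0 \alpha_1 = 0$ and $\beta_0 \alpha_j + \beta_j = 0$ for $j = 2, \ldots, n+1$. Since $\alpha_1 > 0$ by \eqref{eq:intConvHull}, the first equation forces $\beta_0 = 0$, and the remaining equations then force $\beta_j = 0$ for all $j$, contradicting the assumption that the $\beta$'s were not all zero.

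There is essentially no obstacle here beyond bookkeeping; the only place where the strict interiority hypothesis is used is in concluding $\alpha_1 > 0$ (and, by symmetry when treating general $k$, $\alpha_k > 0$). A proof that used only $x^0 \in \convp{X}$ (allowing some $\alpha_j = 0$) would fail, because then $x^0$ could coincide with the affine hull of a proper subface of $X$ and the corresponding $Y_k$ would collapse. So the crux is recognizing that the strict positivity of every $\alpha_j$ in \eqref{eq:intConvHull} is exactly what one needs to transfer affine independence from $X$ to each $Y_k$.
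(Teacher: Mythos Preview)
Your proof is correct and follows essentially the same approach as the paper's: both argue by contradiction, substitute the convex combination \eqref{eq:intConvHull} for $x^0$ into an assumed affine-dependence relation, and then invoke the affine independence of $X$ together with the strict positivity of the relevant $\alpha_j$ to force all coefficients to zero. The only cosmetic difference is that the paper writes affine dependence in the ``differences'' form $\sum_j \beta_j (x^j - x^n) = 0$, whereas you use the equivalent ``coefficients summing to zero'' form; the logic is otherwise identical.
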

\begin{proof}
  For contradiction, suppose that the set $\{x^0\} \cup X \setminus
  \{x^{n+1}\}$ is not poised and therefore is affinely dependent. Then, 
  there must exist scalars $\beta_j$ not all
  zero, and (without loss of generality) $x^n \in X$ such that
  \begin{equation}\label{eq:lindep}
    \sum_{j=0}^{n-1} \beta_j (x^j - x^n) = 0.
  \end{equation}
  Replacing $x^0$ with \eqref{eq:intConvHull} in the left-hand side above yields 
  \begin{align}\label{eq:sum_2}
    0 &=  \beta_0\left(\sum_{j=1,j\neq n}^{n+1} \alpha_j x^j + (\alpha_n
    -1)x^n\right) + \sum_{j=1}^{n-1} \beta_j (x^j - x^n)  \nonumber\\
    &=  \sum_{j = 1}^{n-1} (\beta_0 \alpha_j + \beta_j) x^j + \bigg(\beta_0(\alpha_n -1)
    -\sum_{j = 1}^{n-1} \beta_j \bigg) x^n + \beta_0 \alpha_{n+1}x^{n+1}.
  \end{align}
  Since $X$ is poised, the vectors $\{ x^1,\ldots, x^{n+1}\}$ are affinely
  independent; by definition of affine independence, the only solution to
  $\sum_{j=1}^{n+1} \gamma_j x^j  = 0$ and $\sum_{j=1}^{n+1} \gamma_j = 0$ is
  $\gamma_j = 0$ for $j = 1,\ldots,n+1$. The sum of the coefficients
  from \eqref{eq:sum_2} satisfies
  \[
  \sum_{j = 1}^{n-1} (\beta_0 \alpha_j + \beta_j) + \beta_0(\alpha_n - 1)
  -\sum_{j=1}^{n-1}\beta_j + \beta_0 \alpha_{n+1} = \beta_0  \sum_{j =
  1}^{n+1} \alpha_j - \beta_0 = 0,
  \]
  because $\sum_{j=1}^{n+1}\alpha_j = 1$. This means that all coefficients
  of $x^j$ in \eqref{eq:sum_2} are also equal to zero.
  Since $\alpha_{n+1}>0$, the last term from \eqref{eq:sum_2} implies 
  that $\beta_0 = 0$.  Considering the remaining coefficients in \eqref{eq:sum_2},
  we conclude that $\beta_0 \alpha_j + \beta_j=0$, which implies
  that $\beta_j = 0$ for $j=1,\ldots, n-1$. This contradicts the assumption
  that not all $\beta_j = 0$.  Hence, the result is proved.
\end{proof}

We now establish a simple set of points that produces 
conditional cuts that cover $\mathbb{R}^n$ and, therefore, the domain $\Omega$. 
\begin{lemma}[Initial Points and Coverage of $\Omega$]\label{lem:startPoints}
Let $X$ be a poised set of $n+1$ points, let $x^0 \in
\intp{\convp{X}}$, and let $W(X \cup \{x^0\})$ be defined as in \eqref{eq:WX_def}.
Then,
\[
\bigcup\limits_{\bi \in W(X\cup\{x^0\})} \cU^{\bi} = \R^n.
\]
\end{lemma}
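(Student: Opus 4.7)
The plan is to fix an arbitrary $y\in\R^n$, express it in barycentric coordinates with respect to $X=\{x^1,\ldots,x^{n+1}\}$, and then identify a specific $(n+1)$-subset of $X\cup\{x^0\}$ (automatically poised by \lemref{intConvPoised}) one of whose cones at a vertex contains $y$. The working characterization I would establish first is that, for any poised $X^{\bi}$, a point $z\in\R^n$ belongs to $\cU^{\bi}$ if and only if its barycentric coordinates $(\mu_{i_1},\ldots,\mu_{i_{n+1}})$ with respect to $X^{\bi}$ have at most one strictly positive entry. Since these coordinates sum to one, this is equivalent to some $\mu_{i_j}\geq 1$ with every other $\mu_{i_l}\leq 0$, matching \eqref{eq:cone_def} under the substitution $\lambda_l=-\mu_{i_l}$.

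Write $y=\sum_{j=1}^{n+1}\mu_j x^j$ with $\sum_j\mu_j=1$ and let $P\defined\{j:\mu_j>0\}$; the sum-to-one constraint forces $P\neq\emptyset$. If $|P|=1$, the characterization immediately places $y$ in a cone of the original simplex $X$, so assume $|P|\geq 2$. I would then choose $k^\star\in P$ achieving $\max_{k\in P}\mu_k/\alpha_k$, where the $\alpha_j>0$ are the strict convex-combination weights from \eqref{eq:intConvHull} that express $x^0$, and form $X'\defined\{x^0\}\cup X\setminus\{x^{k^\star}\}$, which is poised by \lemref{intConvPoised} and therefore contributes a multi-index to $W(X\cup\{x^0\})$.

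Solving \eqref{eq:intConvHull} for $x^{k^\star}$ and substituting into $y=\sum_j\mu_j x^j$ yields the barycentric representation of $y$ with respect to $X'$:
\[
\mu'_0 = \frac{\mu_{k^\star}}{\alpha_{k^\star}}, \qquad \mu'_j = \alpha_j\left(\frac{\mu_j}{\alpha_j} - \frac{\mu_{k^\star}}{\alpha_{k^\star}}\right) \quad\text{for } j\neq k^\star,
\]
and a direct calculation using $\sum_j\alpha_j=1$ confirms $\mu'_0+\sum_{j\neq k^\star}\mu'_j=1$. Because $k^\star\in P$, one has $\mu'_0>0$. For $j\neq k^\star$ with $j\in P$, the maximizing choice of $k^\star$ gives $\mu_j/\alpha_j\leq\mu_{k^\star}/\alpha_{k^\star}$ and hence $\mu'_j\leq 0$; for $j\notin P$, $\mu_j\leq 0$ while the subtracted term is strictly positive, so $\mu'_j<0$. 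Thus $\mu'_0$ is the only positive barycentric coordinate, forcing $\mu'_0\geq 1$, and the characterization places $y\in\cU^{\bi}$ for the multi-index $\bi$ indexing $X'$.

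The main obstacle I foresee is selecting which vertex to swap out for $x^0$: more obvious choices (the largest $\mu_k$, an arbitrary $k\in P$, and so on) generally fail to render every $\mu'_j$ nonpositive. Using the ratio $\mu_k/\alpha_k$ is essential because maximizing it over $P$ is precisely what drives each residual $\mu_j-(\mu_{k^\star}/\alpha_{k^\star})\alpha_j$ to at most zero for every $j\neq k^\star$, thereby leaving $y$ with exactly one positive barycentric coordinate with respect to $X'$ and guaranteeing coverage of all of $\R^n$.
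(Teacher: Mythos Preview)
Your argument is correct and genuinely different from the paper's. The paper establishes that $\{x^0-x^j\}_{j=1}^{n+1}$ is a positive spanning set, writes $x-x^0=\sum_{j\neq l}\lambda_j(x^0-x^j)$, and then runs an iterative pivoting procedure: while some $\lambda_{j'}<0$, swap the omitted index $l$ for $j'$ using the relation $\sum_j\alpha_j(x^0-x^j)=0$, which strictly increases all remaining $\lambda_j$; a finiteness argument then shows termination with all $\lambda_j\geq 0$. Your barycentric-coordinate characterization and the single ratio test $k^\star\in\argmax_{k\in P}\mu_k/\alpha_k$ replace this loop with a one-shot construction, directly identifying both the simplex $X'=\{x^0\}\cup X\setminus\{x^{k^\star}\}$ and the apex $x^0$ of the covering cone. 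This buys you a cleaner, fully constructive proof that sidesteps the somewhat informal termination argument in the paper. Conversely, the paper's positive-spanning viewpoint makes the geometric picture (the $n+1$ cones at $x^0$ tiling $\R^n$) more explicit. A minor remark: your $|P|=1$ case is not actually needed, since the ratio argument already covers it (just take the unique $k\in P$); retaining it is fine but not essential.
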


\begin{proof}
  Since $x^0 \in \intp{\convp{X}}$, there exist $\alpha_j > 0$ such that  
  \begin{equation}\label{eq:spanSet}
    0 = (\sum_{j = 1}^{n+1} \alpha_j) (x^0 - x^0) = (\sum_{j = 1}^{n+1} \alpha_j) x^0 - \sum_{j = 1}^{n+1} \alpha_j x^j = \sum_{j = 1}^{n+1} \alpha_j (x^0-x^j),
  \end{equation}
  where the second equality follows from \eqref{eq:intConvHull}. The existence
  of $\alpha_j>0$ such that $\sum_{j=1}^{n+1}\alpha_j (x^0-x^j) = 0$ implies
  that the vectors $\left\{x^0-x^j: j \in \{ 1,\ldots,n+1\}\right\}$ are a
  positive spanning set (see, e.g., \cite[Theorem~2.3~(iii)]{Conn2009a}). Therefore
  any $x \in \mathbb{R}^n$ can be expressed as 
  \[
  x = \sum_{j=1}^{n} \alpha_j (x^0-x^j),
  \]
  with $\alpha_j \ge 0$ for all $j$.

  We will show that any $x \in \R^n$ belongs to
  $\cU^{\bi}$ for some multi-index $\bi$ containing $x^0$.
  By \lemref{intConvPoised}, every set of $n$ distinct vectors of the form $(x^0 - x^j)$
  for $x^j \in X$ is a 
  linearly independent set. Thus we can express
  \begin{equation}\label{eq:incone}
    x - x^0 = \sum_{j=1,j \neq l}^{n+1} \lambda_j (x^0 - x^j),
  \end{equation}
  for some $l \in \{1, \ldots n+1 \}$. If $\lambda_j \geq 0$ for each
  $j$, then we are done, and $x \in \conep{x^0 - X \setminus \{x^l\}}$.

  Otherwise, choose an index $j'$ such that $\lambda_{j'}$ is the most negative
  coefficient on the right of \eqref{eq:incone} (breaking ties arbitrarily).
  Using \eqref{eq:spanSet}, we can exchange the indices
  $l$ and $j'$ in \eqref{eq:incone} by observing that
  \begin{equation*}
    \lambda_{j'} (x^0 - x^{j'}) =
    \frac{-\lambda_{j'}}{\alpha_{j'}}\left(\sum_{j = 1,j \neq j'}^{n+1} \alpha_j (x^0-x^j)\right).
  \end{equation*}
  Note that $\frac{-\lambda_{j'}}{\alpha_{j'}}\alpha_j>0$ by \eqref{eq:intConvHull},
  and we can rewrite \eqref{eq:incone} as
  \begin{equation}\label{eq:incone1}
    x - x^0 = \sum_{j=1,j \neq j'}^{n+1} \mu_j (x^0 - x^j),
  \end{equation}
  with new coefficients $\mu_j$ that are strictly larger than $\lambda_j$:
  \[ \mu_j = \left\{ \begin{array}{ll} \dps
                      \lambda_j - \frac{\lambda_{j'}}{\alpha_{j'}}\alpha_j > \lambda_j, \quad & j \not = l, j \not = j' \\
                      - \frac{\lambda_{j'}}{\alpha_{j'}}\alpha_j \quad & j = l .
                     \end{array}
             \right. 
  \]
  Observe that \eqref{eq:incone1} has the same form as \eqref{eq:incone} but with
  coefficients $\mu_j$ that are strictly greater than $\lambda_j$. We can now
  define $\lambda\defined\mu$ and repeat the process. If there is some
  $\lambda_{j'}<0$, the process will strictly increase all $\lambda_j$. Because
  there are only a finite number of subsets of size $n$, we must eventually have
  all $\lambda_j \geq 0$. Once
  $\lambda_{j'}$ has been pivoted out, it can reenter only with a positive value (like $\mu_l$ above), so eventually all
  $\lambda_j$ will be nonnegative. 
\end{proof}

\lemref{startPoints} ensures that any poised set of $n+1$ points with an additional
point in their interior will produce conditional cuts that cover $\R^n$. 
\figref{Cuts2D} illustrates this for $n=2$. 
An alternative set of $n+2$ points is
\[ 
X = \left\{ 0, e_1, e_2, \ldots, e_n, -e \right\},
\]
where $e_i$ is the $i$th unit vector and $e$ is the vector of ones.
Larger sets, such as those of the form 
\[ X = \left\{ 0, e_1, -e_1, \ldots, e_n, -e_n \right\}, \]
will similarly guarantee coverage of $\R^n$.

We note that the results in this section do not rely on $X$ or $\Omega$ being a
subset of $\Z^n$. Therefore, the results are readily applicable to the
case when $f$ has continuous and integer variables.

\begin{algorithm2e}[b!]
  \DontPrintSemicolon
  \caption{Identifying a global minimizer of a convex objective on integer $\Omega$.\label{alg:framework}}
  \LinesNumbered
  \SetAlgoNlRelativeSize{-5}
  \SetKwInput{Input}{Input}
  \SetKwInOut{Output}{Output}
  \Input{A set of evaluated points ${X^0} \subseteq \Omega$ satisfying $\left| W(X^0) \right| > 0$ 
  }
  Set $\hat{x} \in \ds \argmin_{x \in X^{0}} f(x)$, upper bound $u_0 \gets f(\hat{x})$, and lower bound $l_0 \gets -\infty$; $k \gets 0$\\
  \While{$l_k < u_k$\label{line:break_out_of_while}} {
    \underline{\textsf{Update:}} Update the piecewise linear program \eqref{model:PLP} using $W(X^k)$ \\
    \underline{\textsf{Lower Bound:}} Solve \eqref{model:PLP} and let its optimal value be $l_{k+1}$ \\
    \underline{\textsf{Next Iterate:}} Select a new trial point $x^{k+1} \in \Omega \setminus X^{k}$ \label{line:select_next_x}\\
    Evaluate $f(x^{k+1})$ and set $X^{k+1}\gets X^{k} \cup \{x^{k+1}\}$ \label{line:adding_to_X}\\
    \eIf{$f(x^{k+1}) < u_k$}
        {
          \underline{\textsf{Upper Bound:}} New incumbent $\hat{x} \gets x^{k+1}$ and upper bound $u_{k+1} \gets f(x^{k+1})$ \label{line:updateF}
        }
        { $u_{k+1} \gets u_{k}$ }
         $k \gets k+1$\\
  }
  \Output{$\hat{x}$, a global minimizer of $f$ on $\Omega$}
\end{algorithm2e}

\section{Convergence Analysis}\label{sec:algm}

We now present \algref{framework} to identify global solutions to
\eqref{eq:MIP-DFO} under \asref{1}. This algorithm constructs a sequence of
underestimators of the form \eqref{model:PLP}.
\secref{PLP}
shows two approaches for modeling the
underestimator and \secref{details} highlights other details that are important for
an efficient implementation of \algref{framework}. For example, the next point evaluated 
can be a solution of \eqref{model:PLP} but this need not be the case.

Note that \eqref{model:PLP} provides a valid lower bound for $f$ on
$\Omega$. If $X \subseteq \Omega$ are points where $f$ has been evaluated,
then $\min\left\{ f(x): x \in X \right\}$ is an upper bound for the minimum of $f$
on $\Omega$. \algref{framework} 
terminates when the upper bound is equal to the lower bound provided by \eqref{model:PLP}. We
observe that \algref{framework} produces a 
nondecreasing sequence of lower bounds provided that
conditional cuts are not removed from \eqref{model:PLP};
we show in \thmref{convergence} that this sequence of lower bounds will
converge to the global minimum of $f$ on $\Omega$.

\algref{framework} resembles a traditional outer-approximation
approach~\cite{bonami.etal:08,Duran1986,Fletcher1994} in that it 
obtains a sequence of lower bounds of \eqref{eq:MIP-DFO} using an underestimator that is
updated after each function evaluation. These function evaluations provide a
nonincreasing sequence of upper bounds on the objective; when the 
upper bound equals the lower bound provided by the underestimator, the
method can terminate with a certificate of optimality.

\algref{framework} leaves open a number of important decisions concerning how  
\eqref{model:PLP} is formulated and solved and how the next iterate is selected.
While we will discuss more involved options for addressing these concerns, 
a simple choice would be to add all new possible cuts and let the next iterate be a
minimizer of \eqref{model:PLP}.
If this minimizer is not chosen, a (possibly difficult) separation
problem  may have to be solved to obtain a new iterate in
\linerefa{select_next_x}, for example, when $\Omega$ is sparse.
Although such choices can result in computational difficulties, these choices are
useful for showing the behavior of
\algref{framework}, which we do now. In \figref{1dExCP} we see three
iterations of \algref{framework} solving
the one-dimensional problem
\[ \mini \; f(x)=x^2 \quad \st \; x \in [-4,4], x\in\mathbb{Z}.\]
Black dots indicate interpolation points where $f$ has been previously evaluated, and green dots
indicate the solution to \eqref{model:PLP} in each iteration. The solid red lines show
the piecewise linear underestimator of the function. We 
observe that \lemref{cone} can be strengthened for
one-dimensional problems where conditional cuts underestimate convex 
$f$ at all points outside the convex hull of the points used to
determine the corresponding secant function. (This is not true for $n>1$.)

\begin{figure}[t]
\center
\captionsetup[subfloat]{labelformat=empty}
\subfloat[Iter.~1: $l_k=-9,u_k=1,\hat{x}=1$]{\includegraphics[width=0.33\textwidth]{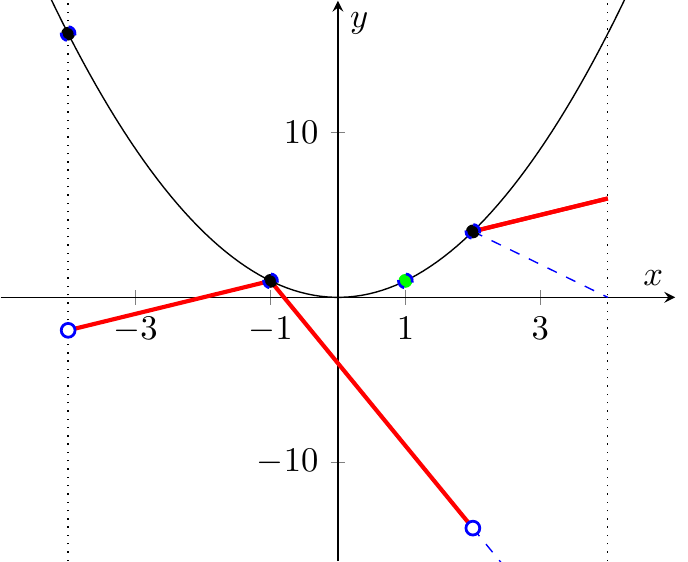}}
\subfloat[Iter.~2: $l_k=-0.27,u_k=0,\hat{x}=0$]{\includegraphics[width=0.33\textwidth]{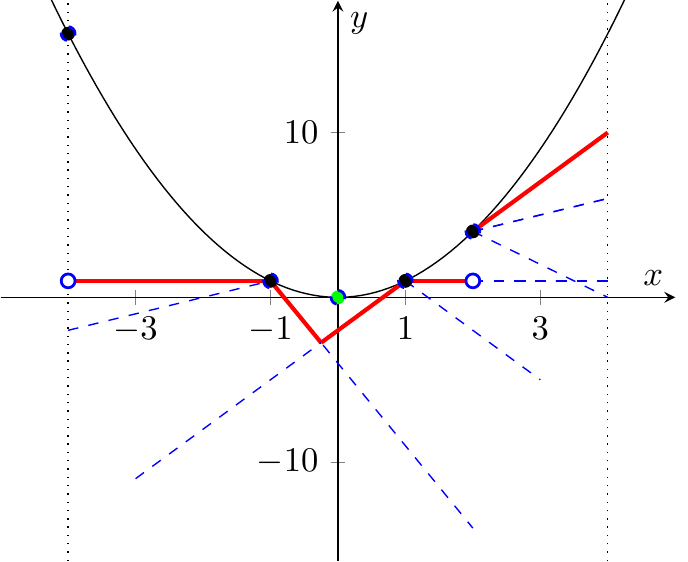}}
\subfloat[Iter.~3: $l_k=0,u_k=0,\hat{x}=0$]{\includegraphics[width=0.33\textwidth]{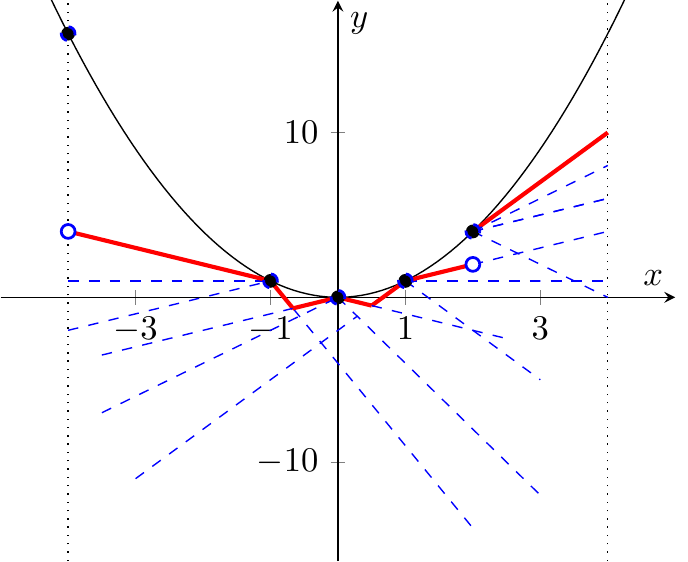}}
\caption{Illustration of \algref{framework} minimizing $f(x)=x^2$ on $[-4,4] \cap \Z$.\label{fig:1dExCP}}
\end{figure}

We now prove that \algref{framework} identifies a global minimizer
of $f$.
\begin{theorem}[Convergence of Algorithm~\ref{alg:framework}]\label{thm:convergence}
If \asref{1} holds, \algref{framework} terminates at an
optimal solution $x^*$ of \eqref{eq:MIP-DFO} in finitely many iterations.
\end{theorem}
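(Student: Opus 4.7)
The plan is to decouple two concerns: finite termination of the main loop, which will follow from the finiteness of $\Omega$, and correctness of the returned incumbent, which will rely on showing that once $X^k$ is ``large enough'' the lower bound produced by \eqref{model:PLP} matches the incumbent value. I will prove termination first, then handle the quality of the final solution.

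First I would observe that \asref{1} forces $\Omega$ to be a finite set, since it is an integer-valued bounded subset of $\Z^n$. Every iteration of the main loop that does not satisfy the test on \linerefa{break_out_of_while} adds a fresh point $x^{k+1}\in\Omega\setminus X^k$ via \linerefa{select_next_x} and \linerefa{adding_to_X}, so $|X^k|$ is strictly increasing along iterates that execute the body. Hence in at most $|\Omega|-|X^0|$ iterations we must reach an index $k$ with $X^k=\Omega$; from that point on, the incumbent update at \linerefa{updateF} guarantees $u_k=f^*:=\min_{x\in\Omega}f(x)$, since every feasible point has been evaluated.

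The key technical step will be to show that, as long as $W(X^0)\neq\emptyset$, every $\bar{x}\in X^k$ belongs to at least one poised $(n+1)$-subset of $X^k$, that is, $\bar{x}\in X^{\bi}$ for some $\bi\in W(X^k)$. I would prove this by a Steinitz-style basis exchange: fix any poised $X^{\bi^0}=\{y_1,\ldots,y_{n+1}\}\subseteq X^0$, whose translates $\{y_l-y_1\}_{l=2}^{n+1}$ form a basis of $\R^n$ by \defref{poised}. If $\bar{x}=y_j$ for some $j$ the claim is immediate; otherwise expand $\bar{x}-y_1$ in this basis, select an index $l^{\star}$ with nonzero coefficient (which must exist because $\bar{x}\neq y_1$), and swap $y_{l^{\star}}$ out for $\bar{x}$. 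The resulting subset of $X^k$ is still affinely independent and contains $\bar{x}$.

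To conclude, let $(\tilde{x},\eta_*)$ be an optimal solution of \eqref{model:PLP} at the first iteration at which $X^k=\Omega$. By \lemref{underEst} we have $\eta_*\leq f^*$. By the basis-exchange claim applied to $\tilde{x}\in\Omega=X^k$, some $\bi\in W(X^k)$ satisfies $\tilde{x}\in X^{\bi}$, and hence $\tilde{x}\in\cU^{\bi}$ by taking all cone coefficients to be zero in \eqref{eq:cone_def}; the corresponding PLP inequality is then active, giving $\eta_*\ge (c^{\bi})^{\T}\tilde{x}+b^{\bi}=f(\tilde{x})\geq f^*$. Combining yields $l_{k+1}=\eta_*=f^*=u_{k+1}$, so the test on \linerefa{break_out_of_while} fails on the next pass and \algref{framework} returns an incumbent $\hat{x}$ with $f(\hat{x})=u_{k+1}=f^*$. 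The main obstacle I expect is precisely the basis-exchange step: without it, $\eta_*$ could in principle lag $f^*$ even after $X^k$ exhausts $\Omega$, because the PLP cuts are conditional on membership in $\cU^{\bi}$ and need not be tight at $\tilde{x}$ unless $\tilde{x}$ actually sits inside some poised subset of $X^k$.
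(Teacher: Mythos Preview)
Your proof is correct in its essentials but works considerably harder than the paper's. The paper dispatches correctness with a two-line contradiction: suppose the loop exits at iteration $k'$ with $f(\hat x)>f(x^*)$; then $x^*\notin X^{k'}$, and \lemref{underEst} gives $l_{k'}\le\eta(x^*)\le f(x^*)<f(\hat x)=u_{k'}$, contradicting the exit condition $l_{k'}\ge u_{k'}$. Finite termination then follows because $\Omega$ is finite and each iteration evaluates a fresh point. In other words, the paper only needs that the PLP is \emph{loose} at the single unevaluated optimum, whereas your Steinitz-exchange argument establishes the much stronger fact that the PLP is \emph{tight} at every already-evaluated point. Your exchange lemma is sound and is interesting in its own right (it makes precise the paper's informal remark after the theorem that ``the lower bound at each point in $\Omega$ will be equal to the function value at that point''), but it is not needed for \thmref{convergence}.

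There is also a small omission in your argument: you only analyze the branch in which the loop runs all the way to $X^k=\Omega$, but the while test could succeed at some earlier $k'$. In that case you still need to certify $u_{k'}=f^*$; this follows immediately from $l_{k'}\ge u_{k'}$ together with $l_{k'}\le f^*\le u_{k'}$ (the first inequality by \lemref{underEst}, the second because $u_{k'}$ is a minimum of $f$ over a subset of $\Omega$), but you should say so explicitly. Both your proof and the paper's gloss over the same off-by-one issue with the order of \linerefa{break_out_of_while} and \linerefa{select_next_x} when $X^k=\Omega$; that is a quirk of the pseudocode rather than a flaw in either argument.
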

\begin{proof}
  \algref{framework} will terminate in a finite number of iterations
  because \asref{1} ensures that $\Omega$ is finite and \linerefa{select_next_x}
  ensures that $x^k$ is not a previously evaluated element of $\Omega$.

  For contradiction, assume that \algref{framework} terminates at iteration $k'$
  with $f(\hat x) > f(x^*)$ for some $x^* \ds \in \argmin_{x \in \Omega} f(x)$.
  It follows from \linerefa{updateF} that $x^* \notin X^{k'}$, because 
  $f(x^*)<f(\hat x)$.
  \lemref{underEst} ensures that the value of each conditional cut
  at $x^*$ is not larger than $f(x^*)$, which implies
  that $\eta(x^*)\leq f(x^*)$. Thus, the lower bound satisfies
  \[ l_{k'} \leq f(x^*) < f(\hat x) = u_{k'} . \]
  Since $l_{k'} < u_{k'}$, \algref{framework} did not terminate at iteration $k'$,
  giving a contradiction. 
  Therefore, the result is shown.
\end{proof}

A special case of \thmref{convergence} ensures that \algref{framework} terminates with
a global solution of \eqref{eq:MIP-DFO} when $x^k$ is an optimal solution of
\eqref{model:PLP}.
As in most integer optimization algorithms, the termination
condition $l_{k}=u_{k}$ may be met before $X^{k} = \Omega$; that is, \algref{framework}
need not evaluate all the points in $\Omega$. Termination before $X^k = \Omega$ occurs 
when \eqref{model:PLP} is refined in Step 4 of \algref{framework} and the lower
bound at all points (and on the optimal value of \eqref{eq:MIP-DFO}) is
tightened. 
When the next iterate is chosen, the upper bound typically improves
and convergence occurs faster than enumeration.
Yet, in the worst-case scenario, when
cuts are unable to refine \eqref{model:PLP}, the algorithm will evaluate all
of $\Omega$. The lower bound at each point in $\Omega$ will be equal to the
function value at that point, which will imply $l_{k}=u_{k}=\min_{x \in \Omega} f(x)$ for
\eqref{model:PLP} (ensuring termination of the algorithm).

\section{Implementation Details}\label{sec:implementing} 
\algref{framework} relies critically on the underestimator described by
\eqref{model:PLP}. \secref{PLP} develops two approaches for formulating
\eqref{model:PLP}, and \secref{details} discusses important
details for efficiently implementing \algref{framework}. \secref{shucil}
combines these details in a 
description of our preferred method for solving \eqref{eq:MIP-DFO}, 
\code{SUCIL}.

\subsection{Formulating \eqref{model:PLP}}\label{sec:PLP}
We present two methods for encoding 
\eqref{model:PLP} and thereby obtain lower bounds for \eqref{eq:MIP-DFO}. The
first approach formulates \eqref{model:PLP} as a mixed-integer linear program
(MILP) using binary variables to indicate when a point $x$ is in $\cU^{\bi}$ for some multi-index $\bi$. 
Unfortunately, the resulting MILP
is difficult to solve for even small problem instances. This motivates the
development of the second approach, which
directly builds an enumerative model of \eqref{model:PLP} in the
space of the original variables only.

\subsubsection{MILP Approach}\label{sec:tols} 
Formulating \eqref{model:PLP} as an MILP requires forming the
secant function 
$m^{\bi}(x) = (c^{\bi})^\T x + b^{\bi}$ 
corresponding to each multi-index $\bi \in W(X)$. Since $m^{\bi}$ is 
valid only in $\cU^{\bi}$ (see \lemref{underEst}), we use binary variables to encode when $x \in
\cU^{\bi}$. Explicitly, for each $\bi \in W(X)$ and each $i_j \in \bi$, our MILP model 
sets the binary
variable $z^{{i}_j}$ to be 1 if and only if $x \in \conep{x^{i_j} - X^{\bi}}$.
Although the forward implication can be easily
modeled by using continuous variables
$\lambda^{i_j}$, we must introduce additional binary
variables $w^{i_j}$ for the reverse implication.

We now describe the
constraints in the MILP model. The first set of constraints ensures that
$\eta$ is no smaller than any of the
conditional cuts that underestimate $f$:
\begin{equation}\label{eq:cuts}
\eta \geq (c^{\bi})^\T x + b^{\bi} - M_{\eta} \left(1-\sum_{j=1}^{n+1}z^{i_j}\right),
\quad \forall \bi \in W(X),
\end{equation}
where $M_{\eta}$ is a sufficiently large constant.
By \lemref{only_one_cone}, we can add constraints to ensure that $x \in \Omega$
belongs to no more than one of the cones in $\cU^{\bi}$ for a given $\bi$:
\begin{equation}\label{eq:SOScone}
  \sum_{j = 1}^{n+1}z^{i_j} \le 1, \; \forall \bi \in W(X).
\end{equation}
The following constraints define each point $x \in \Omega$ as a linear
combination of the extreme rays of each $\conep{x^{i_j}-X^{\bi}}$:
\begin{equation}\label{eq:xInCone}
  x = x^{i_j} + \sum_{l=1, l \neq j}^{n+1}
     \lambda^{i_j}_l \left(x^{i_j} - x^{i_l}\right),
     \quad \forall \bi \in W(X),
     \;\forall i_j \in \bi.
\end{equation}
To indicate that $x
\in \conep{x^{i_j}-X^{\bi}}$, 
the following constraints enforce a lower bound of 0 on
$\lambda$ when the corresponding $z^{i_j}=1$:
\begin{equation}\label{eq:lambdaLb}
  \lambda^{i_j}_l \geq -M_\lambda \left(1-z^{i_j}\right),
    \quad \forall \bi \in W(X),
    \; \forall i_j,i_l \in \bi, j \neq l,
\end{equation}
where $M_{\lambda}$ is a sufficiently large constant. 
Next, we introduce the binary variables $w_l^{i_j}$ that are 1 when the corresponding
variable $\lambda_l^{i_j}$ is nonnegative.
The following constraints model the condition that $w_l^{i_j}=0$ implies that the corresponding
$\lambda_l^{i_j}$ takes a negative value:
\begin{equation}\label{eq:lambdaUb}
  \lambda^{i_j}_l \leq -\epsilon_\lambda + M_\lambda w^{i_j}_l,
    \quad \forall \bi \in W(X),
    \; \forall i_j,i_l \in \bi, j \neq l,
\end{equation}
where $\epsilon_\lambda$ is a sufficiently small positive constant.
The last set of constraints force at least one of the $w$ variables to be
0 if the corresponding $z$ is 0:
\begin{equation}\label{eq:zw}
  nz^{i_j} \leq  \sum_{l=1, l \neq j}^{n+1} w^{i_j}_l \leq n - 1 + z^{i_j},
     \quad \forall \bi \in W(X),
     \;\forall i_j \in \bi.
\end{equation}
The full MILP model encoding of \eqref{model:PLP} is
\begin{equation} \label{model:CPFull} \tag{CPF}
  \begin{aligned}
\dps \mini_{x,\lambda,z,w} \; & \eta \\
\st  \; & \dps \mbox{\eqref{eq:cuts}--\eqref{eq:zw}}\\
     \; & \dps w^{i_j}_l, z^{i_j} \in \{0,1\}, \quad \forall l,j \in
     \{1,\ldots, n+1\}, l \neq j; \quad \forall \bi \in W(X) \\
     \; & \dps x \in \Omega .
  \end{aligned}
\end{equation}

The constants $M_{\eta}, M_{\lambda}$, and $\epsilon_{\lambda}$ must be chosen
carefully in order to avoid numerical issues when solving \eqref{model:CPFull}; see \appref{tolerance}
for more details. In early numerical results, we
observed that taking large values for
$M_{\eta}$ and $M_{\lambda}$ and small values for $\epsilon_{\lambda}$ resulted in
numerical issues for the MILP solvers.
In an attempt to remedy
this situation, we derived cuts (also described in
\appref{tolerance}) in which $c^{\bi}$ and $b^{\bi}$ are integer valued.
One can then show, for example, that $1/\|c^{\bi}\|_2$
is a valid lower bound for $\epsilon_{\lambda}$, and similar tight bounds can be derived for
$M_{\lambda}$. With these tighter constants, some numerical issues were
resolved. Yet, the growth in the number of constraints in \eqref{model:CPFull}
prevented its application to problems with $n\geq3$.
\begin{figure}
\center
\subfloat{\includegraphics[height=2.1in]{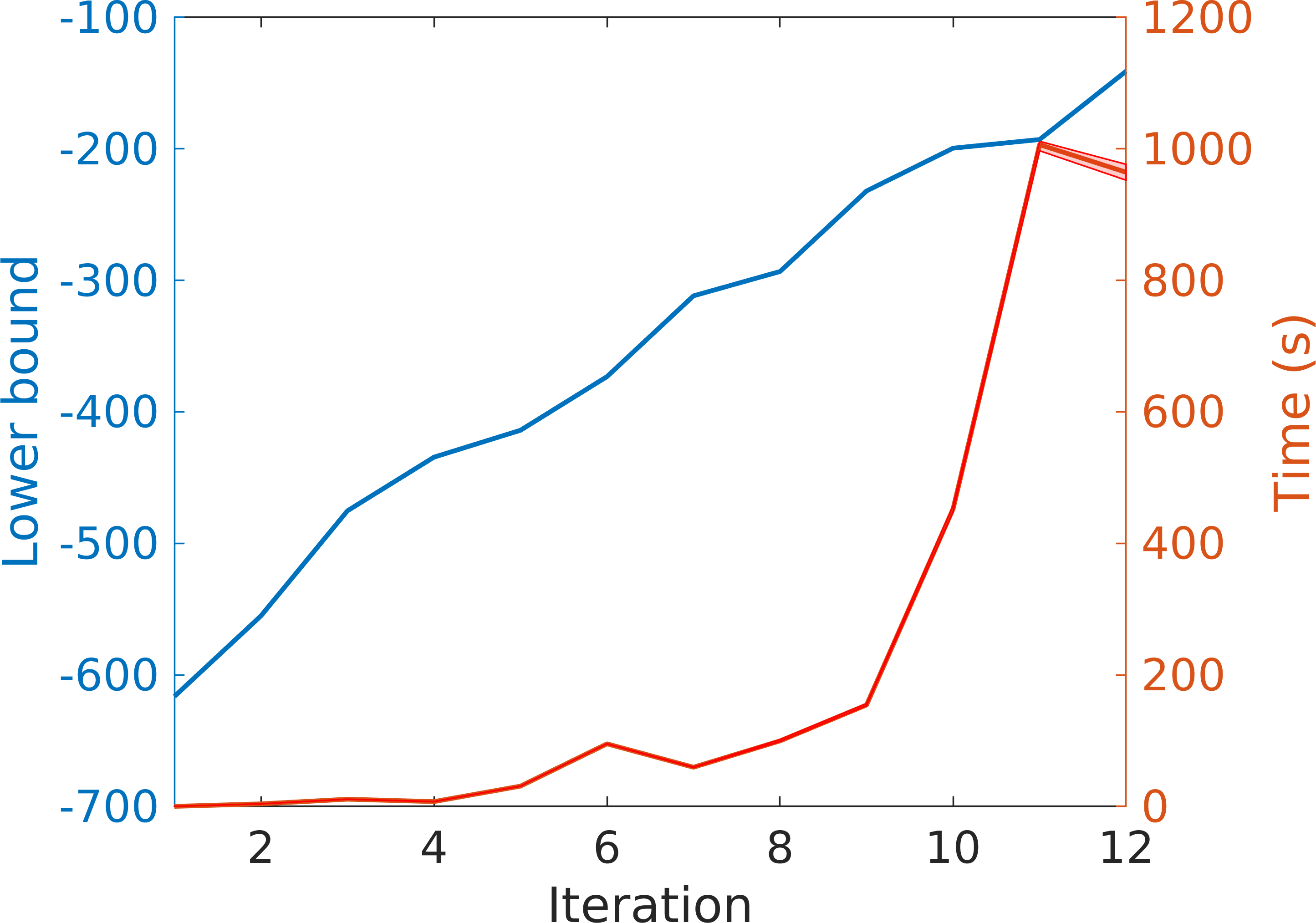}} 
\hfill
\subfloat{\includegraphics[height=2.1in]{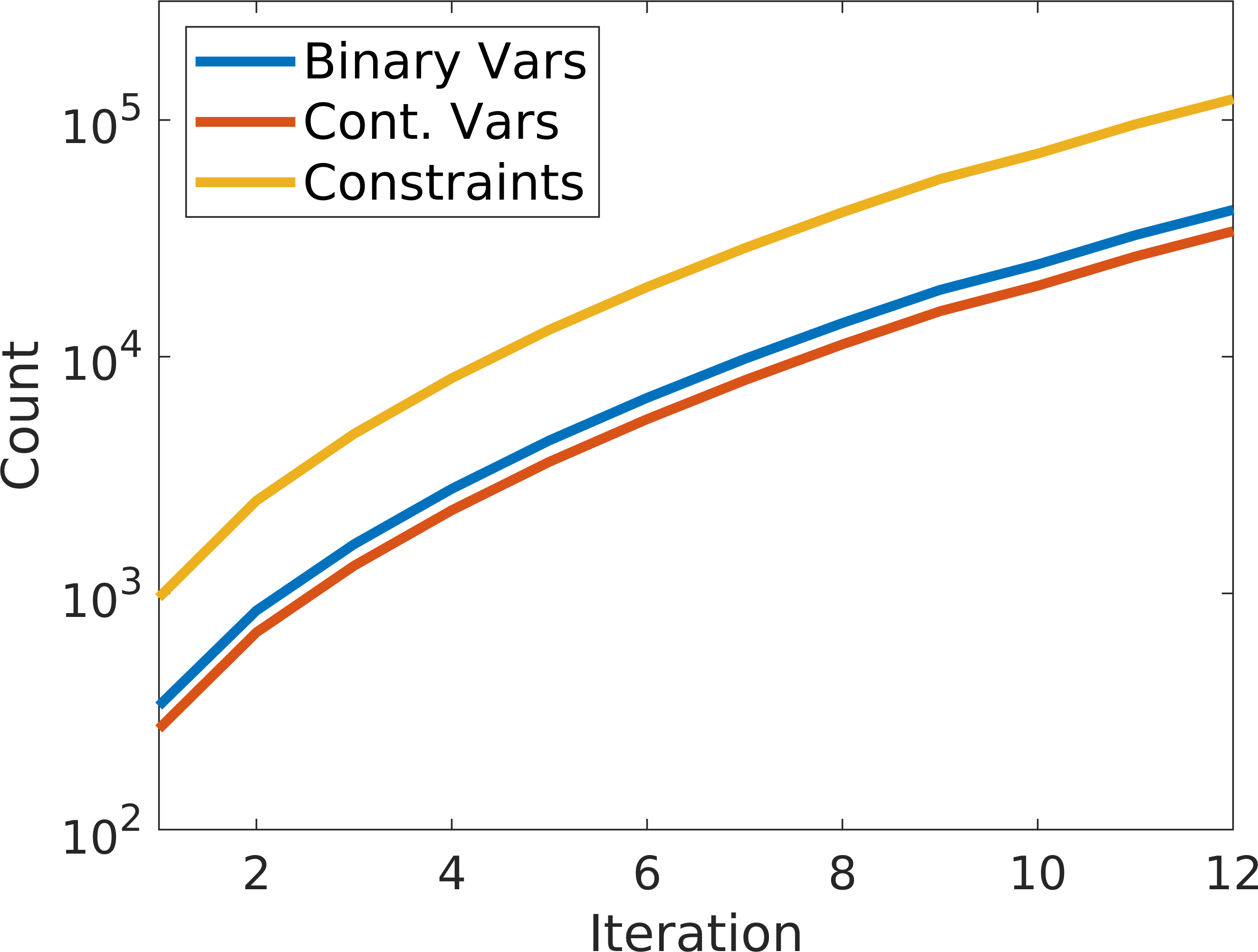}} 
\caption{Characteristics of the first 12 instances of \eqref{model:CPFull}
generated by \algref{framework} minimizing the convex quadratic 
\prob{abhi} on $\Omega=[-2,2]^3 \cap \Z^3$. Left shows the lower bound and solution
time (mean of five replications and maximum and minimum times are also shown); right shows the number of binary and 
continuous variables and constraints.
For further details of these 12 MILP models, see \tabref{mipExpo} in
\appref{mip}.
\label{fig:MIPExpo}}
\end{figure}

Initial versions of the MILP model \eqref{model:CPFull} resulted in large
times to solution.
\figref{MIPExpo} shows the behavior of
\algref{framework}---when adding all possible cuts when updating
\eqref{model:PLP} and choosing the next iterate be a minimizer of
\eqref{model:PLP}---when minimizing the convex quadratic function \prob{abhi}
(defined in \tabref{probs} of \appref{probs}) on $\Omega=[-2,2]^3 \cap \Z^3$.
We note that the variations in CPU time are consistent over five repeated
runs and vary by less than 2.4\% for the last two iterations.
The solution time of MILP solvers depends critically on implementation
features, including presolve operations, 
node selection rules, and branching preferences. 
After the additional set of cuts
(constraints) are introduced in iteration 12 of this problem instance, the MILP solver
was able to solve the problem in slightly less time than the
previous iteration. (Such occurrences are not rare in
MILPs: time to solution is not strictly increasing in problem size.)
Overall, we find that the growth in CPU time is due to the increasing number of
conditional cuts and the associated explosion in the number of binary and
continuous variables. This trend appears to limit the applicability of the MILP
approach. Note that the global minimum of \prob{abhi} on $[-2,2]^3 \cap \Z^3$
has not yet been encountered when the MILPs become too large to solve. (The
iteration 13 MILP was not solved in 30 minutes.) 

\subsubsection{Enumerative Approach}\label{sec:enum}

Whereas the MILP from \secref{tols} encodes information about 
every conditional cut in a single model,  
this section considers an alternative approach of updating the value of
$\eta(x)$ for each $x \in \Omega$ as new conditional cuts are encountered.
After the information from a new secant function is used to update
$\eta(x)$, the secant is discarded. 

Ordering the finite set of feasible integer points as
$\left\{ x^{1}, x^{2}, \ldots, x^{\left| \Omega \right|} \right\}$, 
our approach maintains and updates a vector of bounds 
\begin{equation} \label{eq:vec_of_bounds}
\left[ \eta(x^1), \eta(x^2), \ldots, \eta(x^{\left| \Omega \right|})
\right]^\T \in \R^{\left|
\Omega \right|},
\end{equation}
where $\eta(x^j)$ is the value of \eqref{model:PLP} when $x = x^j$.
The value of $\eta(x^j)$ is initialized to $-\infty$; and as each secant
is constructed, $\eta(x^j)$ is set to the maximum of its current value and
the value of the conditional cut at $x^{j}$. This procedure is 
described in \algref{update}.
Since the important information about each conditional cut will be stored in
$\eta(x)$, the secants defining each cut do not need to be stored. 
Furthermore, if $\eta_k(x)$ is the value of the underestimator \eqref{eq:vec_of_bounds} at iteration $k$, 
then solving each instance of \eqref{model:PLP}
corresponds to looking up $\ds \argmin_{j \in \{ 1,\ldots,\left| \Omega \right| \}}
\eta_k(x^j)$ (breaking ties arbitrarily). 
Similarly, termination of \algref{framework} requires testing only that 
$\ds \min_{ j \in \{ 1,\ldots,\left| \Omega \right|\}} \; \eta_k(x^j) \geq u_k$.

Note that when solving \eqref{eq:MIP-DFO}, updating
$\eta(x)$ for all $x \in \Omega$ is unnecessary. Rather, one needs to
update $\eta(x)$ only at points that could possibly be a global minimum of $f$
on $\Omega$. When $f$ is evaluated at $x^{k+1}$ and a multi-index $\bi \in
W(X^k \cup x^{k+1})$ is encountered that is not in $W(X^k)$, we update the
lower bound only at points in $\cU^{\bi}$ that are also in
\begin{equation}\label{eq:Omega_k_def}
  \Omega_k \defined \{ x \in \Omega \setminus X^k : \eta_k(x) < u_k \}.
\end{equation}
That is, we update $\eta_k(x)$ for points in $\cU_k^{\bi} = \Omega_k \cap
\cU^{\bi}$ for each newly encountered $\bi$. 

\begin{algorithm2e}
  \DontPrintSemicolon
\caption{Routine for updating lower bound for $f$ at each point in $\Omega$.\label{alg:update}}
  \SetAlgoNlRelativeSize{-5}
  \SetKwProg{Pn}{Function}{:}{return}
  \SetKwFunction{FunUp}{UpdateEta}
  \Pn{\FunUp{$X^{\bi}$,$b^{\bi}$,$c^{\bi}$,$\cU^{\bi}_k$,$\eta(x)$}}{
    \For{$i_k \in \bi$} {
      \For{$j=1,\ldots,\left| \Omega \right|$} {
        \If {$x^{j} \in \conep{x^{i_k}-X^{\bi}} \cap \cU^{\bi}_k$} {
          $\eta(x^j) \gets \max \left( \eta(x^j), (c^{\bi})^\T x^{j} + b^{\bi} \right)$
        }
      }
    }
  }
\end{algorithm2e}

\subsection{Other Implementation Details}\label{sec:details}
The enumerative approach of maintaining the value of the underestimator
$\eta(x)$ described in \secref{enum} avoids many of the computational
pitfalls of the MILP model discussed in \secref{tols}. Below, we discuss
additional computational enhancements that lead to an efficient implementation of 
\algref{framework} in conjunction with \algref{update}.

\subsubsection{Checking Whether $X^{\bi}$ Is Poised and Whether $x \in \cU^{\bi}$}\label{sec:checking_poised}
We now describe a numerically efficient representation of
$\conep{x^{i_j} - X^{\bi}}$ for $i_j \in \bi$. 
Given a poised set of $n+1$ points, $X^{\bi}$, for
each $i_j \in \bi$ we define a secant function satisfying 
\begin{align}
  (c^{i_j})^\T x^{i_l} + b^{i_j} &= 0, \text{ for all } i_l \in \bi, i_l \neq i_j, \mbox{ and }\label{eq:hyp1}\\
  (c^{i_j})^\T x^{i_j} + b^{i_j} &> 0.\label{eq:hyp2}
\end{align}
Only one such secant exists for each $i_j \in \bi$; however, the
representation of this secant is not unique since
$(c^{i_j},b^{i_j})$ are obtained by solving an underdetermined system of equations.
Given $(c^{i_j},b^{i_j})$ satisfying \eqref{eq:hyp1} and \eqref{eq:hyp2},
we define the corresponding halfspace,
\begin{equation}\label{eq:hypDef}
H^{i_j} \defined \{x:(c^{i_j})^\T x + b^{i_j} \leq 0\}.
\end{equation}
We now show that $\conep{x^j - X^{\bi}}$, defined in \eqref{eq:cone_def}, can
be represented as the intersection of $n$ such halfspaces.

\begin{lemma}[Set Equality]\label{lem:coneFacet}
  For a poised set $X^{\bi}$, $\conep{x^{i_j} -
  X^{\bi}} = F^{i_j} \defined \bigcap\limits_{i_l \neq i_j} H^{i_l}$ 
  for each $i_j \in \bi$.
\end{lemma}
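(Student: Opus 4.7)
The plan is to prove the two set inclusions $\conep{x^{i_j} - X^{\bi}} \subseteq F^{i_j}$ and $F^{i_j} \subseteq \conep{x^{i_j} - X^{\bi}}$ separately, using the affine independence of $X^{\bi}$ throughout. Before beginning, I would note that for each $i_l \in \bi$, the system \eqref{eq:hyp1} is a consistent underdetermined system (having $n$ linearly independent interpolation equations in $n+1$ unknowns) whose solution set is a one-dimensional subspace; exactly one of the two orientations of this subspace produces a $(c^{i_l}, b^{i_l})$ satisfying the strict inequality \eqref{eq:hyp2}, so $H^{i_l}$ is well defined.

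For the forward inclusion, I would take an arbitrary point $x = x^{i_j} + \sum_{l \neq j} \lambda_l (x^{i_j} - x^{i_l})$ with $\lambda_l \ge 0$ and verify that $(c^{i_k})^\T x + b^{i_k} \le 0$ for each $k \neq j$. Using \eqref{eq:hyp1} with $i_k$ as the active secant, all terms $(c^{i_k})^\T x^{i_l}$ with $l \neq k$ evaluate to $-b^{i_k}$, while the term with $l = k$ contributes $(c^{i_k})^\T (x^{i_j} - x^{i_k})$, which by \eqref{eq:hyp2} equals $-b^{i_k} - (c^{i_k})^\T x^{i_k} < 0$. Simplifying shows $(c^{i_k})^\T x + b^{i_k} = \lambda_k \bigl[(c^{i_k})^\T(x^{i_j} - x^{i_k})\bigr] \le 0$, so $x \in H^{i_k}$ for every $k \neq j$, and therefore $x \in F^{i_j}$.

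For the reverse inclusion, the key observation is that poisedness of $X^{\bi}$ (\defref{poised}) makes the $n$ vectors $\{x^{i_j} - x^{i_l}: l \neq j\}$ a basis of $\mathbb{R}^n$, so any $y \in F^{i_j}$ admits a \emph{unique} representation $y = x^{i_j} + \sum_{l \neq j} \lambda_l (x^{i_j} - x^{i_l})$. It then remains to show $\lambda_l \geq 0$ for each $l \neq j$. Applying $(c^{i_k})^\T(\cdot) + b^{i_k}$ to both sides and invoking \eqref{eq:hyp1} (which collapses all terms with $l \neq k$) reduces to the identity $(c^{i_k})^\T y + b^{i_k} = \lambda_k (c^{i_k})^\T(x^{i_j} - x^{i_k})$. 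Since $y \in H^{i_k}$ gives a nonpositive left-hand side and since the bracketed quantity on the right is strictly negative by the same computation as above, we conclude $\lambda_k \ge 0$. Running this argument for every $k \neq j$ yields $y \in \conep{x^{i_j} - X^{\bi}}$.

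The main obstacle is purely bookkeeping: keeping straight which indices are the "active" secant index $k$ versus the ranging index $l$ in the cone representation, and getting the sign conventions from \eqref{eq:hyp1}--\eqref{eq:hyp2} to line up so that each $\lambda_k$ is extracted cleanly. Once that bookkeeping is organized, both inclusions collapse into a single computation, $(c^{i_k})^\T x + b^{i_k} = \lambda_k (c^{i_k})^\T(x^{i_j} - x^{i_k})$, read in two directions; no further machinery beyond poisedness is required.
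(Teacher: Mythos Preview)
Your proposal is correct and follows essentially the same approach as the paper: both directions hinge on the identity $(c^{i_k})^\T x + b^{i_k} = -\lambda_k\bigl((c^{i_k})^\T x^{i_k} + b^{i_k}\bigr)$, obtained by expanding the cone representation and collapsing terms via \eqref{eq:hyp1}. The only cosmetic difference is that the paper phrases the reverse inclusion as a contrapositive (if some $\lambda_l<0$ then \eqref{eq:hypDef} is violated), whereas you argue it directly; the underlying computation is identical.
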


\begin{proof}
  Let $\bi$ be given and $i_j \in \bi$ fixed.
  We first show that $\conep{x^{i_j} - X^{\bi}} \subseteq F^{i_j}$ by showing
  that an arbitrary $x \in \conep{x^{i_j} - X^{\bi}}$ satisfies
  \eqref{eq:hypDef} for each $i_l \in \bi, i_l \neq i_j$.
  Given $(c^{i_l},b^{i_l})$
  satisfying \eqref{eq:hyp1} and \eqref{eq:hyp2}, then using the definition \eqref{eq:cone_def} yields
  \begin{align*}
    (c^{i_l})^\T x + b^{i_l}
    &= (c^{i_l})^\T \left(x^{i_j} + \sum_{k=1, k\neq j}^{n+1} \lambda_k (x^{i_j} - x^{i_k})\right) +  b^{i_l}\\
    &= (c^{i_l})^\T x^{i_j} +  b^{i_l} + \sum_{k=1, k\neq j}^{n+1} \lambda_k (c^{i_l})^\T x^{i_j}
    - \sum_{k=1, k\neq j}^{n+1} \lambda_k (c^{i_l})^\T  x^{i_k} \\
    &= 0 + \sum_{k=1, k\neq j}^{n+1} \lambda_k \left ( (c^{i_l})^\T x^{i_j} + b^{i_l} \right )
    - \sum_{k=1, k\neq j}^{n+1} \lambda_k \left ( (c^{i_l})^\T  x^{i_k} + b^{i_l} \right )\\
    &= 0 - \sum_{k=1, k\neq j}^{l-1} \lambda_k \left ( (c^{i_l})^\T  x^{i_k} + b^{i_l} \right )
    - \lambda_l \left( (c^{i_l})^\T x^{i_l} +  b^{i_l} \right)
    - \sum_{k=l+1, k\neq j}^{n+1} \lambda_k \left ( (c^{i_l})^\T  x^{i_k} + b^{i_l} \right )\\
    &= - \lambda_l \left( (c^{i_l})^\T x^{i_l} +  b^{i_l} \right) \le 0,
  \end{align*}
  where we have used \eqref{eq:hyp1} in the last three equations.
  The final inequality holds because $\lambda_l \geq 0$ by \eqref{eq:cone_def}
  and $(c^{i_l})^\T x^{i_l} +  b^{i_l} > 0$ by \eqref{eq:hyp2}. Because $i_l$ is
  arbitrary, it follows that any $x$ in $\conep{x^{i_j} - X}$ is also in $F^{i_j}$.

  We now show that $F^{i_j} \subseteq \conep{x^{j} - X^{\bi}}$ by contradiction.
  If $x \notin \conep{x^{i_j} - X^{\bi}}$ for a set of $n+1$ poised points
  $X^{\bi}$, then $x$ can be represented as 
  $x^{i_j} +\ds \sum_{l=1, l\neq j}^{n+1} \lambda_l \left( x^{i_j} - x^{i_l}
  \right)$ only with some $\lambda_{l} < 0$. Thus, \eqref{eq:hypDef} is
  violated for some $l$, and hence $x \notin F^{i_j}$.
\end{proof}

\lemref{coneFacet} gives a representation of each $\conep{x^{i_j} -
X^{\bi}}$ involving $n$ halfspaces that differs from $\conep{x^{i_l} -
X^{\bi}}$ for $i_l\in \bi$, $i_l \neq i_j$ in only one component.
Therefore, we can represent $\cU^{\bi}$ via only $n+1$ halfspaces.
We efficiently calculate these halfspaces by utilizing the
QR factorization 
$ 
  \left[ Q^{\bi} \; R^{\bi} \right] =
  \left[ \bar{X}^{\bi} \; e \right]^\T$.
If $R^{\bi}$ has positive diagonal entries, then the multi-index
$\bi$ corresponds to a poised set $X^{\bi}$. 
The coefficients in each $(c^{i_j},b^{i_j})$ can be obtained by
updating $Q^{\bi}$, $R^{\bi}$ by deleting the corresponding column from $\left[
\bar{X}^{\bi} \; e \right]^\T$.
The sign of $(c^{i_j},b^{i_j})$ can be changed in order to ensure that
\eqref{eq:hyp2} holds.

\subsubsection{Approximating $W(X^k \cup \{x^{k+1}\})$}
The use of $\eta_k(x)$ to store the lower bound at each $x \in \Omega_k$ allows
us to avoid encoding all secants in $W(X^k)$. After $f$ has been evaluated
at a new point $x^{k+1}$, constructing the tightest possible underestimator in
$\eta_k$ requires considering multi-indices $\bi$ in $W(X^k \cup \{x^{k+1}\})$ that
contain $x^{k+1}$. (Combinations not containing $x^{k+1}$ have already been
considered in previous iterations.)
While not storing secants is significantly more computationally efficient
than encoding and storing all secants in $W(X^k)$, it still results in
checking the poisedness of prohibitively many sets of $n+1$ points. For
example, if $\left| X^k \right| = 100$ and $n=5$, over 75 million
QR factorizations must be performed, as discussed in \secref{checking_poised}. 

Therefore, as an alternative, we seek a small, representative subset of multi-indices of
$W(X^{k})$ by identifying a subset of points that will yield the best
conditional cuts.
\begin{definition}\label{def:generators}
  Let $\bar{W}_k$ be the set of multi-indices in $W(X^k)$ that define the
  largest lower bound at some point in $\Omega_k$ (defined in
  \eqref{eq:Omega_k_def}). That is,
  $\bar{W}_k \defined \{ \bi : \exists x \in \Omega_k \text{ such that } \eta_k(x) = m^{\bi}(x) \}$.
  We denote to the \emph{generator} set of points as
  $G^k \defined \{x^j : \exists \bi \in \bar{W}_k \text{ such that } j \in \bi
  \}$.
\end{definition}
Hence, $G^k$ contains points that define $\eta_k(x)$ for at least one $x \in
\Omega_k$. Using $W(G^k)$ in place of $W(X^k)$ does relax \eqref{model:PLP}, yet
the lower bounding property of \eqref{model:PLP} still remains. We show
below that this change does not
affect the finite termination property of \algref{framework} provided at least one
cut is added for every new $x^{k+1}$. 

\begin{figure}
  \begin{center}
    \includegraphics[width=0.45\linewidth]{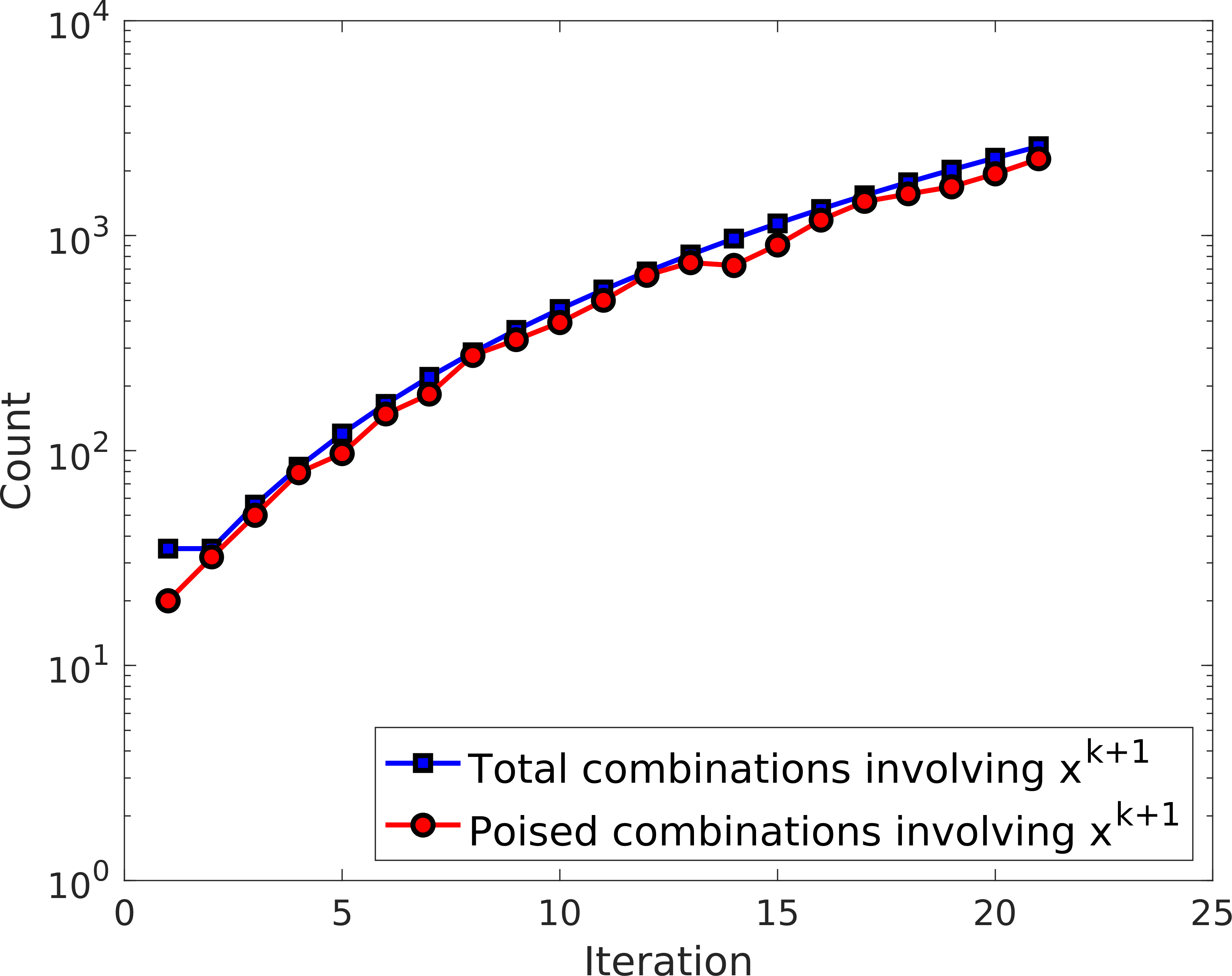}
    \includegraphics[width=0.45\linewidth]{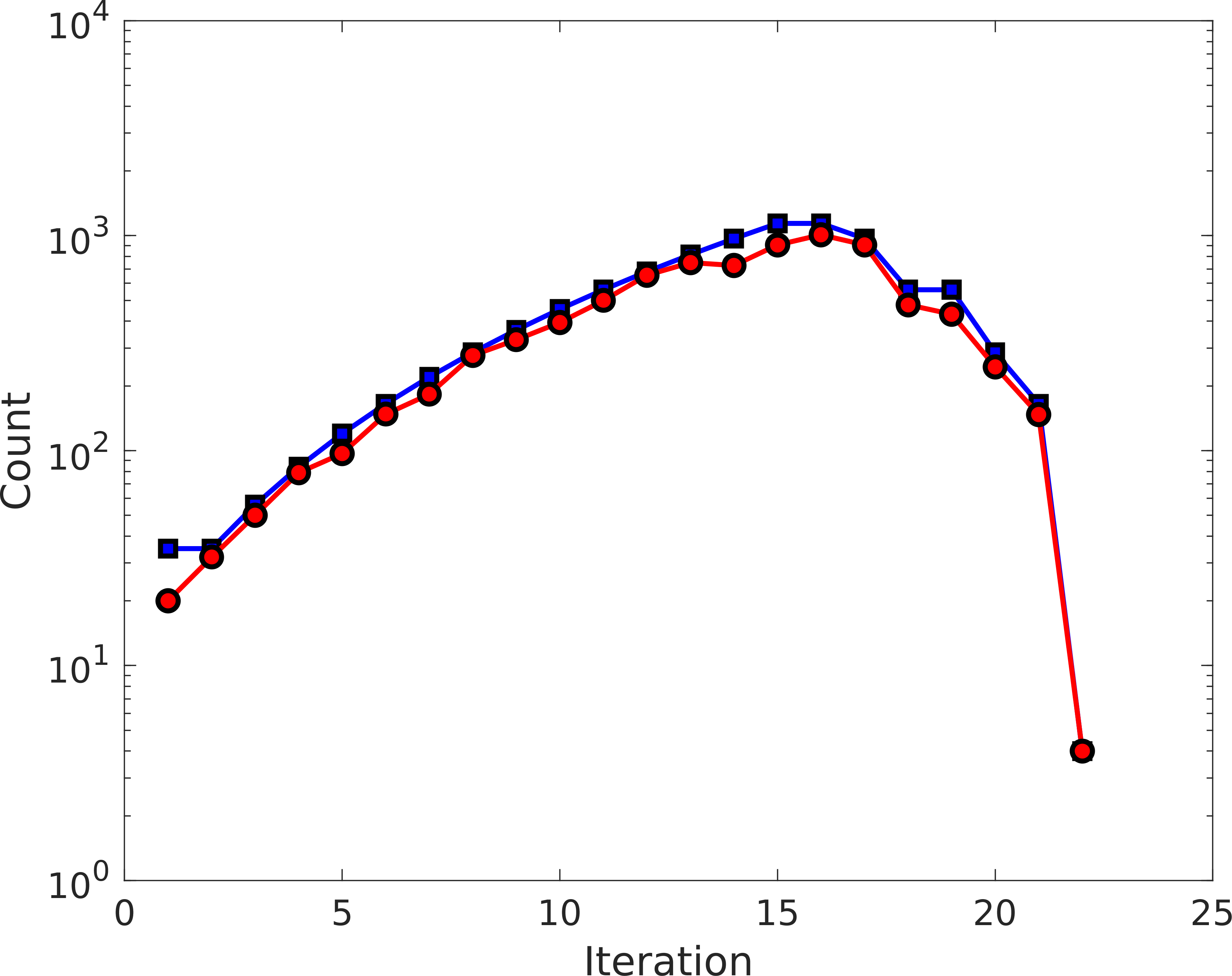}
    \caption{Number of total combinations and poised combinations that include 
    $x^{k+1}$ in $W(X^k)$ (left) and $W(G^k)$ (right) when minimizing
    \prob{quad} on $\Omega = [-4,4]^3 \cap \Z^3$.\label{fig:allVsGen}}
  \end{center}
\end{figure}

\figref{allVsGen} compares the growth of the number of subsets of indices that
must be considered when determining whether a multi-index $\bi$ is poised or
not when using \algref{framework} to minimize \prob{quad} (\tabref{probs} in \appref{probs}) on 
$\Omega = [-4,4]^3 \cap \Z^3$.
Preliminary numerical experiments showed that although a high percentage of
all combinations in $W(X^k \cup x^{k+1})$, which involve the new iterate
$x^{k+1}$ at an iteration $k$, are poised, only a small fraction of these
actually update the lower bound at any point in $\Omega_k$ (we elaborate more
on this in \secref{chall}).

\subsubsection{Selecting $x^{k+1}$}
Early experiments with our algorithm showed that it 
spent 
many early
iterations evaluating points at the boundary of $\Omega$. 
Although \secref{covering} provides a method for ensuring that all $x \in \Omega$
are bounded by at least one conditional cut, the solution to \eqref{model:PLP}
is often at the boundary of $\Omega$. Rather than moving so far from a
candidate solution,
we consider a trust-region approach to keep iterates close
to the current incumbent. 
As long as we maintain a lower bound for $f$ on $\Omega$, the convergence
proof in \thmref{convergence} does not depend on $x^{k+1}$ being the global
minimizer of our lower bound.

In practice, we use an infinity-norm trust region and set the minimum
trust-region radius, $\Delta_{\min}$ to 1. At iteration $k$, the maximum
radius that must be considered is 
$\max_{x,y \in \Omega_k, x \neq y} \left\| x - y \right\|_\infty$.

\subsection{The \code{SUCIL} Method}\label{sec:shucil}
We now present the \code{SUCIL} 
method for obtaining global solutions to
\eqref{eq:MIP-DFO} under \asref{1}.
The algorithm using the trust-region step is shown in \algref{lookup}.
We observe that \algref{lookup}
maintains a valid lower bound $\eta_k(x)$ at every point, $x \in \Omega_k$, and
that the trust-region mechanism ensures that the algorithm terminates only when 
the lower bound equals the best observed function value.

\begin{algorithm2e}[h!]
    \DontPrintSemicolon
    \caption{\code{SUCIL}: secant underestimator of convex functions on the integer lattice.\label{alg:lookup}} 
    \SetAlgoNlRelativeSize{-5}
    \SetKw{true}{true}
    \SetKw{break}{break}
    \SetKwInput{Input}{Input}
    \SetKwInOut{Output}{Output}
    \SetKwFunction{FunUp}{UpdateEta}
    \SetKwFunction{FunTR}{NewIterateTR}
    \Input{A set of evaluated points ${X^0} \subseteq \Omega$ : $\bigcup\limits_{\bi \in W(X^0)} \cU^{\bi} = \R^n$ and trust-region radius lower bound $\Delta_{\min}\geq 1$}
    Set $\hat{x} \in \dps \argmin_{x \in X^{0}} f(x)$,
    upper bound $\ds u_0 \gets f(\hat{x})$, 
    $\Omega_0 \gets \Omega$,
    and $k\gets 0$\\ 
    Initialize lower bounding function $\eta_{-1}(x)\gets -\infty$ for all $x\in \Omega$; set lower bound $l_0 \gets -\infty$ \\
    \While{$l_k < u_k$}
          {
            \underline{\textsf{Update:}} \\
            Generate $G^k$ (according to \defref{generators}) using $X^k$\\
            \For{$\bi \in W(G^k)$}
                {
                  Compute $QR$ factors: $[Q,R] \gets \operatorname{qr}([e\; X^{\bi}])$\\
                  \If{$X^{\bi}$ is poised}
	             {
	               Find coefficients $c^{\bi}, b^{\bi}$ and form set $\cU^{\bi}_k \gets \Omega_k \cap \cU^{\bi}$ using $QR$ factors\\
                       Update look-up: $\eta_k \gets $ \FunUp{$X^{\bi}$,$b^{\bi}$,$c^{\bi}$,$\cU^{\bi}_k$,$\eta_{k-1}$}; see \algref{update}
	             }
                }
            \underline{\textsf{Lower Bound:}} \\
            $l_{k+1} \gets \dps \min_{x \in \Omega_k}{\eta_k(x)}$ from look-up table\\
	    \If{$l_{k+1}=u_k$}{\break}
            \underline{\textsf{Next Iterate:}} \\
            Update $\Omega_{k} \gets \{ x \in \Omega \setminus X^k : \eta_k(x) < u_k \}$
            \label{line:select_next_x1}\\
        \eIf{$\left\{x \in \Omega_k: \| x - \hat{x} \| \leq \Delta_k\right\} = \emptyset$}
            {
              Increase trust-region radius: $\Delta_k \gets \Delta_k +1$ until 
$\left\{x \in \Omega_k: \| x - \hat{x} \| \leq \Delta_k\right\} \neq \emptyset$ \label{line:TR}
     }{
	      Set $x^{k+1} \in \ds \argmin_{x \in \Omega_k: \| x - \hat{x} \| \leq \Delta_k} \eta_k(x)$\\
	      Evaluate $f(x^{k+1})$ and set $X^{k+1}\gets X^{k}\cup \{x^{k+1}\} $\\
              \eIf{$f(x^{k+1}) < u_k$}
                  {
		    \underline{\textsf{Upper Bound:}} \\
		    New incumbent $\hat{x} \gets x^{k+1}$ and upper bound $u_{k+1} \gets f(x^{k+1})$\\
         	    Increase trust-region radius $\Delta_{k+1} \gets \Delta_k +1$  \label{line:updateF2}
                  }{
                    No progress: $u_{k+1} \gets u_{k}$ and reduce trust-region radius
                    $\Delta_{k+1} \gets \max \left\{\Delta_{\min},\frac{\Delta_k}{2}\right\}$
                  }
            }
           $k \gets k+1$
}
           \Output{$\hat{x}$, a global minimizer of $f$ on $\Omega$}
\end{algorithm2e}

We note that $G^k$ may not be a subset of $G^{k+1}$, because $\Omega_k$
can contain fewer points as the upper and lower bounds on $f$ are improved. 
However, the following generalization of \thmref{convergence} ensures that
\algref{lookup} still returns a global minimizer of \eqref{eq:MIP-DFO}. 
\begin{theorem}[Convergence of \algref{lookup}]\label{thm:convergence1}
If \asref{1} holds and if $W(G^k)$ includes at least one cut for every $x \in \Omega_k$,
then \algref{lookup} terminates at an
optimal solution $x^*$ of \eqref{eq:MIP-DFO} in finitely many iterations.
\end{theorem}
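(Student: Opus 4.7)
The plan is to mirror the contradiction argument of \thmref{convergence}, adapting it to the two new ingredients of \algref{lookup}: only the restricted multi-index set $W(G^k) \subseteq W(X^k)$ is used when forming cuts, and the trust-region mechanism allows an iteration to adjust $\Delta_k$ without sampling $f$. The hypothesis that $W(G^k)$ supplies at least one valid cut for every $x \in \Omega_k$ is precisely what preserves the lower-bound guarantee that made the original argument work.

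First I would dispose of finite termination. Classify each iteration as either an \emph{evaluation iteration} (the \textbf{else} branch is taken, a fresh $x^{k+1}$ is selected, and $X^{k+1} = X^k \cup \{x^{k+1}\}$) or an \emph{expansion iteration} (the \textbf{if} branch at \linerefa{TR} is taken and only $\Delta_k$ is modified). Evaluation iterations are limited to at most $|\Omega|-|X^0|$ occurrences, since \asref{1} makes $\Omega$ finite and the restriction $x^{k+1} \in \Omega_k \subseteq \Omega \setminus X^k$ prevents revisits. For expansion iterations, $\Delta_k$ is increased in integer steps and bounded above by the finite $\ell_\infty$-diameter of $\Omega$, so the inner expansion loop halts after finitely many steps; moreover, because no evaluation took place, $X^{k+1}=X^k$ and $\Omega_{k+1}=\Omega_k$, so the trust region enlarged in iteration $k$ still contains the point that made it nonempty when iteration $k+1$ begins, forcing an evaluation iteration to follow. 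This caps the total iteration count.

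Next I would establish optimality by contradiction. Suppose \algref{lookup} halts with incumbent $\hat x$ while there exists $x^*\in\argmin_{x\in\Omega}f(x)$ with $f(x^*)<f(\hat x)$. Since the upper-bound update in the successful-step branch preserves $u_k=\min_{x\in X^k}f(x)$, the inequality $f(x^*)<u_k=f(\hat x)$ forces $x^*\notin X^k$. By hypothesis, some $\bi\in W(G^k)$ satisfies $x^*\in\cU^{\bi}$, and \lemref{cone} then yields $m^{\bi}(x^*)\le f(x^*)$. Because \algref{update} stores at $\eta_k(x^*)$ the pointwise maximum over all valid cuts encountered so far---a quantity that is monotone nondecreasing in $k$ since earlier cut contributions are never removed---we conclude $\eta_k(x^*)\le f(x^*)<u_k$, so $x^*\in\Omega_k$. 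It follows that
\[
l_{k+1} \;=\; \min_{x\in\Omega_k}\eta_k(x) \;\le\; \eta_k(x^*) \;\le\; f(x^*) \;<\; u_k,
\]
contradicting both the \textbf{break} condition $l_{k+1}=u_k$ and the while-loop exit $l_{k+1}\ge u_{k+1}$ (via $u_{k+1}\le u_k$). Hence $\hat x$ is a global minimizer.

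The hard part will be the bookkeeping around the restricted generator set and the trust region: I must confirm that even though in general $G^k\not\subseteq G^{k+1}$, the cut values folded into $\eta_k$ in earlier iterations are preserved by \algref{update}, and that the hypothesis on $W(G^k)$ genuinely prevents $\eta_k(x^*)=-\infty$. Once these two facts---together with the claim that expansion iterations cannot chain without an intervening evaluation---are in hand, the remainder is a straightforward transcription of \thmref{convergence}, with $W(G^k)$ playing the role previously played by $W(X^k)$.
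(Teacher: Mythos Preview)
Your proposal is correct and follows the same contradiction template as the paper, but it is considerably more detailed than what the paper actually does. The paper's proof is only four sentences: it dispatches finite termination in one line (``$\Omega$ is bounded and \linerefa{select_next_x1} ensures that $x^k$ is not a previously evaluated element of $\Omega$''), then observes that $W(G^k)\subset W(X^k)$ so $\eta_k$ remains a valid lower bound, and finally notes that the trust-region expansion in \linerefa{TR} guarantees termination only when $l_{k+1}=u_k$. It does not separate evaluation versus expansion iterations, nor does it explicitly re-run the contradiction against a putative $x^*$ as you do---it simply points back to the machinery of \thmref{convergence}.

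Two small remarks on your bookkeeping. First, you slightly overstate the role of the coverage hypothesis: the inequality $\eta_k(x^*)\le f(x^*)$ already holds by \lemref{cone} for every cut that was ever folded into $\eta_k$, regardless of whether $W(G^k)$ covers $x^*$; even $\eta_k(x^*)=-\infty$ would still give $l_{k+1}<u_k$ and block termination. The hypothesis (and the input requirement $\bigcup_{\bi\in W(X^0)}\cU^{\bi}=\R^n$) is really about making $l_{k+1}$ finite so the termination test can eventually succeed, not about preserving the underestimation property. Second, your careful treatment of the expansion branch---that it cannot recur without an intervening evaluation---fills in a detail the paper leaves implicit in its one-line appeal to \linerefa{TR}; this is sound and worth keeping.
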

\begin{proof}
  \algref{lookup} will terminate in a finite number of iterations
  because $\Omega$ is bounded and \linerefa{select_next_x1}
  ensures that $x^k$ is not a previously evaluated element of $\Omega$.
  Because $W(G^k) \subset W(X^k)$, it follows that $\eta_k(x)$ is a valid lower
  bound for $f$ on $\Omega$, and the trust-region mechanism in \linerefa{TR}
  ensures that \algref{lookup} terminates only if $l_{k+1} = u_k$. 
  Therefore, the result is shown.
\end{proof}

\section{Numerical Experiments}\label{sec:numerics}
We now describe numerical experiments performed on multiple versions of
\code{SUCIL}; see \tabref{sucils}.
These methods differ in how $x^{k+1}$ is selected and in the set of points used
within \eqref{model:PLP}.
The last two methods are idealized because they assume access to the true
function value at every point in $\Omega_k$. They are included in order to
provide a best-case performance for a \code{SUCIL} implementation.
In the numerical experiments to follow, we set $\Delta_{\min} \gets 1$ in
\algref{lookup} and use an infinity-norm trust region. All \code{SUCIL}
instances begin by evaluating the starting point $\bar{x}$ and $\left\{
\bar{x} \pm e_1, \ldots, \bar{x} \pm e_n \right\}$ ensuring a finite lower bound at
every point in $\Omega$.

\begin{table}[t]
  \begin{minipage}[b]{0.53\linewidth}
    \small
    \centering
  \begin{tabular}{c|c|c}
    Method & $X$ in \eqref{model:PLP}? & $x^{k+1} =$? \\ \hline \hline
    \code{SUCIL}      & $G^k$ & $\dps  \argmin_{x \in \Omega_k: \| x - \hat{x} \|_{\infty} \leq \Delta_k} \eta_k(x)$\\\hline
    \code{SUCIL-noTR} & $G^k$ & $\dps  \argmin_{x \in \Omega_k} \eta_k(x)$ \\\hline
    \code{SUCIL-ideal1} & $X^k$ & $\dps   \argmin_{x \in \Omega \setminus X^k} f(x)$ \\\hline
    \code{SUCIL-ideal2} & $G^k$ & $\dps   \argmin_{x \in \Omega \setminus X^k} f(x)$ \\\hline
  \end{tabular}
  \caption{Description of how \code{SUCIL} versions choose $X$ in 
  \eqref{model:PLP} and the next iterate
  $x^{k+1}$ (breaking ties in $\argmin$ arbitrarily).
  $G^k$ is defined in \defref{generators}, and $X^{k}$ is all points evaluated before iteration $k$.
  \label{table:sucils}}
  \end{minipage}
  \hfill
  \begin{minipage}[b]{0.44\linewidth}
    \centering
    \includegraphics[width=0.9\linewidth]{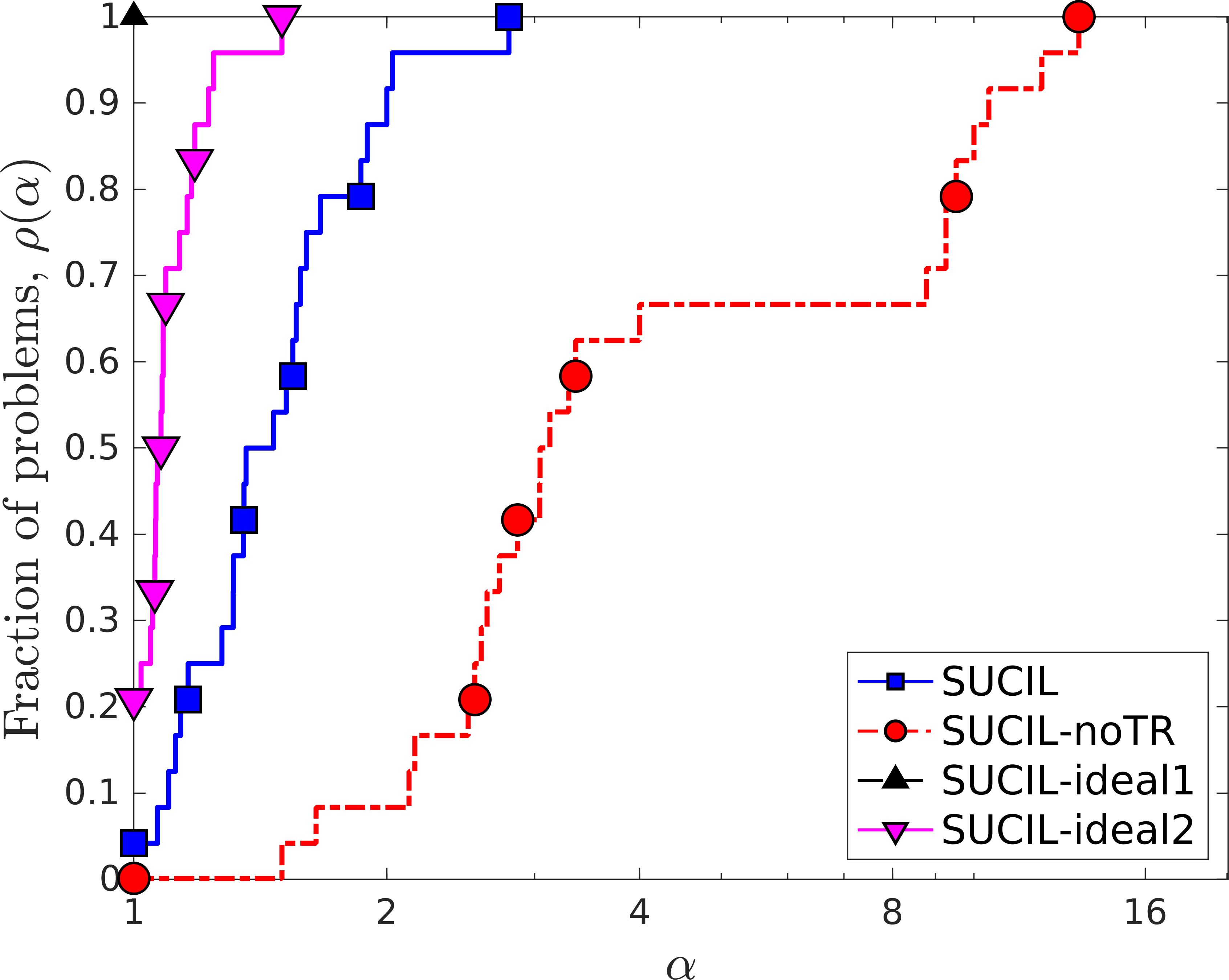}
    \captionof{figure}{Performance profiles for \code{SUCIL}s. Convergence
    measured by number of function evaluations
  before a method terminates with a certificate of global
  optimality.\label{fig:comparisons}}
  \end{minipage}
\end{table}

Below, we compare \code{SUCIL} implementations with a direct-search method,
\code{DFLINT} \cite{Liuzzi2018}; a model-based method, \code{MATSuMoTo}
\cite{Mueller2014a}; and a hybrid method, \code{NOMAD} \cite{Abramson2014}.
We tested the default nonmonotone \code{DFLINT} in MATLAB, as well as the
monotone version, denoted \code{DFLINT-M}.
We tested the default C++ version of \code{NOMAD} (v.3.9.0) as well as the
same version with {\em DISABLE MODELS} set to true, denoted
\code{NOMAD-dm};
the rest of the settings are default.
\code{MATSuMoTo} is a surrogate-model toolbox explicitly designed for
computationally expensive, black-box, global optimization problems. 
Since \code{MATSuMoTo} has a restarting mechanism that ensures that any budget of function
evaluations will be exhausted, 
we input the optimal objective function value to \code{MATSuMoTo} and allowed
it to run (and make as many restarts as required) until the global optimal value
was identified.
The default settings were used: surrogate models using cubic radial basis functions, 
sampling at the minimum of the surrogate, and using an initial 
symmetric Latin hypercube design. 
We performed 20 replications of \code{MATSuMoTo} for each problem instance;
the details of each run are shown in \tabrefs{matsumoto3d}{matsumoto5d} in
\appref{numerics}. We report the floor of the average
number of function evaluations incurred in the last row of these tables and use
this statistic for our comparisons.
A common starting point is given to all methods; the starting point for the
\prob{maxq} and \prob{mxhilb} problems is the global minimizer.
A maximum function evaluation limit of 1,000 is set for all the methods when
$n=4$ or $5$.

We perform numerical experiments minimizing the convex objectives in \tabref{probs} in \appref{probs} on the domains
$\left[ -4,4 \right]^{n}$ for $n \in \{3,4,5\}$ to yield 24 problem instances.
(The last row of \tabref{directions} shows $\left| \Omega \right|$ for these test problems.)
Of note is the \prob{KLT} function that
generalizes the example function from \cite{Kolda2003} that shows how
coordinate search methods can fail to find descent. The function from
\cite{Kolda2003} is itself a modification of the Dennis-Woods function
\cite{Dennis1987}, is strongly convex,
and points $x$ along the line $x_1 = \cdots = x_n$ satisfy
  $f(x) < f(x \pm \epsilon e_i)$
for all $i$ and for all $\epsilon > 0$. 
The problems \prob{CB3II}, \prob{CB3I}, \prob{LQ}, \prob{maxq}, and
\prob{mxhilb} were introduced in
\cite{Haarala2004} and also used in \cite{Liuzzi2018}.
These five problems
are either summation or maximization of generalizations
of simple convex functions, constructed by extending or chaining nonsmooth
convex functions or making smooth functions nonsmooth. The function \prob{LQ}
takes a global minimum at any $x \in [0,1]^n \cap \Z^n $ that does not
have zeros in consecutive coordinates. For example, for $n=3$, the points 
$[0,1,0]^\T, [0,1,1]^\T, [1,0,1]^\T, [1,1,0]^\T$, and $[1,1,1]^\T$ are optimal
but $[0,0,0]^\T, [0,0,1]^\T$, and $[1,0,0]^\T$ are not.

If there are relatively few points in $\Omega$ and the time required to evaluate $f$
is small (as for our test instances), one could argue that an enumerative procedure
itself could solve the problem in a reasonable time. However, we use
these 
instances to thoroughly examine the behavior that might be seen on 
expensive-to-evaluate black-box functions. 
Therefore, we compare methods using performance profiles \cite{DolaMore:02} that
are based on the number of function evaluations required to satisfy the
respective convergence criterion.
For each method $s$, $\rho_s(\alpha) = \frac{\lvert \{p \in P: r_{p,s}\leq \alpha \} \rvert}{\lvert P
\rvert}$, for a scalar $\alpha \geq 1$, $P$ is the collection of 
problems, and $r_{p,s}=\frac{N_{p,s}}{\min_{s \in S}\{N_{p,s}\}}$ is the
performance ratio. We consider two measures of $N_{p,s}$:
(1) the number of function evaluations before a method $s$ terminates on a
problem $p$ and (2) the number of function
evaluations taken by method $s$ to evaluate a global minimizer on problem $p$.

\figref{comparisons} compares the number of evaluations required for four
implementations of \code{SUCIL} to terminate (with a certificate of optimality)
on the set of test problems. While \code{SUCIL-ideal1} is no slower than
any other implementation on all the test problems, it is not a realistic method
in that it evaluates points based on their known function values. \code{SUCIL}
requires no more than three times the evaluations as \code{SUCIL-ideal1} for
the set of test problems.  We do observe
that using a trust region in \code{SUCIL} is a significant advantage. For many
of the problems considered, \code{SUCIL-noTR} spent many function evaluations
in the corners of $\Omega$.

As a point of comparison
with the results in \figref{comparisons}, a different
estimate of the number of function evaluations (or primitive directions
explorations) required for the proof of optimality for our instances can be
seen in \tabref{directions}, in columns corresponding to $n \in \{3,4,5\}$ and
$k=4$. As evident from the results in \tabrefs{3dComparisons}{5dComparisons}, 
our method incurs a remarkably low number of
function evaluations, which can be attributed to exploitation of convexity and
subsequent formation of the underestimators, as explained in \secref{background}.

\begin{figure}[t]
  \begin{center}
    \includegraphics[width=0.45\linewidth]{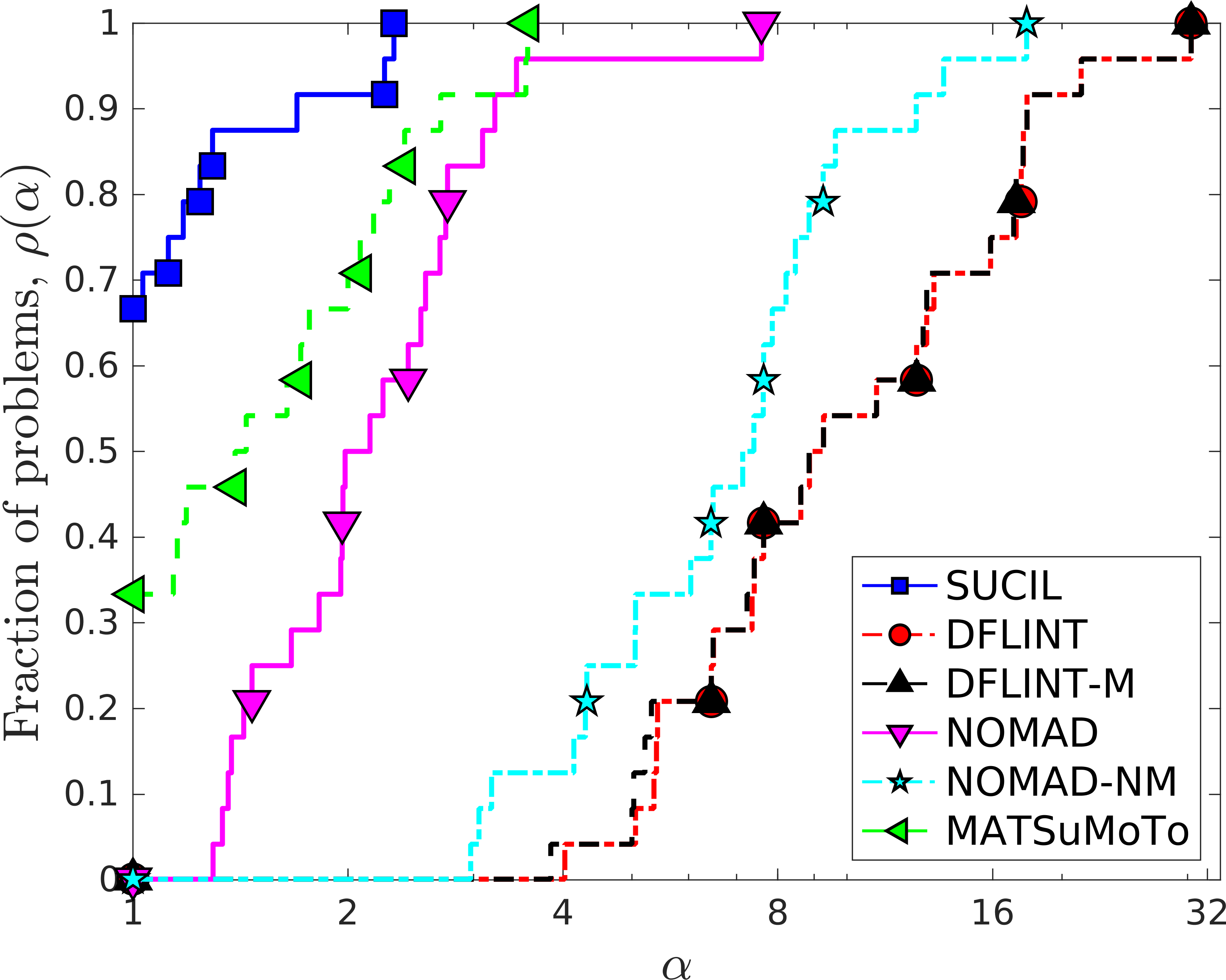}
    \hfill
    \includegraphics[width=0.45\linewidth]{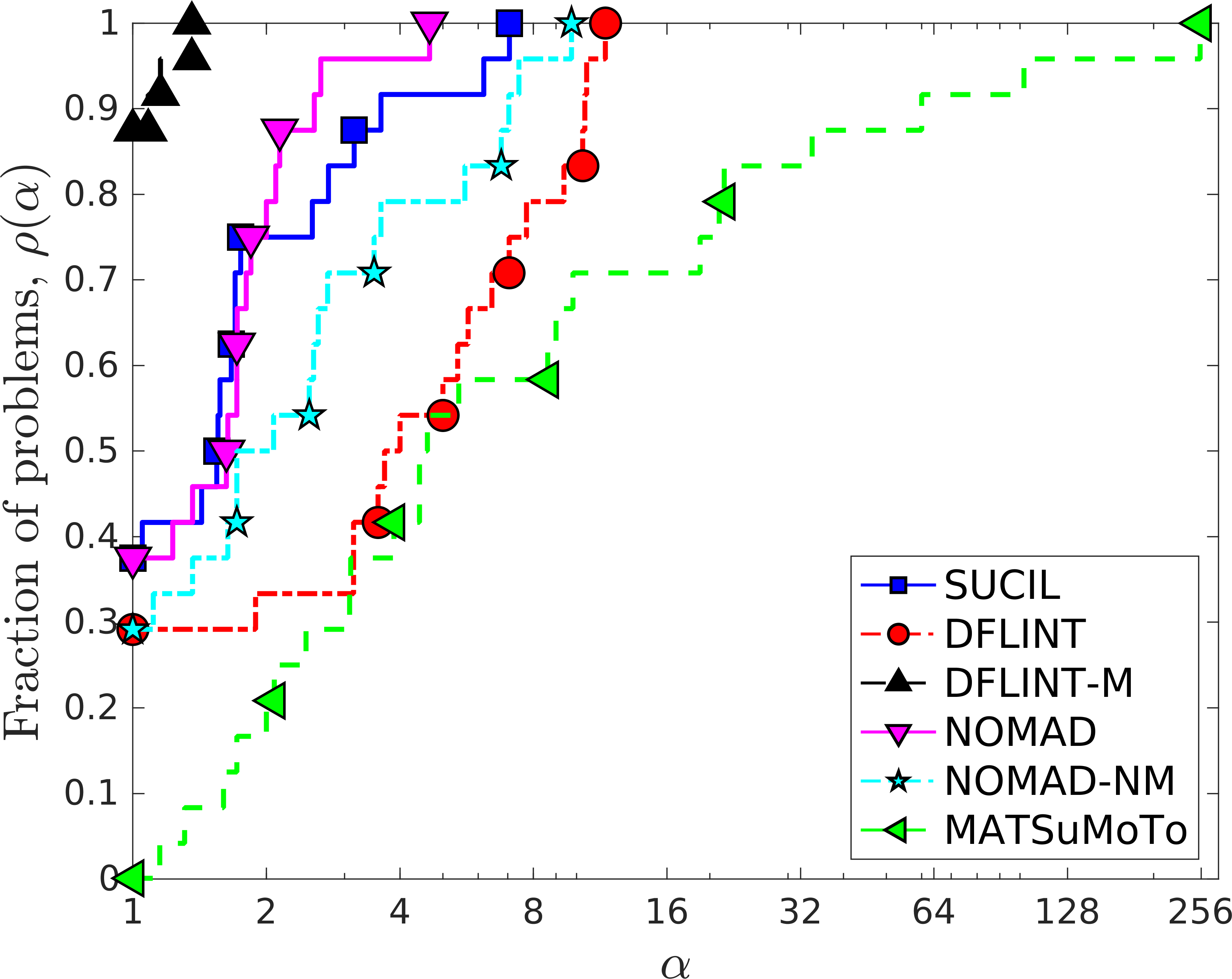}
  \end{center}
  \caption{Performance profiles of different methods solving 
  24 problem instances. Left compares the number of evaluations until a
  method terminates; right compares the number of evaluations before a
  method first evaluates a global minimizer.
  Performance for each solver on each problem instance can be found in
  \tabrefs{3dComparisons}{5dComparisons} in \appref{numerics}.
  \label{fig:perfProf}}
\end{figure}

We now analyze the performance of \code{SUCIL} compared with the other methods.
\figref{perfProf} (left) shows the performance profiles for methods to terminate   
on the 24 test problems; \figref{perfProf} (right) compares the number of function
evaluations required before each method first evaluates a global minimizer.
This comparison is nontrivial because each solver has its own design
considerations and notions of local optimality. Also, the other solvers
do not assume convexity of the problem or exploit it. Hence, our results merely
demonstrate that \code{SUCIL} provably converges to a global optimum and uses
fewer function evaluations because of its exploitation of convexity.
\figref{perfProf} shows that our algorithm requires the least number of function evaluations
for more than 65\% of the instances and provides a global optimality
certificate, in addition. In reaching the global optimal solution quickly, however,
DFLINT-M wins for more than 85\% of the instances. Although \code{SUCIL} is
not particularly designed to greedily descend to the global optimum, it
is still competitive with the rest of the methods on this front.

\section{Discussion}\label{sec:chall}

\begin{figure}
  \begin{center}
    \includegraphics[width=0.7\linewidth]{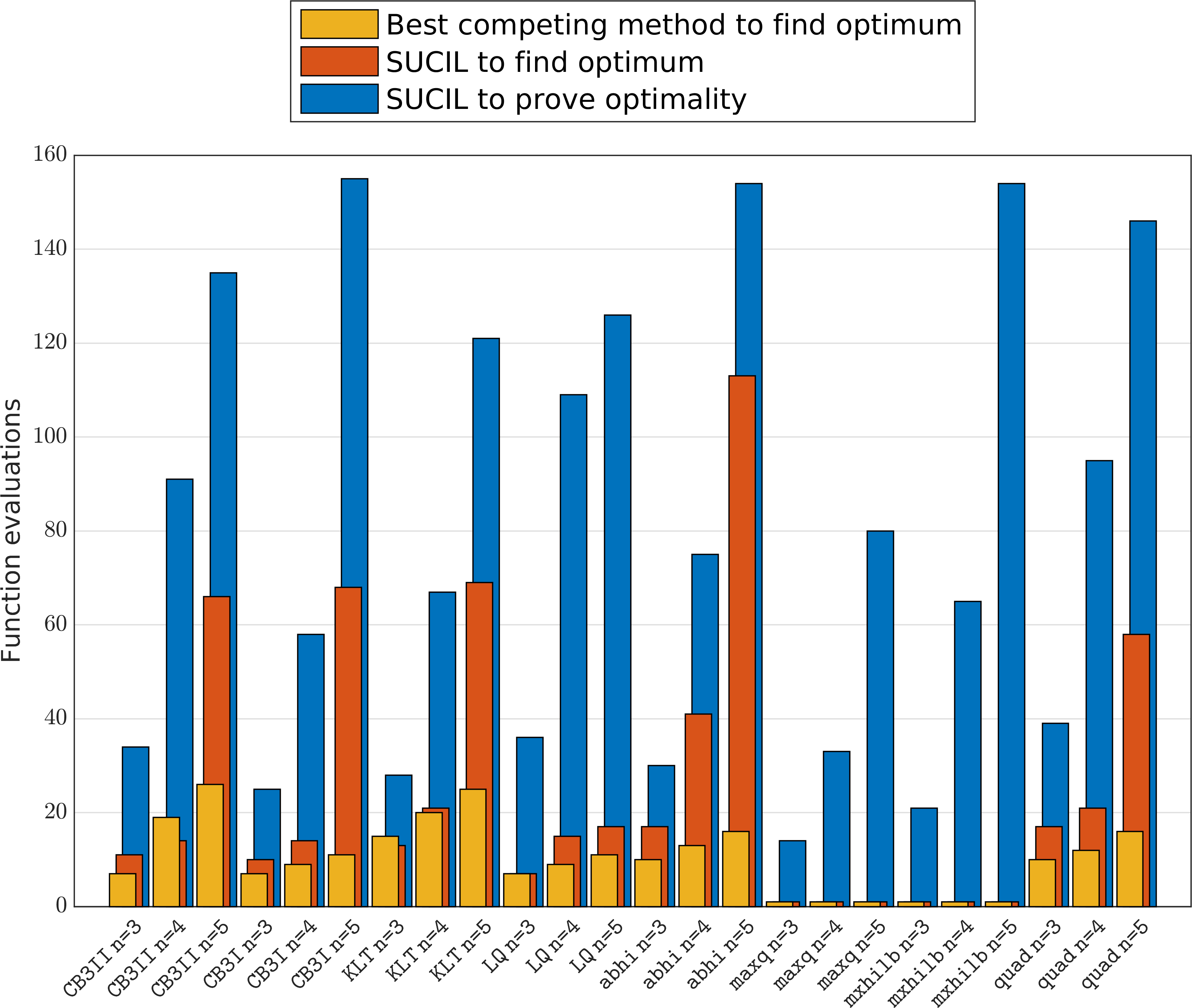}
  \end{center}
  \caption{Number of function evaluations before \code{SUCIL} first
  identifies a global minimum and evaluations required to prove its global
  optimality. The fewest number of evaluations required by any of
  \code{DFLINT}, \code{DFLINT-M}, \code{NOMAD}, \code{NOMAD-dm},
  and \code{MATSuMoTo} is shown for comparison.
  \label{fig:OptVsProof}}
\end{figure}

The order of results in this paper tells the story of how we arrived at the
implementation of \code{SUCIL}. We first attempted to classify where linear
interpolation models provide lower bounds for convex functions, yielding the results
in \secref{cuts}; we then proved that such linear functions can underlie a convergent
algorithm, as in \secref{algm}. We initially modeled the secants and the
conditions in which they are valid as an MILP, as
in \secref{tols}. After observing that the number of variables in the MILP model
was larger than the number of points in the domain  $\Omega$, we were motivated to develop
the enumerative model in \secref{enum}.

Our computational developments expose a number of fundamental challenges for
integer derivative-free optimization.
The complexity of our piecewise linear model \eqref{model:PLP} is made worse by
the fact that each secant function is valid only in the union of $n+1$ cones
$\cU^{\bi}$, resulting in conditional cuts. 
We note that it may not be possible to derive
{\em unconditional cuts}, that is, cuts that are valid in the whole domain $\Omega$.
For example, we might initially consider
secants interpolating a convex $f$ at the $n+1$ points $x \in \Z^n$ and $x \pm e_i \in \Z^n$,
where for every $i$ we can choose either $+$ or $-$. 
Such points form a unit simplex that has no integer 
points in its interior. Consequently, one might suspect that the resulting cut
is valid everywhere in $\Omega$. However,
the following example shows that the resulting cut is {\em not}
unconditionally valid. Consider $f(x) = x_1^2 - x_1 x_2 + x_2^2$ and the set of points
$\{ [1,1]^\T, [0,1]^\T, [1,0]^\T \}$. It follows that $f(x)=1$ at these points, and hence the
unique interpolating secant function is the constant function, $m(x) = 1$. Now consider
the point $x=[0,0]^\T$ for which $f(x)=0$, which is not underestimated by $m(x) = 1$.
Another limitation of our method is that it will have to evaluate all feasible points
when $\Omega = \{0,1\}^n$ (a pure binary domain) for convergence because no point in
$\Omega \setminus X^{\bi}$ belongs to $\cU^{\bi}$, where $X^{\bi}$ is an
arbitrary poised set of $n+1$ points in $\{0,1\}^n$.

In \figref{OptVsProof} we show the number of function evaluations needed to
first evaluate a global minimizer and the additional number of evaluations used
to prove it is a global minimizer. As is common, the effort required to
certify optimality can be significantly larger than the cost of finding the
optimum.  In terms of number of function evaluations required, the proof of
optimality is even more time consuming. Because the iterations where $X^k$ or
$G^k$ is large require checking many potential secant functions, in
\code{SUCIL} the computational cost of iterations can differ by orders of
magnitude as the algorithm progresses.

Although our method provides a practical iterative way to check sufficiency of a
set of points (optimality conditions) for a given convex instance, each
iteration involves construction and evaluation of a large number
of combinations of different $n+1$ points, which limits the scalability of
\algref{lookup} in solving instances of higher dimensions.
Yet, in our numerical experiments, we observe that only a small fraction of the
total cuts evaluated are useful.  We call $(c^{\bi},b^{\bi})$ an {\em
updating} cut at an iteration $k$ if there exists an $x \in \Omega_k$ such that
$m^{\bi}(x) > \eta_k(x)$, that is, a cut that improves the lower bound at at least
one $x \in \Omega_k$.  In addition, if $m^{\bi}(x) \geq u_k$, we call it a {\em
pruning} cut. A pruning cut helps eliminate points to be considered in the next
iteration ($\Omega_{k+1}$). \figref{updatePrune} shows the number of
updating and pruning cuts generated per iteration of \code{SUCIL} when minimizing
\prob{quad} on $[-4,4]^3 \cap \Z^3$. The fact that few cuts prune a point or update the
lower bound at any point where the minimum could be suggests that there may be
some way to exclude a large set of multi-indices from consideration, possibly
yielding dramatic computational savings.

\begin{figure}
  \begin{center}
    \includegraphics[width=0.7\linewidth]{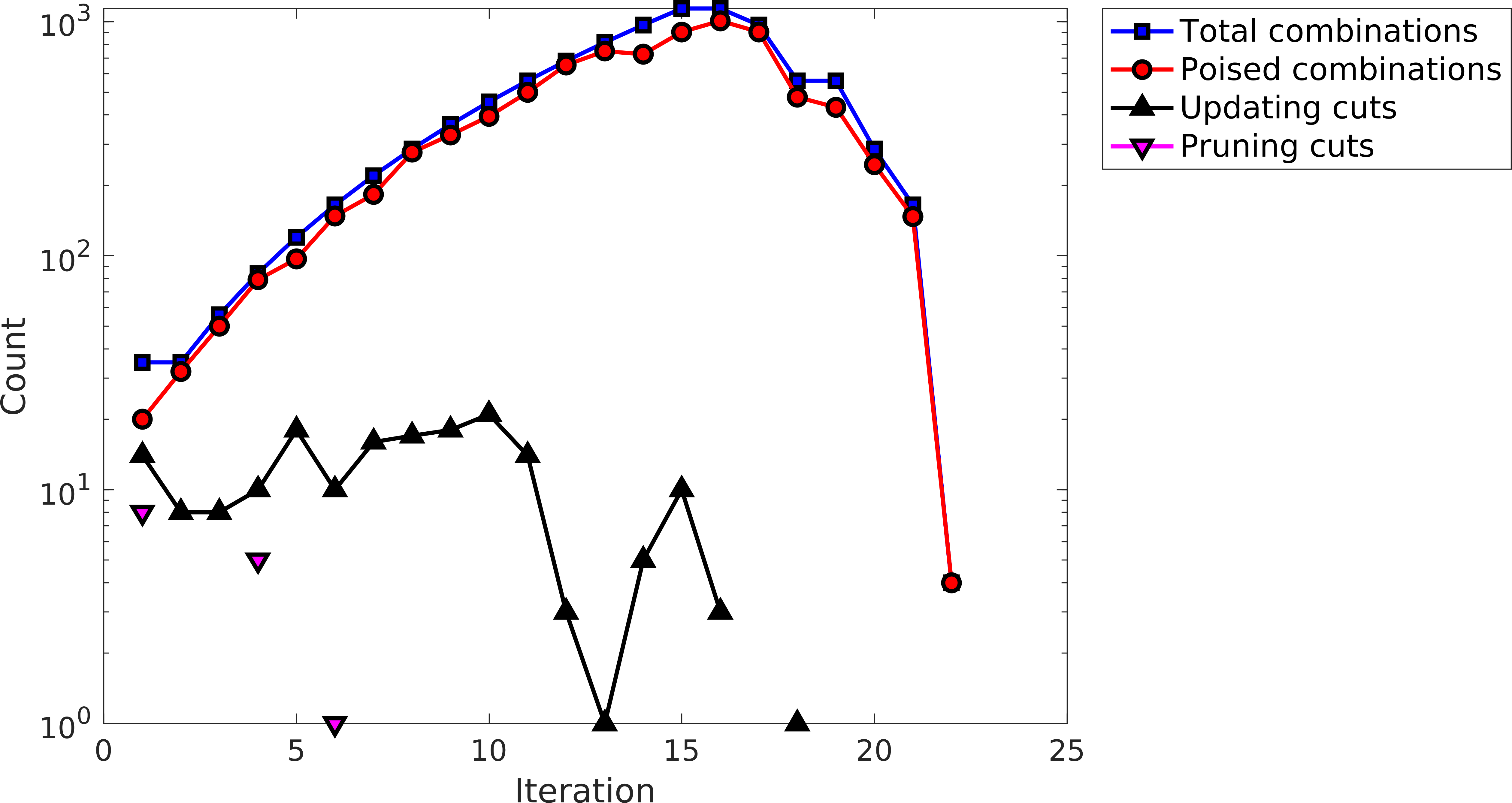}
    \caption{Number of total and poised combinations of
    $n+1$ points, the secant functions that update, and the secant functions that
    prune at least one point when minimizing \prob{quad} on $\Omega = [-4,4]^3 \cap \Z^3$ using
    \code{SUCIL}. (Markers are removed when there is no updating or no
    pruning cut in an iteration.)\label{fig:updatePrune}}
  \end{center}
\end{figure}

Ideally, we would like to evaluate only the combinations that yield
updating or pruning cuts. However, this approach requires the solution of a
separate problem that we believe is especially hard to solve. Even the following
simpler problem of finding a pruning cut at a given candidate point seems
difficult.

\begin{problem} \label{prob:cutSep}
Given a point $\bar{x} \in \Z^n$, a set of (integer) points $X$ where $f$ has
been evaluated, and scalar $u$, find a cut that prunes $\bar{x}$. That is, find a
multi-index $\bi$ such that $\bar{x} \in \cU^{\bi}$ and ${(c^{\bi})}^\T
\bar{x} + b^{\bi} \geq u$, and $(c^{\bi},b^{\bi})$ solves \eqref{eq:interp},
or show that no such multi-index $\bi$ exists.
\end{problem}

If we choose a small
subset $\bar{X^k}$ of $X^k$ to form $W(\bar{X}^k)$, the \code{SUCIL} algorithm
can end up using a
large number of function evaluations to obtain a certificate of optimality. The
reason is that points are evaluated that would be ruled worse than optimal if
secants were built by using all combinations of points in $X^k$. 
This situation occurred when setting $\bar{X^k}$ to be a random subset of $X^k$, a subset
of the points closest to $\hat{x}$, or a subset of points with best function
values. 
Using $G^k$ avoids discarding too many
points from $X^k$; but we observe a significant increase in  
$\left| W(G^k) \right|$, and thus we incur heavy computational costs during some iterations. The
wall-clock time required per iteration for solving instances of dimension less
than 5 in our setup is not significant, but we present the same for
5-dimensional instances using \code{SUCIL} on a 96-core Intel Xeon computer with 1.5~TB of RAM. 
The complexity of our approach is better quantified by counting
the number of combinations of points (or potential secants) 
considered at iteration $k$. Using $G^k$, we typically produce a
strict subset
of all possible combinations in such a way that the size of $W(G^k)$
decreases during the later iterations. This is shown in
\figref{walltime_and_secants}: the number of
secants added per iteration for all 5-dimensional test instances using
\code{SUCIL}. Once $\Omega_k$, the number of points with $\eta(x)$ less than
$f(\hat{x})$, starts decreasing, so do $G^k$ and $\left| W(G^k) \right|$. 
In general, it is difficult to predict when the
number of combinations (or the wall-clock time curve) would be at the peak, but we
suspect this peak will be worse as $n$ increases, by both the size and the iteration
number where it occurs. This limits the applicability of the current
implementation of \code{SUCIL} on higher-dimensional problems.
\begin{figure}
  \begin{center}
    \includegraphics[width=0.45\linewidth]{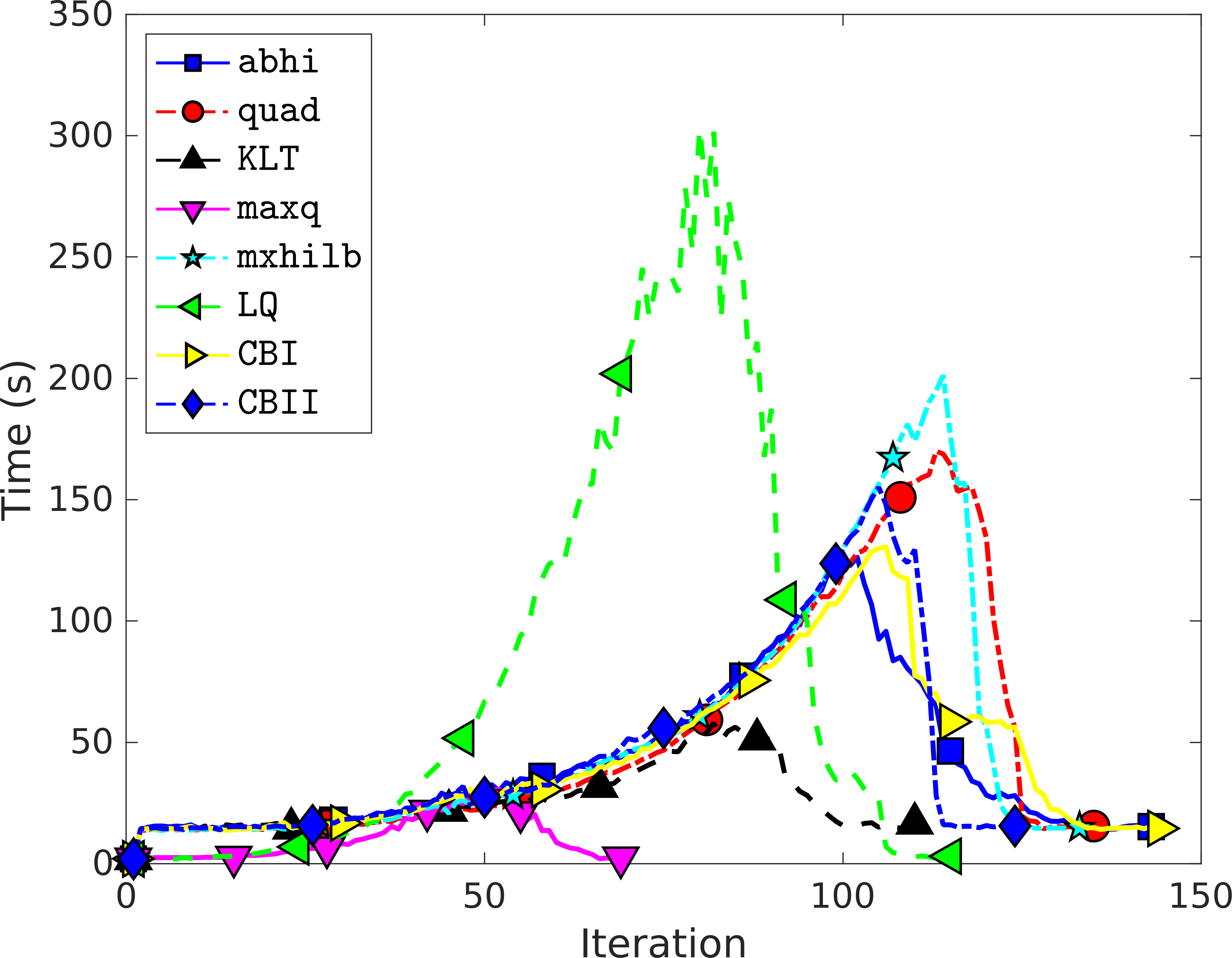}
    \hfill
    \includegraphics[width=0.45\linewidth]{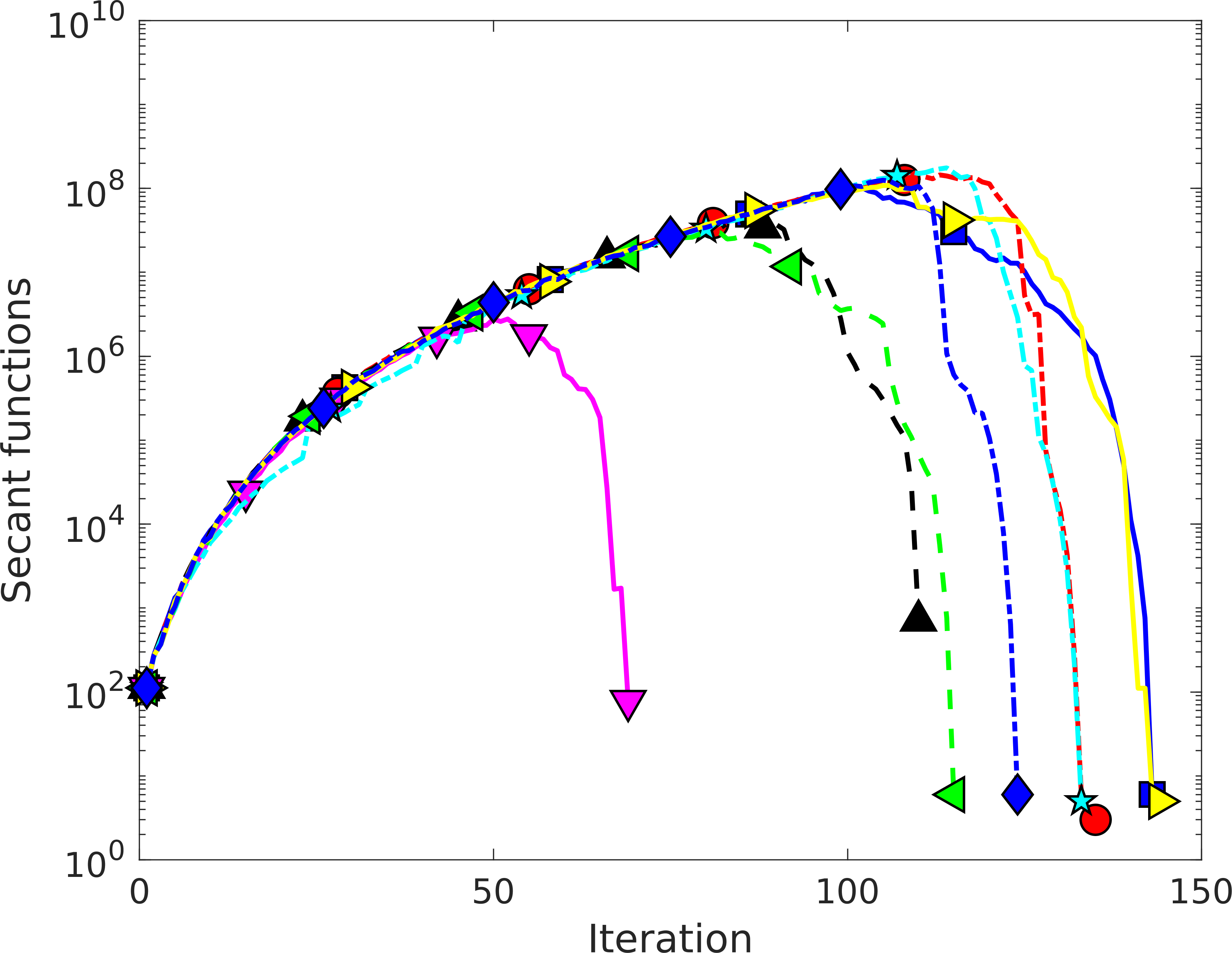}
  \end{center}
  \caption{Wall-clock time recorded and number of secants constructed per
  iteration of \code{SUCIL} for 8 convex test problems on
  $\Omega = [-4,4]^5 \cap \Z^5$. \label{fig:walltime_and_secants}}
\end{figure}

Again, since nearly all cuts in $W(G^k)$ do not update $\eta(x)$ at any point in
$\Omega_k$ (see \figref{updatePrune}), we believe there may be some approach for
intelligently selecting points from $X^k$ using their geometry, their function
values, and distance from $\hat{x}$ that will rule some multi-indices $\bi$ as
unnecessary to consider. We did attempt to identify minimal sets of points that
were necessary for \code{SUCIL} to certify optimality for a variety of
$n=2$ test cases, but no general rule was apparent.
 
We note that the storage requirements for the enumerative model may be prohibitive,
even for moderate problem sizes. For example, an array storing the value of
$\eta(x)$ as an 8-byte scalar for all $x \in \Omega = [-10, 10]^{10} \cap
\Z^{10}$ would require over 200~GB of storage. 

Ultimately, we believe further insights are yet to be discovered that will
facilitate better algorithms for minimizing convex functions on integer domains.  

\section*{Acknowledgements}
We are grateful to Eric Ni for his insights on derivative-free algorithms for 
unrelaxable integer variables. 
Sven Leyffer also wishes to acknowledge the insightful
discussions on an early draft of this work during the Dagstuhl seminar 18081.
This material is based upon work supported by the 
applied mathematics and SciDAC activities of the
Office of Advanced Scientific Computing Research,
Office of Science, 
U.S.\ Department of Energy,
under Contract DE-AC02-06CH11357.

\appendix
\renewcommand{\thetable}{\Alph{section}\arabic{table}}
\setcounter{figure}{0}
\renewcommand{\thefigure}{\Alph{section}\arabic{figure}}
\section{Test Problems}\label{app:probs} \setcounter{table}{0}
\renewcommand{\arraystretch}{1.5}
\begin{table}[h]
  \scriptsize
  \captionsetup{font=footnotesize}
\center
\caption{Set of convex test problems.\label{table:probs}}
\begin{tabular}{|c|c|c|c|c|}
\hline
Name & Expression & $f(x^*)$ & $x^*$ \\ \hline \hline
\prob {CB3II} \cite{Charalambous1978} & $\dps\max \left \{ \sum_{i=1}^{n-1} x_i^4 + x_{i+1}^2, \sum_{i=1}^{n-1} \left (2-x_i \right )^2 + \left (2-x_{i+1} \right )^2, \sum_{i=1}^{n-1} 2e^{-x_i + x_{i+1}} \right \}$ & $2(n-1)$ & $e$ \\ \hline
\prob {CB3I} \cite{Charalambous1976non} & $\dps\sum_{i=1}^{n-1}\max \left \{ x_i^4 + x_{i+1}^2, \left (2-x_i \right )^2 + \left (2-x_{i+1} \right)^2, 2e^{-x_i + x_{i+1}} \right \}$ & $2(n-1)$ & $e$ \\ \hline
\prob {KLT} \cite{Kolda2003} & $\dps\max_{i \in \left\{ 1,\ldots,n \right\}} \left\{ \left\| x - c_i -2e \right\|^2 \right\}, c_i = 2 e_i - e$ & $n$ & $2e$ \\ \hline
\prob {LQ} \cite{Haarala2004} & $\dps\sum_{i=1}^{n-1}\max \left \{ -x_i-x_{i+1}, -x_i-x_{i+1} + x^2_i + x^2_{i+1} -1 \right \}$ & $-(n-1)$ & many  \\ \hline
\prob {abhi} \cite{Abhishek2010} & \vtop{\hbox{\strut $\dps\sum_{i=1}^n \left[ 64 \left(c_1(x_i-2)-c_2(x_{i+1} -2) \right)^2 + \left(c_2(x_i-2) - c_1(x_{i+1} -2) \right)^2 \right]$,} \hbox{\strut \hspace{5.5em} $c_1=\cos\left( \frac{\pi}{8} \right), c_2=\sin\left( \frac{\pi}{8} \right)$} \vspace{0.5em}}  & 0 & $2e$ \\ \hline
\prob {maxq} \cite{Haarala2004} & $\dps\max_{i \in \left \{ 1,\ldots,n \right \}} \left\{ x^2_i \right\}$  & 0 & 0 \\ \hline
\prob {mxhilb} \cite{Kiwiel1989}  & $\dps\max_{i \in \left\{ 1,\ldots,n \right\}} \left\{ \sum_{j=1}^n \left \lvert \frac{x_j}{i+j-1} \right \rvert \right\}$ & 0 & 0 \\ \hline
\prob {quad} & $\dps\sum_{i=1}^n \left (x_i-2 \right )^2 $ & 0 & $2e$ \\ \hline
\end{tabular}
\end{table}
\renewcommand{\arraystretch}{1}

\FloatBarrier 
\section{Performance of MILP Model}\label{app:mip} \setcounter{table}{0}

\begin{table}
  \scriptsize
  \captionsetup{font=footnotesize}
  \caption{Characteristics of the first 12 instances of \eqref{model:CPFull}
  generated by \algref{framework} minimizing the convex quadratic function
  \prob{abhi} on $\Omega=[-2,2]^3 \cap \Z^3$.
  \eqref{model:CPFull} instances are generated by AMPL and solved by
  CPLEX; times are the mean of five replications.\label{table:mipExpo}}
  \center
\begin{tabular}{|r|r|r|r|r|r|r|r|r|r|r|r|r|r|}
\hline
\textit{k} & \textit{sHyp} & \textit{LB}
& \textit{UB} & \textit{time} & \textit{simIter} & \textit{nodes}
& \textit{bVars} & \textit{cVars} & \textit{cons} & \textit{$\hat{x}$} \\ \hline
1  & 20   & -616.3 & 79.9 & 0.1 & 374    & 0     & 335   & 268   & 960              & $[2; 2;-2]$ \\ \hline
2  & 52   & -555.1 & 79.9 & 3.9 & 13,180  & 8,089  & 847   & 685   & 2,466          & $[2; 2;-1]$ \\ \hline
3  & 100  & -475.2 & 44.7 & 10.9 & 31,423  & 8,555  & 1,615  & 1,310  & 4,724       & $[2; 1;-2]$ \\ \hline
4  & 172  & -434.4 & 44.7 & 7.3 & 19,267  & 1,728  & 2,767  & 2,247  & 8,110        & $[1; 2;-2]$ \\ \hline
5  & 276  & -413.9 & 19.1 & 30.8 & 68,264  & 5,874  & 4,431  & 3,600  & 13,000      & $[2; 1;-1]$ \\ \hline
6  & 418  & -373.1 & 19.1 & 95.1 & 84,031  & 7,933  & 6,703  & 5,447  & 19,676      & $[1; 2;-1]$ \\ \hline
7  & 611  & -311.7 & 19.1 & 59.6 & 83,102  & 5,440  & 9,791  & 7,957  & 28,749      & $[2;-2;-2]$ \\ \hline
8  & 866  & -293.2 & 19.1 & 99.6 & 86,318  & 3,933  & 13,871 & 11,273 & 40,736      & $[1; 1;-2]$ \\ \hline
9  & 1,196 & -232.0 & 19.1 & 154.2 & 84,440  & 4,568  & 19,151 & 15,564 & 56,248    & $[1; 1;-1]$ \\ \hline
10 & 1,532 & -199.5 & 19.1 & 452.3 & 235,473 & 6,400  & 24,527 & 19,933 & 72,042    & $[2;-2;-1]$ \\ \hline
11 & 2,038 & -192.9 & 19.1 & 1,006 & 387,491 & 9,686  & 32,623 & 26,512 & 95,826    & $[2;-1;-2]$ \\ \hline
12 & 2,605 & -140.9 & 19.1 & 964.3 & 455,939 & 29,279 & 41,695 & 33,884 & 122,477   & $[1;-1;-2]$ \\ \hline
\end{tabular}
\end{table}

\tabref{mipExpo} shows the size of the MILP model at 
each iteration and the computational effort required for solving it.
The column {\em k} refers to the iteration of \algref{framework},
{\em sHyp} denotes the number of secants (i.e., $\left| W(X) \right|$),
and {\em LB} and {\em UB} give the lower and upper bound on $f$ on $\Omega$, respectively. We show the
computational effort needed to solve each MILP via {\em time}, the
mean solution time (in seconds) for $5$ replications; {\em simIter}, the number of simplex iterations; and {\em
nodes}, the number of branch-and-bound nodes explored by the MILP solver.
The size of each MILP (after presolve) is
shown in terms of {\em bVars}, the number of binary variables; {\em
cVars}, the number of continuous variables; and {\em cons}, the number of
constraints. \tabref{mipExpo} also shows the optimal solution $\hat{x}$ of each
MILP. These experiments were performed by using CPLEX (v.12.6.1.0) on a 2.20~GHz, 12-core Intel
Xeon computer with 64~GB of RAM. 
For this small problem we see that the size of the MILP grows
exponentially as the iterations proceed, which results in an exponential growth
in solution time as illustrated in \figref{MIPExpo}. The iteration 13 MILP was
not solved after $30$ minutes.

\FloatBarrier 
\section{Detailed Numerical Results}\label{app:numerics} \setcounter{table}{0}
\tabrefs{3dComparisons}{matsumoto5d} contain detailed numerical results for the
interested reader. Note that some
solvers do not respect the given budget of function evaluations.
We have used a different stopping criterion for \code{MATSuMoTo}: it is set to stop only
when a point with the optimal value has been identified. Also, although the
global minimum is the starting point for \prob{maxq} and \prob{mxhilb},
\code{MATSuMoTo} instead uses its initial symmetric Latin hypercube design.
The last row of \tabref{directions} in \secref{background} shows $\left| \Omega \right|$ for these problems.

\begin{table}[htbp]
  \scriptsize
  \captionsetup{font=footnotesize}
  \caption{Number of function evaluations before solvers terminate for $n=3$ test problems. Parentheses show number of evaluations until $x^*$ is evaluated.\label{table:3dComparisons}}
\center
\begin{tabular}{|c|r|r|r|r|r|r|r|r|}
\hline
& \code{SUCIL-ideal1} & \code{SUCIL} & \code{DFLINT} & \code{DFLINT-M} & \code{NOMAD} & \code{NOMAD-dm} & \code{MATSuMoTo}\\ \hline
\prob{abhi}      & 19 (8) & 30 (17) & 161 (57)  & 150 (10) & 59 (20) & 129 (56) & 60 (31)\\ \hline 
\prob{quad}      & 25 (8) & 39 (17) & 157 (54)  & 150 (10) & 53 (18) & 119 (35) & 45 (16)\\ \hline 
\prob{KLT} & 22 (8) & 28 (13) & 152 (48)  & 146 (15) & 51 (24) & 116 (34) & 46 (17)\\ \hline 
\prob{maxq}      & 14 (1) & 14 (1)  & 181 (1)   & 181 (1)  & 34 (1)  & 107 (1)  & 50 (21)\\ \hline 
\prob{mxhilb}    & 16 (1) & 21 (1)  & 181 (1)   & 181 (1)  & 35 (1)  & 106 (1)  & 48 (19)\\ \hline 
\prob{LQ}   & 18 (8) & 36 (7)  & 182 (7)   & 181 (7)  & 48 (7)  & 107 (7)  & 41 (12)\\ \hline 
\prob{CB3I} & 22 (8) & 25 (10) & 184 (22)  & 181 (7)  & 56 (12) & 108 (12) & 60 (31)\\ \hline 
\prob{CB3II}& 22 (8) & 34 (11) & 184 (22)  & 181 (7)  & 44 (12) & 108 (12) & 60 (31)\\ \hline 
\end{tabular}
\end{table}

\begin{table}[htbp]
  \scriptsize
  \captionsetup{font=footnotesize}
  \caption{Number of function evaluations before solvers terminate for $n=4$ test problems. Parentheses show number of evaluations until $x^*$ is evaluated.\label{table:4dComparisons}}
\center
\begin{tabular}{|c|r|r|r|r|r|r|r|r|r|}
\hline
& \code{SUCIL-ideal1} & \code{SUCIL} & \code{DFLINT} & \code{DFLINT-M} & \code{NOMAD} & \code{NOMAD-dm} & \code{MATSuMoTo}\\ \hline
\prob{abhi} 	   & 45 (10) & 75 (41)  & 993 (137) & 959 (13)  & 110 (16) & 453 (88) & 89  (60)\\ \hline 
\prob{quad} 	   & 51 (10) & 95 (21)  & 982 (124) & 959 (13)  & 111 (12) & 460 (89) & 56  (25)\\ \hline 
\prob{KLT} 	   & 51 (10) & 67 (21)  & 954 (141) & 953 (20)  & 116 (42) & 457 (55) & 54  (23)\\ \hline 
\prob{maxq} 	   & 30 (1)  & 33 (1)   & 1,001 (1)  & 1,001 (1)  & 91  (1)  & 451 (1)  & 89  (60)\\ \hline 
\prob{mxhilb}        & 32 (1)  & 65 (1)   & 1,001 (1)  & 1,001 (1)  & 90  (1)  & 450 (1)  & 63  (34)\\ \hline 
\prob{LQ}       & 39 (10) & 109 (15) & 1,000 (17) & 1,001 (9)  & 145 (9)  & 453 (10) & 47  (18)\\ \hline 
\prob{CB3I}     & 50 (10) & 58 (14)  & 1,001 (58) & 1,001 (9)  & 149 (42) & 456 (23) & 126 (81)\\ \hline 
\prob{CB3II}    & 48 (10) & 91 (14)  & 1,001 (50) & 1,001 (19) & 125 (30) & 460 (35) & 131 (76)\\ \hline 
\end{tabular}
\end{table}

\begin{table}[htbp]
  \scriptsize
  \captionsetup{font=footnotesize}
  \caption{Number of function evaluations before solvers terminate for $n=5$ test problems. Parentheses show number of evaluations until $x^*$ is evaluated.\label{table:5dComparisons}}
\center
\begin{tabular}{|c|r|r|r|r|r|r|r|r|r|}
\hline
& \code{SUCIL-ideal1} & \code{SUCIL} & \code{DFLINT} & \code{DFLINT-M} & \code{NOMAD} & \code{NOMAD-dm} & \code{MATSuMoTo}\\ \hline
\prob{abhi} 	    & 105 (12) & 154 (113) & 1,001 (167) & 1,000 (16) & 301 (41) & 1,000 (156) & 214 (157)\\ \hline 
\prob{quad} 	    & 108 (12) & 146 (58)  & 1,001 (186) & 1,000 (16) & 286 (26) & 1,000 (58) & 113 (62) \\ \hline 
\prob{KLT}      & 108 (12) & 121 (69)  & 1,001 (193) & 1,001 (25) & 296 (43) & 1,000 (176) & 108 (77) \\ \hline 
\prob{maxq} 	    & 75 (1)   & 80 (1)    & 1,001 (1)   & 1,001 (1)  & 257 (1)  & 1,000 (1)   & 284 (255)\\ \hline 
\prob{mxhilb}    	    & 96 (1)   & 154 (1)   & 1,001 (1)   & 1,001 (1)  & 257 (1)  & 1,000 (1)   & 131 (102)\\ \hline 
\prob{LQ}        & 83 (12)  & 126 (17)  & 1,001 (44)  & 1,001 (11) & 425 (15) & 1,000 (15)  & 56 (27)  \\ \hline 
\prob{CB3I}      & 114 (12) & 155 (68)  & 1,001 (103) & 1,001 (11) & 417 (18) & 1,000 (18)  & 266 (237)\\ \hline 
\prob{CB3II}     & 100 (12) & 135 (66)  & 1,001 (130) & 1,001 (26) & 465 (69) & 1,000 (54)  & 281 (224)\\ \hline 
\end{tabular}
\end{table}

\begin{table}[htbp]
  \scriptsize
  \captionsetup{font=footnotesize}
  \caption{Number of function evaluations taken by 20 replications of \code{MATSuMoTo} for each of the $8$ convex test problems for $n=3$.\label{table:matsumoto3d}}
\center
\begin{tabular}{|c|r|r|r|r|r|r|r|r|}
\hline
& \prob{abhi} & \prob{quad} & \prob{KLT} & \prob{maxq} & \prob{mxhilb} & \prob{LQ} & \prob{CB3I} & \prob{CB3II} \\ \hline
1  & 48 (19) & 48 (19) & 48 (19) & 43 (14) & 48 (19) & 43 (14) & 59 (30) & 83 (54) \\ \hline
2  & 63 (34) & 44 (15) & 44 (15) & 48 (19) & 44 (15) & 43 (14) & 43 (14) & 54 (25) \\ \hline
3  & 44 (15) & 43 (14) & 45 (16) & 48 (19) & 49 (20) & 43 (14) & 48 (19) & 106(77)  \\ \hline
4  & 58 (29) & 43 (14) & 48 (19) & 53 (24) & 64 (35) & 39 (10) & 48 (19) & 63 (34) \\ \hline
5  & 79 (50) & 48 (19) & 49 (20) & 54 (25) & 44 (15) & 38 (9)  & 53 (24) & 53 (24) \\ \hline
6  & 73 (44) & 37 (9)  & 48 (19) & 39 (10) & 53 (24) & 39 (10) & 64 (35) & 53 (24) \\ \hline
7  & 88 (59) & 43 (14) & 44 (15) & 58 (29) & 49 (20) & 44 (15) & 54 (25) & 59 (30) \\ \hline
8  & 60 (31) & 48 (19) & 43 (14) & 38 (9)  & 43 (14) & 44 (15) & 73 (44) & 73 (44) \\ \hline
9  & 79 (50) & 43 (14) & 48 (19) & 53 (24) & 43 (14) & 39 (10) & 68 (39) & 58 (29) \\ \hline
10 & 65 (36) & 53 (24) & 49 (20) & 53 (24) & 49 (20) & 44 (15) & 49 (20) & 43 (14) \\ \hline
11 & 63 (34) & 48 (19) & 48 (19) & 48 (19) & 58 (29) & 43 (14) & 63 (34) & 48 (19) \\ \hline
12 & 53 (24) & 44 (15) & 43 (14) & 48 (19) & 48 (19) & 38 (9)  & 58 (29) & 53 (24) \\ \hline
13 & 48 (19) & 49 (20) & 48 (19) & 54 (25) & 49 (20) & 53 (24) & 68 (39) & 37 (9)  \\ \hline
14 & 43 (14) & 48 (19) & 48 (19) & 43 (14) & 49 (20) & 43 (14) & 78 (49) & 49 (20) \\ \hline
15 & 44 (15) & 48 (19) & 48 (19) & 48 (19) & 48 (19) & 43 (14) & 39 (10) & 73 (44) \\ \hline
16 & 58 (29) & 48 (19) & 43 (14) & 58 (29) & 37 (9)  & 38 (9)  & 58 (29) & 73 (44) \\ \hline
17 & 48 (19) & 44 (15) & 43 (14) & 53 (24) & 49 (20) & 37 (9)  & 68 (39) & 48 (19) \\ \hline
18 & 64 (35) & 43 (14) & 48 (19) & 63 (34) & 53 (24) & 38 (9)  & 93 (64) & 69 (40) \\ \hline
19 & 74 (45) & 43 (14) & 48 (19) & 58 (29) & 53 (24) & 37 (9)  & 68 (39) & 63 (34) \\ \hline
20 & 49 (20) & 43 (14) & 43 (14) & 58 (29) & 48 (19) & 44 (15) & 54 (25) & 58 (29) \\ \hline
\hline
$\floor{ \mathbf{\rm mean}}$& \textbf{60 (31)} & \textbf{45 (16)} & \textbf{46 (17)} & \textbf{50 (21)} & \textbf{48 (19)} & \textbf{41 (12)} & \textbf{60 (31)} & \textbf{60 (31)} \\ \hline
\end{tabular}
\end{table}

\begin{table}[htbp]
  \scriptsize
  \captionsetup{font=footnotesize}
  \caption{Number of function evaluations taken by 20 replications of \code{MATSuMoTo} for each of the $8$ convex test problems for $n=4$.\label{table:matsumoto4d}}
\center
\begin{tabular}{|c|r|r|r|r|r|r|r|r|}
\hline
& \prob{abhi} & \prob{quad} & \prob{KLT} & \prob{maxq} & \prob{mxhilb} & \prob{LQ} & \prob{CB3I} & \prob{CB3II} \\ \hline
1  & 110 (81) & 56 (27) & 51 (22) & 66	(37) & 60 (31) & 41 (12) & 230 (20) & 85  (56) \\ \hline
2  & 70  (41) & 55 (26) & 60 (31) & 120	(91) & 60 (31) & 46 (17) & 76  (47) & 85  (56) \\ \hline
3  & 50  (21) & 50 (21) & 50 (21) & 94	(65) & 50 (21) & 41 (12) & 150 (121)& 196 (11)  \\ \hline
4  & 104 (75) & 50 (21) & 50 (21) & 65	(36) & 80 (51) & 45 (16) & 110 (81) & 186 (16) \\ \hline
5  & 91  (62) & 60 (11) & 55 (26) & 112	(83) & 80 (51) & 41 (12) & 100 (11) & 161 (132) \\ \hline
6  & 66  (37) & 55 (26) & 56 (27) & 90	(61) & 56 (27) & 55 (26) & 101 (72) & 196 (167) \\ \hline
7  & 86  (57) & 65 (36) & 45 (16) & 65	(36) & 50 (21) & 46 (17) & 90  (61) & 141 (112) \\ \hline
8  & 90  (61) & 55 (26) & 65 (11) & 81	(52) & 65 (36) & 41 (12) & 105 (63) & 346 (161) \\ \hline
9  & 166 (137)& 56 (27) & 56 (27) & 55	(26) & 50 (21) & 51 (22) & 96  (67) & 100 (71) \\ \hline
10 & 85  (56) & 65 (21) & 55 (11) & 60	(31) & 65 (36) & 55 (26) & 55  (26) & 106 (77) \\ \hline
11 & 147 (118)& 70 (41) & 46 (17) & 70	(41) & 65 (36) & 46 (17) & 95  (66) & 80  (51) \\ \hline
12 & 96  (67) & 45 (16) & 46 (17) & 112	(83) & 71 (42) & 50 (21) & 112 (83) & 231 (202) \\ \hline
13 & 39  (11) & 60 (31) & 71 (42) & 119	(90) & 71 (42) & 41 (12) & 50  (21) & 81  (52)   \\ \hline
14 & 85  (56) & 61 (32) & 50 (21) & 85	(56) & 55 (26) & 55 (26) & 205 (176)& 55  (26) \\ \hline
15 & 101 (72) & 60 (31) & 60 (31) & 94	(65) & 55 (26) & 41 (12) & 160 (131)& 114 (85) \\ \hline
16 & 66  (37) & 51 (22) & 45 (16) & 50	(21) & 75 (46) & 41 (12) & 65  (36) & 65  (36) \\ \hline
17 & 76  (47) & 65 (36) & 56 (27) & 55	(26) & 60 (31) & 60 (31) & 85  (56) & 131 (102) \\ \hline
18 & 70  (41) & 56 (27) & 51 (22) & 110	(81) & 75 (46) & 65 (36) & 314 (285)& 70  (41) \\ \hline
19 & 81  (52) & 56 (11) & 60 (31) & 221	(192)& 71 (42) & 41 (12) & 117 (16) & 141 (52) \\ \hline
20 & 105 (76) & 46 (17) & 60 (31) & 66	(37) & 50 (21) & 56 (27) & 210 (13) & 50  (21) \\ \hline
\hline
$\floor{ \mathbf{\rm mean}}$& \textbf{89 (60)} & \textbf{56 (25)} & \textbf{54 (23)} & \textbf{89 (60)} & \textbf{63 (34)} & \textbf{47 (18)} & \textbf{126 (81)} & \textbf{131 (76)} \\ \hline
\end{tabular}
\end{table}

\begin{table}[htbp]
  \scriptsize
  \captionsetup{font=footnotesize}
  \caption{Number of function evaluations taken by 20 replications of \code{MATSuMoTo} for each of the $8$ convex test problems for $n=5$.\label{table:matsumoto5d}}
\center
\begin{tabular}{|c|r|r|r|r|r|r|r|r|}
\hline
& \prob{abhi} & \prob{quad} & \prob{KLT} & \prob{maxq} & \prob{mxhilb} & \prob{LQ} & \prob{CB3I} & \prob{CB3II} \\ \hline
1  & 472 (443) & 82  (53) & 83	(54) & 137 (108) & 149 (120) & 52 (23) & 199 (170) & 83  (54) \\ \hline
2  & 358 (329) & 83  (54) & 317	(288)& 229 (200) & 208 (179) & 63 (34) & 267 (238) & 518 (489) \\ \hline
3  & 193 (164) & 83  (54) & 172	(143)& 115 (86) & 175 (146) & 62 (33) & 377 (348) & 1,626 (133)  \\ \hline
4  & 194 (165) & 73  (44) & 62	(33) & 171 (142) & 72  (43) & 57 (28) & 223 (194) & 325 (296) \\ \hline
5  & 160 (131) & 72  (43) & 73	(44) & 504 (475) & 87  (58) & 53 (24) & 835 (806) & 487 (458) \\ \hline
6  & 396 (367) & 187 (127) & 78	(49) & 92  (63) & 152 (123) & 53 (24) & 127 (98) & 196 (18) \\ \hline
7  & 82  (53) & 82  (53) & 120	(91) & 135 (106) & 247 (218) & 62 (33) & 184 (155) & 603 (574) \\ \hline
8  & 62  (33) & 106 (15) & 58	(29) & 969 (940)  & 186 (157) & 52 (23) & 164 (135) & 67  (38) \\ \hline
9  & 107 (78) & 83  (54) & 68	(39) & 276 (247) & 103 (74) & 57 (28) & 399 (370) & 671 (642) \\ \hline
10 & 186 (13) & 78  (18) & 73	(44) & 352 (323) & 82  (53) & 73 (44) & 364 (335) & 205 (18) \\ \hline
11 & 181 (152) & 130 (65) & 62	(33) & 255 (226) & 53  (24) & 58 (29) & 414 (385) & 240 (211) \\ \hline
12 & 239 (210) & 115 (86) & 125	(96) & 403 (374) & 181 (152) & 47 (18) & 83  (54) & 462 (433) \\ \hline
13 & 295 (211) & 87  (18) & 72	(43) & 167 (138) & 151 (122) & 48 (19) & 128 (99) & 324 (33)   \\ \hline
14 & 354 (325) & 78  (49) & 72	(43) & 236 (207) & 82  (53) & 62 (33) & 200 (171) & 154 (125) \\ \hline
15 & 113 (13) & 73  (44) & 52	(14) & 512 (483) & 88  (59) & 53 (24) & 271 (242) & 87  (58) \\ \hline
16 & 227 (23) & 82  (53) & 120	(91) & 189 (160) & 93  (64)   & 68 (39) & 326 (297) & 63  (34) \\ \hline
17 & 102 (73) & 135 (106)& 270	(241)& 195 (166) & 57  (28) & 58 (29) & 180 (151) & 599 (570) \\ \hline
18 & 139 (110) & 268 (80) & 150	(121)& 173 (144) & 112 (83) & 58 (29) & 228 (199) & 112 (83) \\ \hline
19 & 169 (19) & 67  (38) & 72	(43) & 108 (79) & 117 (88) & 42 (13) & 204 (175) & 93  (64) \\ \hline
20 & 268 (239) & 301 (195)& 62	(13) & 465 (436) & 237 (208) & 53 (24) & 162 (133) & 180 (151) \\ \hline
\hline
$\floor{ \mathbf{\rm mean}}$& \textbf{214 (157)} & \textbf{113 (62)} & \textbf{108 (77)} & \textbf{284 (255)} & \textbf{131 (102)} & \textbf{56 (27)} & \textbf{266 (237)} & \textbf{281 (224)} \\ \hline
\end{tabular}
\end{table}

\FloatBarrier 
\section{Derivation of Tolerances}\label{app:tolerance} \setcounter{table}{0}
In this section, we show how one can compute sufficient values of $M_{\eta}$,
$\epsilon_{\lambda}$, and $M_{\lambda}$ for the MILP model \eqref{model:CPFull}.
We first show that if we choose these parameters incorrectly, then the
resulting MILP model no longer provides a valid lower bound.

\paragraph{Effect of an Insufficient $\epsilon_\lambda$ Value.}\label{epsilonFix}
A large $\epsilon_\lambda$ or small $M_\lambda$ (or both) could result in an incorrect value of
$z^{i_j}=1$ for a point $x \notin \conep{x^{i_j}-X^{\bi}}$, violating the implication of $z^{i_j}=1$ and
yielding an invalid lower bound on $f$. This is illustrated using a
one-dimensional example in \figref{epsilon}.
Similarly, one can encounter an invalid lower bound at an iteration if
$M_\lambda$ is chosen to be smaller than required.

\begin{figure}[htpb]
\captionsetup{font=footnotesize}
\center
\includegraphics[width=0.38\textwidth]{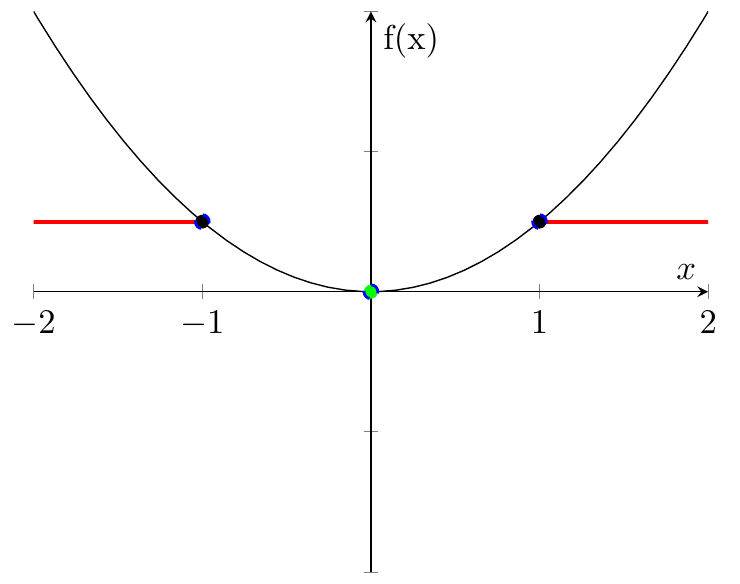}
\caption{An example of false termination of \algref{framework} using
\eqref{model:CPFull} when an insufficient value of $\epsilon_\lambda$ is used:
$f(x)=x^2$ and interpolation points $-1$ and $1$ are used to form the secant
shown in red colour. Any value of $\epsilon_\lambda > 0.5$ forces
$z^{{1}_1}=z^{{1}_2}=1$, activating the cut $\eta \ge 1$ at the optimal $x^*=0$,
resulting in $l_f^k=u_f^k=1$.\label{fig:epsilon}}
\end{figure}

\paragraph{Bound on $M_{\eta}$} Let $l_f$ be a valid lower bound of $f$ on $\Omega$.
With this lower bound, scalars $M_{\bi}$ can be defined as
\begin{equation}\label{eq:bigMi1}
M_{\bi}=\max\limits_{x \in \Omega}\{(c^{\bi})^T x +
b^{\bi}\} - l_f, \; \forall \bi \in W(X).
\end{equation}
If $\Omega= \left\{ x: l_x \le x \le u_x \right\}$, where $l_x, u_x \in \R^n$
are known, then we can set
\begin{equation}\label{eq:bigMi2}
M_{\bi} = \sum_{h:c^{\bi}_j<0}c^{\bi}_h l_x +
\sum_{h:c^{\bi}_j \ge 0}c^{\bi}_h u_x + b^{\bi} - l_f, \quad h = 1,
\ldots, n, \; \forall \bi \in W(X).
\end{equation}
Then, we can either set individual values of $M_{\bi}$ within each constraint
\eqref{eq:cuts}, which would yield a tighter model, or use a single parameter,
\begin{equation}\label{eq:bigMi3}
M_{\eta}=\max\limits_{\bi} \{M_{\bi}\},
\end{equation}
as shown in \eqref{model:CPFull}.

\paragraph{Bounds on $\epsilon_\lambda$ and $M_\lambda$}
Sufficient values of $M_\lambda$ and $\epsilon_\lambda$ are not easy
to calculate as they depend on the rays generated at $x^{i_j}$,
and the domain $\Omega$. We show next, that bounds on these values can be
computed by solving a set of optimization problems.
A sufficient value for $\epsilon_\lambda$ can be computed based on the
representations of the $n+1$ hyperplanes formed using different combinations of
$n$ points, $\forall \bi \in W(X)$. We can use one of the representations of
the hyperplane passing through points $X^{i}\setminus \{x^{i_j}\}, j=1\ldots n+1$
to obtain bounds on $\epsilon_\lambda$, by solving an optimization problem for
each $i_j \in \bi$. Let $c^{\biminusj}$ and $b^{\biminusj}$ denote the
solution of the following problem.
\begin{equation} \label{model:hyper} \tag{\text{P$-hyp$}}
\begin{aligned}
\dps \mini_{c, b} & \left\| c \right\|_1\\
\st  & \dps (c)^T x^{i_l} + b = 0, \quad l = 1, \ldots, n+1,\; l \neq j \\
     & \dps \left\| c \right\|_1 \ge 1 \\
     & \dps c \in \Z^n, b \in \Z .
\end{aligned}
\end{equation}
Problem \eqref{model:hyper} can be easily cast as an integer program by
replacing $\left\|c \right\|_1$ by $e^Ty$, where $e=(1,\ldots,1)^T$ is the
vector or all ones, and the variables $y$ satisfy the constraints $y \geq c, y
\geq - c$. Because, the hyperplane is generated using $n$ integer points, it
can be shown that there exists a solution $c^{\biminusj}$ and $b^{\biminusj}$
that is nonzero and integral, as stated in the following proposition.

\begin{proposition}\label{prop:distance}
Consider a hyperplane $S\defined\{x:c^{T} x + b = 0\}$ such that $c$ and $b$ are
integral. The Euclidean distance between an arbitrary point $\hat{x} \in \Z^n
\setminus S$, and $S$, is greater than or equal to $\frac{1}{\lvert \lvert c
\rvert \rvert_2}$.
\end{proposition}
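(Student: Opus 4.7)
The plan is to invoke the standard point-to-hyperplane distance formula and then exploit integrality of the defining data to lower-bound the numerator. Specifically, for any hyperplane $S = \{x : c^\T x + b = 0\}$ with $c \neq 0$, the Euclidean distance from an arbitrary point $\hat{x}$ to $S$ equals
\[
\mathrm{dist}(\hat{x}, S) \;=\; \frac{|c^\T \hat{x} + b|}{\|c\|_2}.
\]
This is a textbook fact that I would simply cite, obtained for instance by minimizing $\|x - \hat{x}\|_2$ subject to $c^\T x + b = 0$ via Lagrange multipliers.

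The key observation I would then make is that under the hypotheses of the proposition, the numerator is a nonzero integer. Indeed, because $c \in \Z^n$, $b \in \Z$, and $\hat{x} \in \Z^n$, the scalar $c^\T \hat{x} + b$ is an integer linear combination of integers, hence itself an integer. Since $\hat{x} \notin S$, this integer is not zero, so $|c^\T \hat{x} + b| \geq 1$.

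Combining these two facts yields
\[
\mathrm{dist}(\hat{x}, S) \;=\; \frac{|c^\T \hat{x} + b|}{\|c\|_2} \;\geq\; \frac{1}{\|c\|_2},
\]
which is the desired bound. There is essentially no obstacle here: the result is a one-line consequence of the distance formula plus the observation that nonzero integers have absolute value at least one. The only minor care needed is to note that $c \neq 0$ is implicit (otherwise $S$ would not be a hyperplane), so the division by $\|c\|_2$ is well defined.
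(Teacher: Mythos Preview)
Your proof is correct and follows essentially the same approach as the paper: both invoke the standard point-to-hyperplane distance formula $\frac{|c^\T \hat{x} + b|}{\|c\|_2}$ and then use integrality of $c$, $b$, $\hat{x}$ together with $\hat{x}\notin S$ to conclude $|c^\T \hat{x} + b|\geq 1$. Your version is slightly more explicit in noting that $c\neq 0$ is implicit, but the argument is identical.
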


\begin{proof}
  The Euclidean distance between a point $\hat{x}$ and $S$ is the
  2-norm of the projection of the line segment joining an arbitrary point $w \in
  S$ and $\hat{x}$, on the normal passing through $\hat{x}$, and can be expressed as
  \begin{equation}
    \frac{\lvert c^{T} \hat{x} + b \rvert}{\lvert \lvert c \rvert \rvert_2} .
  \end{equation}
  By integrality of $c, b$ and $\hat{x}$, and since $\hat{x} \notin S$, $\lvert c^{T} \hat{x} + b
  \rvert \geq 1$, and the result follows.
  \qed
\end{proof}
\propref{distance} can be used directly to get a sufficient value of
$\epsilon_\lambda$ as follows.
\begin{equation}\label{eq:suffEps0}
\epsilon_\lambda = \min\limits_{\bi \in W(X),\; i_j \in \bi}{\; \frac{1}{\lvert
\lvert c^{\biminusj} \rvert \rvert_2}} .
\end{equation}
The bound can be tightened using the following optimization problem.
\begin{equation} \tag{P-\text{$\epsilon$}} \label{model:eps}
\begin{aligned}
\dps\mini_{x} & \frac{\lvert (c^{\biminusj})^T x + b^{\biminusj} \rvert}{\lvert
\lvert c^{\biminusj} \rvert \rvert_2} \\
\st  & \dps \lvert(c^{\biminusj})^T x^{i_l} + b^{\biminusj}\rvert \geq 1 \\
     & \dps x \in \Omega.\\
\end{aligned}
\end{equation}
If we denote by $\epsilon^{\biminusj}_\lambda$, the optimal value of \eqref{model:eps},
then the following is a sufficient value of $\epsilon_\lambda$.
\begin{equation}\label{eq:suffEps}
\epsilon_\lambda = \min\limits_{\bi \in W(X),\; i_j \in \bi}{\; \epsilon_\lambda^{\biminusj}}.
\end{equation}
Similarly, if we maximize the objective in \eqref{model:eps}, and denote the optimal
value by $M^{\biminusj}_\lambda$ we get a sufficient value for $M_\lambda$ as follows.
\begin{equation}\label{eq:suffbigM}
M_\lambda = \max\limits_{\bi \in W(X),\; i_j \in \bi}{\; M_\lambda^{\biminusj}}.
\end{equation}

\paragraph{No-Good Cuts and Stronger Objective Bounds.} For our initial
experiments, we tried setting arbitrarily small values for
$\epsilon_{\lambda}$ instead of obtaining the best possible values by solving
a set of optimization problems as elaborated above.
The problem with having such values of $\epsilon_{\lambda}>0$ too small is
that it allows us to ignore the cuts at the interpolation points, $x^{(k)}$,
causing a violation of the bound, $\eta \geq f^{(k)}$, for $x=x^{(k)}$ in the
MILP, resulting in repetition of iterates in the algorithm. We fixed this
by adding the following valid inequality to the MILP.
\[ \eta \geq f^{(k)} - M \| x^{(k)} - x \|_1 .\]
We can write this inequality as a linear constraint, by introducing a binary
representation of the variables, $x$, requiring $n U$ binary variables,
$\xi_{ij}, \; i=1,\ldots,n, \; j=1,\ldots,U$, where $n$ is the dimension of
our problem, and $0 \leq x \leq U$. With this
representation, we obtain \[ x_i = \sum_{j=1}^U i \xi_{ij}, \quad 1=
\sum_{j=1}^U \xi_{ij}, \quad \xi_{ij} \in \{0,1\}, \] and write the
$\eta$-constraint equivalently as
\[ \eta \geq f^{(k)} - M \sum_{i=1}^n \left(  \sum_{j: \xi_{ij}^{(k)}=0} \xi_{ij}
				   + \sum_{j: \xi_{ij}^{(k)}=1} \left( 1 - \xi_{ij} \right) \right) .
\]

 
\clearpage

\bibliographystyle{spmpsci}
\bibliography{../../bibs/merged}

\vfill
\begin{flushright}
\scriptsize
\framebox{\parbox{\textwidth}{
The submitted manuscript has been created by UChicago Argonne, LLC, Operator of Argonne National Laboratory (“Argonne”). 
Argonne, a U.S. Department of Energy Office of Science laboratory, is operated under Contract No. DE-AC02-06CH11357. 
The U.S. Government retains for itself, and others acting on its behalf, a paid-up nonexclusive, irrevocable worldwide 
license in said article to reproduce, prepare derivative works, distribute copies to the public, and perform publicly 
and display publicly, by or on behalf of the Government.  The Department of Energy will provide public access to these 
results of federally sponsored research in accordance with the DOE Public Access Plan. 
\url{http://energy.gov/downloads/doe-public-access-plan}
}}
\normalsize
\end{flushright}

\end{document}